\font\cyrfam=wncyr10 scaled 1000
\def\cyr#1{\mbox{\cyrfam\relax#1}}
\newcommand{\details}[1]{}
\newcommand{\longuparrow}{\mathrel{\rotatebox[origin=c]{90}{$\longrightarrow$}}}
\newcommand{\longdownarrow}{\mathrel{\rotatebox[origin=c]{-90}{$\longrightarrow$}}}
\newtheorem{theorem}{Theorem}[section]
\newtheorem*{theorem*}{Theorem}
\newtheorem{corollary}[theorem]{Corollary}
\newtheorem*{corollary*}{Corollary}
\newtheorem{lemma}[theorem]{Lemma}
\newtheorem*{lemma*}{Lemma}
\newtheorem*{claim*}{Claim}
\newtheorem{proposition}[theorem]{Proposition}
\newtheorem*{proposition*}{Proposition}
\newtheorem*{conjecture*}{Conjecture}
\newtheorem{def-proposition}[theorem]{Definition-Proposition}
\theoremstyle{definition}
\newtheorem{definition}[theorem]{Definition}
\newtheorem*{definition*}{Definition}
\newtheorem{remark}[theorem]{Remark}
\newtheorem{notation}[theorem]{Notation}
\newtheorem{example}[theorem]{Example}
\newtheorem*{example*}{Example}
\numberwithin{equation}{section}
\newcommand{\bS}{\mathbf{S}}
\newcommand{\cD}{\mathcal{D}}
\newcommand{\ZZ}{\mathbb{Z}}
\newcommand{\QQ}{\mathbb{Q}}
\newcommand{\RR}{\mathbb{R}}
\newcommand{\CC}{\mathbb{C}}
\newcommand{\GG}{\mathbb{G}}
\newcommand{\LL}{\mathbb{L}}
\newcommand{\End}{\mathrm{End}}
\newcommand{\Hom}{\mathrm{Hom}}
\newcommand{\Aut}{\mathrm{Aut}}
\newcommand{\UR}{\mathrm{UR}}
\newcommand{\W}{\mathrm{W}}
\newcommand{\T}{\mathrm{T}}
\newcommand{\HH}{\mathrm{H}}
\newcommand{\Lie}{\mathrm{Lie}\,}
\newcommand{\dR}{\mathrm{dR}}
\newcommand{\Galmot}{{\mathcal{G}}{\mathrm{al}}_{\mathrm{mot}}}
\newcommand{\MT}{\mathrm{MT}}
\newcommand{\tlog}{\widetilde{\log}}
\newcommand{\tR}{\widetilde{R}}
\newcommand{\tors}{\mathrm{Tors}}
\newcommand{\oK}{\overline K}
\newcommand{\oQQ}{\overline{\QQ}}
\newcommand{\gal}{{\mathrm{Gal}}(\oK/K)}
\newcommand{\sym}{{\mathrm{sym}}}
\newcommand{\cP}{\mathcal{P}}
\newcommand{\cE}{\mathcal{E}}
\begin{document}

\title[Weil pairing]
{Mumford-Tate groups of 1-motives and Weil pairing}

\author{Cristiana Bertolin}
\address{Dipartimento di Matematica, Universit\`a di Padova, Via Trieste 63, Padova, Italy}
\email{cristiana.bertolin@unipd.it}

\author{Patrice Philippon}
\address{\'Equipe de Th\'eorie des Nombres, Institut de Math\'ematiques de Jussieu-Paris Rive Gauche, UMR CNRS 7586, Paris, France}
\email{patrice.philippon@upmc.fr}

\subjclass[2010]{14F35,14F20,11G99}

\keywords{abelian variety, motivic Weil pairing, biextensions, 1-motives, motivic Galois group, unipotent radical, Mumford-Tate group}




\begin{abstract}
	We show how the geometry of a 1-motive $M$ (that is existence of endomorphisms and relations between the points defining it) determines the dimension of its motivic Galois group $\Galmot(M)$.
	 Fixing periods matrices $\Pi_M$ and $\Pi_{M^*}$ associated respectively to a 1-motive $M$ and to its Cartier dual $M^*,$ we describe the action of the Mumford-Tate group of $M$ on these matrices. In the semi-elliptic case, according to the geometry of $M$ we classify polynomial relations between the periods of $M$ and we compute exhaustively the matrices representing the Mumford-Tate group of $M$. This representation brings new light on Grothendieck periods conjecture in the case of 1-motives.
\end{abstract}


\maketitle



\section*{Introduction}
 
 Let $K$ be a subfield of $\CC$ and denote by $\oK$ its algebraic closure in $\CC$.
Consider a 1-motive $M$ defined over $K$, that  is a length one complex
 $M=[u: X\rightarrow G]$, where $X$ is the character group of an $n$-dimensional torus,
 $G$ is an extension of an abelian variety $A$ by an $s$-dimensional torus which is parametrized by $s$ points of the Cartier dual of $A$
 \begin{equation}\label{eq:Q}
 Q_1,\dots,Q_s \in A^*  (\oK) 
 \end{equation}
  (all tori, $G$ and $A$ are defined over $K$), and  $u: \ZZ^n \to G (\oK)$ is a $\mathrm{Gal}(\oK/K)$-equivariant group morphism defined by $n$ points 
 \begin{equation}\label{eq:R}
 	 R_1,\dots,R_n \in G  (\oK) 
 	\end{equation}
 living respectively above $n$ points 
  \begin{equation}\label{eq:P}
  	 P_1,\dots,P_n \in A  (\oK) .
 \end{equation}

Denote by $B$ the abelian sub-variety of $A^n \times A^{*s}$ which is generated by the point $(P_1, \dots, P_n,Q_1,\dots \\, Q_s)$ modulo isogenies. Let $Z(1)$ be the smallest sub-torus of $\GG_m^{ns}$ (after extension of the base field $K$ to its algebraic closure if necessary) which contains the image of the Lie bracket $[\cdot,\cdot]: B \otimes B \to \GG_m^{ns}$ constructed in \eqref{[--]} using the motivic Weil pairing of $A$ (see \cite[\S 1.3]{B03})
$$\mathcal{W}_A^{\;\cP} : A \otimes A^* \longrightarrow \GG_m, $$ and the point $ \pi (pr_*\tR)$ constructed in Subsection \ref{subsection:BZ} (see in particular \eqref{R'} and \eqref{piR'}).

Let $ \Galmot (M)$ be the motivic Galois group of $M$. Because of the weight filtration, $ \Galmot (M)$ fits into 
the following exact sequence 
\begin{equation}\label{eq:shortexactsequenceUR}
	 0 \longrightarrow \UR(M) \longrightarrow \Galmot (M) \longrightarrow \Galmot (A) \longrightarrow 0
\end{equation} 
where $\UR(M)$ is its unipotent radical and $\Galmot (A)$ is the motivic Galois group of $A$, which is its largest reductive quotient (see for example \cite[\S 3.1]{BPSS}).
By \cite[Théorème 0.1]{B03} 
the Lie algebra of $\UR(M)$ is the semi-abelian variety
\begin{equation}\label{eq:ses}
	0 \longrightarrow Z(1) \longrightarrow  \Lie \UR (M)  \longrightarrow B \longrightarrow 0
\end{equation} 
defined by the adjoint action of the Lie algebra $(B,Z(1),[\cdot,\cdot])$ on $B+Z(1)$. In \cite[\S 4]{J} Jossen gives a more elementary construction of the semi-abelian variety $\Lie \UR(M).$

Our first aim is to understand how the geometry of $M$ (existence of endomorphisms and relations between the points \eqref{eq:Q}, \eqref{eq:P} and \eqref{eq:R}) governs the dimension of $\UR(M)$, assuming $K=\oK$.
Because of the short exact sequence \eqref{eq:ses}, the study of $\UR(M)$ reduces to the study of the group varieties $B$ and $Z(1)$. 

For example, if the dimension of $B\subseteq A^n \times A^{*s} $ is maximal, that is $(n+s) g,$ 
where $g$ is the dimension of the abelian variety $A,$
by Corollary \ref{cor:maxdim} the dimension of the unipotent radical $\UR(M)$ is maximal too and equal to
\[ \dim_\QQ \UR(M) = 2 (n+s) g + ns . \]
The dimension of $B$ decreases when there are relations between $ P_1,\dots,P_n, Q_1,\dots,Q_s$ defined in \eqref{eq:P} and \eqref{eq:Q}. These relations are given by endomorphisms of $A$, endomorphisms of $A^*$ and
group morphisms between $A$ and $A^*$. If the abelian variety $A$ is simple, as in Example \ref{ex:dimA}, we compute the dimension of $B$ using the abelian logarithms of the points $P_i$ and $Q_j$. One can deal with the general case using \cite[Lemma 4.3]{BPSS}.

The maximal dimension of $Z(1) \subseteq \GG_m^{ns}$ is clearly $ns.$ The torus $Z(1)$ splits in two parts: 
\[Z(1) = Z'(1) \times (Z/Z') (1),\]
where $Z'(1)$ is the smallest sub-torus of $\GG_m^{ns}$ containing the image of the Lie bracket $[\cdot,\cdot]: B \otimes B \to \GG_m^{ns}$, and  $(Z/Z')(1)$ is
the smallest sub-torus of $\GG_m^{ns} /Z'(1) $ containing the point $ \pi (pr_*\tR)$ constructed in \eqref{piR'}.
\begin{itemize}
	\item The dimension of $Z'(1)$ is governed by the abelian variety $B$ since, as we will show in Lemma \ref{lemma:pr*i*d*B-trivial}, $Z'(1)$ contains 
	the values of the factors of automorphy of the torsor $i^*d^*\mathcal{B} $ over $B$ which is defined in Subsection \ref{subsection:BZ}.
	In Theorem \ref{teo:dimZ'(1)} we compute explicitly the dimension of $Z'(1)$ in terms of the isogenies $\beta_{i,k}: B \to B^*$ (for $i=1, \dots, n$ and $k=1, \dots, s$) constructed in \eqref{eq:beta} using the inclusion of $B$ into $A^n \times A^{*s}$.

	\item The dimension of the torus $(Z/Z')(1)$ involves the point $ \pi (pr_*\tR)$ constructed in \eqref{piR'}. More precisely, in Theorem \ref{teo:dimZ/Z'} we prove that the dimension of $(Z/Z')(1)$ is equal to the dimension of the $\QQ\,$-sub-vector space of $\CC / 2 i \pi \QQ$ generated by the logarithms $\sum_{i,k}\alpha_{i,k}\log(t_{i,k})$, with $\sum_{i,k}\alpha_{i,k} x_i\otimes y_k^\vee$ running over ${Z'}^\perp$ as in \eqref{pointpsi} and $(t_{i,k})$ any point projecting onto $ \pi ( pr_*\tR )$ via $\GG_m^{ns} \to \GG_m^{ns} /Z'(1)$.
\end{itemize}

 For a 1-motive $M$ defined over $K$ and for its Cartier dual $M^*$,  we compute explicitly their periods matrices $\Pi_M$ and $\Pi_{M^*}$ (see Subsection \ref{periodsonemotives}). The entries of these matrices consist of abelian integrals of first, second and third kind computed along closed and open paths. In Lemma \ref{computperiodes} we state relations between these abelian integrals. This allows us to generalize to 1-motives the classical relation between the periods matrices of $A$ and of its Cartier dual $A^*$, namely
 $\Pi_{M^*} = (2\mathrm i\pi\Pi_M^{-1})^t$ (see Corollary \ref{cor:lien}).

 Denote by $\MT(M)$ the Mumford-Tate group of a 1-motive $M$.
According to \cite[\S 6]{J}, for $K=\CC$  the unipotent radical of the motivic Galois group of $M$ coincides with the unipotent radical of $\MT(M)$. More generally, 
  by \cite[Theorem 1.2.1]{A19} and assuming $K=\oK \subseteq \CC$ the image $ \T_\HH (\Galmot (M))$ of $\Galmot (M)$ via the fibre functor $ \T_\HH$ ``Hodge realization" coincides with $\MT(M)$.
The periods matrix $\Pi_M$ of $M$ is a complex point of the $\MT(M) \otimes K$-torsor of periods $\underline{\mathrm{Isom}}^\otimes(  \T_\dR, \T_\HH ).$ In \eqref{actionGalmot} we describe the action of $\MT(M)$ on this point $\Pi_M$.
 In Section \ref{lastSection} we study the representation as elements of $\mathrm{Gl}_{n+2g+s}(\mathbb Q)$ of the unipotent radical of the Mumford-Tate group of $M$ (see Proposition \ref{propGRUMTM}). See \cite[Theorem 4.9.2]{Murty} for another explicit characterization of this unipotent radical. 

 For 1-motives defined over $\oQQ$ and of the type $M=[u:\ZZ \to G],$ with $G$ an extension of an elliptic curve $\cE$ by $\GG_m$, in Section \ref{subsubsection4} we list the polynomial relations between the periods of $M$, which are dictated by its geometry. Using Grothendieck periods conjecture,
 we get a parametrization of the matrices of $\mathrm{Gl}_{4}(\mathbb Q)$ which represent the Mumford-Tate group of $M$. This last section extends Bertrand's work
 \cite[\S 2]{Br08}.

The approach of this paper enables to compute the dimension of the unipotent radical of the motivic Galois group of a 1-motive algorithmically: see \S \ref{subsection:B} and in particular Example \ref{ex:dimA}, Theorem \ref{teo:dimZ'(1)} and Theorem \ref{teo:dimZ/Z'}. Recall that
the dimension of $\Galmot(M)$ is a lower bound for the transcendence degree of the field $K(\Pi_M)$ generated by periods of $M$ as predicted by the Grothendieck periods conjecture (generalized if $K$ is not algebraic) (\cite[Appendix 2]{B19}, \cite{B02}, or \cite{BPSS})).
Hence the explicit description of the action of the Mumford-Tate group of a 1-motive on its periods matrix $\Pi_M$, that we have computed in Section \ref{lastSection}, is useful if one wants to attack the Grothendieck periods conjecture (generalized or not) by a classical transcendence approach.

\section*{Acknowledgements}
We are very grateful to Pierre Deligne for his clarifications on the Weil pairing of an abelian variety.
We thank equally Daniel Bertrand for his help at various stages of this paper. Several of our results extend his own works.
We are very thankful to the referee for his careful reading and his comments which allowed to greatly improve our text.

The first author is supported by the project funded by the European Union – Next Generation EU under the National Recovery and Resilience Plan (NRRP), Mission 4 Component 2 Investment 1.1 - Call PRIN 2022 No. 104 of February 2, 2022 of Italian Ministry of University and Research; Project 20222B24AY (subject area: PE - Physical Sciences and Engineering) ``The arithmetic of motives and L-functions".

\section{Notations}\label{notation}

In this paper $K$ is a subfield of the field of complex numbers $\CC$ and $\oK$ is its algebraic closure in $\CC$. The equality $K=\oK$ means that the field $K$ is algebraically closed in $\CC$.

\subsection{Abelian varieties}\label{abelianvariety}

Let $A$ be a $g$-dimensional abelian variety defined over $K \subseteq \CC$. We identify $A$ and its Cartier dual $A^*$ with the complex torii $\CC^g/\Lambda$ and $\Hom_{\overline \CC}(\CC^g,\CC)/\Lambda^*$ respectively, where $\Lambda$ and $\Lambda^*$ are dual lattices for the duality product
\begin{equation}\label{dualityproduct}
	\langle z,z^*\rangle : = \mathrm{Im}\big(z^*(z)\big)
\end{equation}
with $z^*(z) = \sum_{i=1}^g \overline{z}_iz_i^*$ in coordinates for dual bases and $\Hom_{\overline \CC}(\CC^g,\CC)$ is the $\CC$-vector space of $\CC$-antilinear homomorphisms from $\CC^g$ to $\CC$. Recall that 
\[\Lambda^*=\{\lambda^*\in \Hom_{\overline \CC}(\CC^g,\CC);\mathrm{Im}(\lambda^*(\Lambda))\subset\mathbb Z\}.\]
 If we identify the vector space  $ \Hom_{\overline \CC}(\Hom_{\overline \CC}(\CC^g,\CC),\CC)$ modulo $\Lambda^{**}$ with $\CC^g$ modulo $\Lambda,$ the duality product on $A^* \times A^{**}$ is $z(z^*) = \overline{z^*(z)}$ and so $	\langle z,z^*\rangle = - 	\langle z^* , z\rangle$ (see \cite[\S 2.4, page 34]{BL}).

Up to isogenies we may assume that $A$ has a principal polarization $H: \CC^g \times \CC^g \to \CC$. According to \cite[Lemma 2.4.5]{BL} there exists a map
\begin{align}\label{isoPhi_H}
	\phi_H : \CC^g & \longrightarrow  \Hom_{\overline \CC}(\CC^g,\CC),\\
	\nonumber  z & \longmapsto H(z, \cdot)
\end{align}
which is an isomorphism since the polarization is principal. In particular, for $p,q \in \CC^g$ one has
\begin{equation} \label{eq:ImH}
	\langle p, \phi_H(q) \rangle = \mathrm{Im} \big(\phi_H(q)(p) \big)= \mathrm{Im} \big( H(q,p)\big) .
\end{equation}
Occasionally, we identify via the isomorphism $\Phi_H$ in \eqref{isoPhi_H}, $\Hom_{\overline \CC}(\CC^g,\CC)/\Lambda^*$ with   $\mathbb C^g/ \phi_H^{-1}(\Lambda^*).$

Let $\cP$ be the Poincaré biextension of $(A,A^*)$ by $\GG_m$. According to \cite[\S 2.5, page 38 and Appendix B, page 571]{BL}, the sections of $\cP|_{A \times \{Q\}}$ can be viewed as the sections of $\exp_A^*\cP|_{A \times \{Q\}}$ which are invariant under the action of $\Lambda$ through the factor of automorphy $e^{\pi{z^*(\lambda)}}$, where $\exp_{A^*}(z^*)=Q$ and $\lambda\in\Lambda$.  Fix a divisor $D$ not containing the origin and associated to the line bundle with first Chern class $H$ and trivial semi-character.
Let  $\theta(z)$ be a theta function with divisor $D$ such that $\theta (0)=1$ and $d\log\theta(0)=0$. For $q\in\CC^g$ such that $\exp_{A^*}(\phi_H(q))=Q$, the meromorphic function $\Theta_q(z):=\frac{\theta(z+q)}{\theta(z)\theta(q)}$ is a theta function (in $z$) for the canonical factor $e^{\pi H(q,\lambda)}=e^{\pi\phi_H(q)(\lambda)}$, see \cite[\S 2.3]{BL}. It thus induces a meromorphic (that is rational) section of $\cP|_{A\times\{Q\}}$, however depending on the choice of $q$. But the ratio $\frac{\Theta_q(z_1+z_2)}{\Theta_q(z_1)\Theta_q(z_2)}$ is a rational function on $A\times A$ not depending on the choice of the logarithm $q$ of $Q$. Hence the latter is a system of factors defining a group law on $\cP|_{A\times\{Q\}}$. Let $G$ be the extension of $A$ by $\GG_m$ parametrized by the point $Q \in A^*$. In order to write the exponential map of $G$ we introduce the function
\begin{equation}\label{Fq(z)}
	F_q(z):=\Theta_q(z)e^{- \sum_{i=1}^g h_i(q) z_i},
\end{equation}
which is periodic in $q$ with respect to $\Lambda$, see \cite[pages 40-41]{Br83} and \cite[2.3]{BPSS}. This function replaces the function introduced by Serre in the elliptic case, see \cite[(1.6)]{B19}. 
It defines a rational section of the line bundle underlying $G$ and the same system of factors on $A\times A$ as $\Theta_q$. The exponential factor $e^{-\sum_{i=1}^g h_i(q) z_i}$ insures that the differential of the exponential map $\exp_G$ at the origin, is the identity. In summary the exponential maps of $G$ and $\cP|_{A \times \{Q\}}$ are
\begin{align}
	\label{expG}	\exp_G(z,t) & =\big(\exp_A(z), e^{t}F_{q}(z)\big)\\
	\nonumber	\exp_{\cP|_{A \times \{Q\}}}(z,t) &=  \big(\exp_A(z), e^{t} \Theta_q(z)\big)
\end{align}
respectively and so we have the isomorphism
\begin{equation}\label{isoGP}
	\begin{matrix}\hfill G &\to &\cP|_{A \times \{Q\}}\hfill\\[2mm]
	\big(\exp_A(z), e^{t}F_{q}(z)\big)\mod\Lambda &\mapsto & \big(\exp_A(z), e^{t}F_{q}(z)e^{\sum_{i=1}^g h_i(q) z_i}\big)\mod\Lambda.\end{matrix}
\end{equation}
In particular, the fibre $\cP_{P,Q}$ of the Poincaré biextension $\cP$ above the point $(P,Q)$ corresponds to the fibre $G_P$ of the extension $G$ above the point $P$.

\subsection{1-motives}\label{1-motives}

A 1-motive $M=[u:X \rightarrow G]$ over $K \subseteq \CC$ consists of
\begin{itemize}
\item  a character group $X$ of a $n$-dimensional torus defined over $K$, that is a finitely generated torsion free $\gal$-module,
\item	 an extension $G$ of an abelian variety by a torus, which is defined over $K$, and
\item	 a morphism $u:X \to G$ of group varieties over $K$. 
	 \end{itemize}
 Since any morphism of $K$-varieties can be seen as a $\gal$-equivariant morphism  
 of the corresponding $\oK$-varieties, having the group morphism $u:X \to G$ is equivalent to having a $\gal$-equivariant group morphism 
 \[u: \ZZ^n \longrightarrow G(\oK).\]
As in \cite{BPSS,B03,B19,BG2,B11,BB,BG}, we denote $Y(1)$ the torus whose character group is $Y^\vee$ and co-character group is $Y$.

There is a more symmetrical definition of 1-motives: having the 1-motive $M=[u:X \rightarrow G]$ is equivalent to having the 7-tuple $(X, Y^\vee,A ,A^*, v ,v^*,\psi)$ where:
\begin{itemize}
	\item $X$ is the character group of a $n$-dimensional torus $X^\vee (1)$;
	\item $Y^\vee$ is the character group of a $s$-dimensional torus $Y(1)$;
	\item $A^*$ is the Cartier dual of the abelian variety $A$;
	\item $v: X \rightarrow A$ and $v^*:Y^\vee \rightarrow A^*$ are two group morphisms of $K$-group varieties, that is two  $\gal$-equivariant group morphisms $ v: \ZZ^n \rightarrow A(\oK)$ and $v^*: \ZZ^s \rightarrow A^*(\oK)$;
	\item $\psi$ is a trivialization (= biadditive section) of the pull-back $(v \times v^*)^*\mathcal{P}$ via $v \times v^*$ of the Poincar\'e biextension $\mathcal{P}$ of $(A,A^*)$ by $\GG_m$.
\end{itemize}

If we extend the scalars to $\oK$ the tori become split: ${X^\vee} (1) (\oK) \cong \GG_m^n(\oK) $ and ${Y}(1)(\oK)\cong \GG_m^s(\oK).$
Denote by $x_1, \dots, x_{n}$ (\textit{resp}. $y^\vee_1, \dots , y^\vee_{s}$) the basis of the character group $X$ (\textit{resp.} $Y^\vee$) giving the identification of $X$ with $\ZZ^n$ (\textit{resp}. of $Y^\vee$ with  $\ZZ^s$) over $\oK$ and set
\[v(x_i)=P_i \in A(\oK), \qquad v^*(y^\vee_k) =Q_k \in A^*(\oK)\]
for $ i=1, \dots,n $ and $ k=1, \dots,s. $ Having the group morphism $v$ is equivalent to having the $n$ points $P_1, \ldots, P_n\in A(\oK)$, whereas having the group morphism $v^*$ is equivalent to having the $s$ points $Q_1, \ldots, Q_s\in A^*(\oK)$ which parametrize the extension $G$ of $A$ by $Y(1)$.

 For $k=1, \dots,s,$ denote by $G_{Q_k}$
the push-down of the extension $G$ via the character $y^\vee_k : Y(1) \to \GG_m$: it is an extension of $A$ by $\GG_m$ parametrized by the point $Q_k \in A^*$. By \cite[Chap. VII, Proposition 1]{Se84}
the extension $G$ is isomorphic to $G_{Q_1}\times_A\dots\times_A G_{Q_s}$.
 Therefore for $i=1, \dots , n$ having the point $ u(x_i)=R_i $ in the fibre $G_{P_i}(\oK)$ of $G$ above the point $v(x_i)=P_i \in A(\oK)$ is equivalent to having the $s$ points $R_{i,1}, \dots , R_{i,s}$ in the fibres $(G_{Q_1})_{P_i}(\oK) , \dots , (G_{Q_s})_{P_i}(\oK)$ respectively. Via the isomorphism \eqref{isoGP}, we have that
\[R_{i,k} = \psi(x_i,y^\vee_k) \in \cP_{P_i,Q_k} \simeq (G_{Q_k})_{P_i} \]
for $k=1, \dots,s$.  We can conclude that
having the trivialization $\psi$ is equivalent to having the $n$ points $R_1, \ldots, R_n \in G(\oK)$, whose images via the projection $G \twoheadrightarrow A$ are the $n$ points $P_1, \ldots, P_n$ respectively, i.e. it is equivalent to having the group morphism $u:X \rightarrow G$. 

 \textit{The Cartier dual of $M$} is the 1-motive $M^*= [u^*:Y^\vee \rightarrow G^*]=( Y^\vee,X,A^*,A, v^* ,v,\psi),$ where
\begin{itemize}
	\item $G^*$ is the extension of $A^*$ by  $X^\vee(1)$ parametrized by the $n$ points $P_1,\dots,P_n $ of $ A(\oK)$ given by the $\gal$-equivariant group morphism $v: \ZZ^n \to A(\oK),$ and 
	\item  $u^*:\ZZ^s \rightarrow G^*(\oK)$ is the $\gal$-equivariant group morphism given by the $s$ points $ R_1^*,\dots,R_s^* \in G^*(\oK)$
	defined by the trivialization $\psi$ of the pull-back $(v \times v^*)^*\mathcal{P}$ via $(v \times v^*)$ of the Poincar\'e biextension $\mathcal{P}$ of $(A,A^*)$ by $\GG_m$ (see \cite[(10.2.12.2)]{D74}).
\end{itemize}

\section{The motivic Weil pairing of an abelian variety}\label{geometricaldescriptionW_A}

\begin{definition} Let $A$ be an abelian variety defined over $K   \subseteq \CC$.
	The isomorphism class of the Poincaré biextension $\cP$ of $(A,A^*)$ by $\GG_m$ is the motivic Weil pairing $ \mathcal{W}_A^{\;\cP} :A \otimes A^*  \rightarrow \GG_m$  of $A$ in a suitable tannakian category of motives.
\end{definition}

\begin{remark}
	Beware that until now nobody has proved that $A\otimes A^*$ is representable by a group variety and in particular one cannot consider the value of this arrow on points of  $A \otimes A^*$.
\end{remark}

In this paper we use the 
homological interpretation and the Hodge realization of this motivic Weil pairing of $A$, that we investigate further in the following Subsections.
See \cite[Construction (10.2.3),(10.2.5),(10.2.7)]{D74} or \cite{BM09,B12,B13,BT14,BT18} for the definition of the pairings 
$ {\T}_*(A)\otimes {\T}_*(A^*)  \rightarrow  {\T}_*(\GG_m)$
in each cohomology theory $*= \HH, \dR,\ell$ and recall that the $\ell$-adic realization $ {\T}_\ell(A)\otimes {\T}_\ell(A^*)  \rightarrow  {\T}_\ell(\GG_m)$ is just the classical $\ell$-adic Weil pairing of $A$.

\medskip

\begin{lemma}\label{propieteW_A}
The motivic Weil pairing $	\mathcal{W}_A^{\;\cP}:A  \otimes A^*  \rightarrow \GG_m$ has the following properties:
	\begin{enumerate}
		\item bilinearity, 
		\item $\mathcal{W}_{A^*}^{\;\cP^*}= (\mathcal{W}_{A}^{\;\cP})^{-1} \circ \sym$, where $\sym: A^*  \otimes A \to A  \otimes A^*$ is the arrow which exchange the factors,
		\item non-degenerate,
		\item any group morphism $f: B \to A$ of abelian schemes 
		and its transpose $f^t: A^* \to B^*$ are adjoint with respect to the motivic Weil pairings, that is $ \mathcal{W}_A^{\;\cP_A} \circ (f \otimes id_{A^*}) = \mathcal{W}_B^{\;\cP_B} \circ ( id_B \otimes f^t) . $ 
	\end{enumerate}
\end{lemma}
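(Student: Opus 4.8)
The plan is to deduce each of the four properties of $\mathcal{W}_A^{\;\cP}$ from the corresponding property of the Poincaré biextension $\cP$ of $(A,A^*)$ by $\GG_m$, transported through the homological interpretation \eqref{inter:homological}, that is, through the identification $\mathrm{Biext}^1(A,A^*;\GG_m)\cong\Hom_{\cD(\bS)}(A\otimes^{\LL}A^*,\GG_m[1])$. Since, by the convention \eqref{eq:BiextHom}, the arrow $\mathcal{W}_A^{\;\cP}:A\otimes A^*\to\GG_m$ of $\mathcal{MM}_{\leq 1}(K)$ \emph{is} by definition the class of $\cP$ in $\mathrm{Biext}^1(A,A^*;\GG_m)$, every statement about $\mathcal{W}_A^{\;\cP}$ is a statement about that biextension class, and it suffices to recall the classical facts about the Poincaré biextension.

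First, for (1): bilinearity is not a condition to be checked but part of the very structure of a biextension — a biextension of $(A,A^*)$ by $\GG_m$ is by construction a $\GG_m$-torsor on $A\times A^*$ equipped with two partial composition laws (one over each factor) that are compatible, so the induced morphism $A\otimes^{\LL}A^*\to\GG_m[1]$ is bilinear. I would simply point to this. For (2): the Poincaré biextension satisfies $\cP^*\cong\sigma^*\cP^{-1}$, where $\cP^*$ is the Poincaré biextension of $(A^*,A^{**})=(A^*,A)$ and $\sigma:A^*\times A\to A\times A^*$ exchanges the factors; this is the symmetry of the Poincaré bundle, and under \eqref{inter:homological} it becomes exactly $\mathcal{W}_{A^*}^{\;\cP^*}=(\mathcal{W}_A^{\;\cP})^{-1}\circ\sym$. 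Here I would be careful about the sign conventions: the identity $A^{**}=A$ used is the one fixed in Subsection~1.1 via the duality product $\langle z,z^*\rangle=-\langle z^*,z\rangle$, and I would check that this is the same normalization under which the symmetry of $\cP$ holds on the nose (otherwise an extra sign, absorbed into ``$\sym$'', appears).

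For (3): non-degeneracy of $\mathcal{W}_A^{\;\cP}$ means that the two adjoint morphisms $A\to \uHom(A^*,\GG_m)$ and $A^*\to\uHom(A,\GG_m)$ induced by the biextension are isomorphisms in $\mathcal{MM}_{\leq 1}(K)$; but these are precisely the biduality isomorphisms $A\xrightarrow{\sim}(A^*)^*=A^{**}$ and $A^*\xrightarrow{\sim}A^*$ coming from the Poincaré biextension being the universal/non-degenerate one (the defining property of the dual abelian variety), so this is immediate from the representability of $\uHom(A^*,\GG_m)$ by $A^{**}$ and biduality. For (4): functoriality. Given $f:B\to A$ with transpose $f^t:A^*\to B^*$, the statement $\mathcal{W}_A^{\;\cP_A}\circ(f\otimes\id_{A^*})=\mathcal{W}_B^{\;\cP_B}\circ(\id_B\otimes f^t)$ is the assertion that $(f\times\id_{A^*})^*\cP_A\cong(\id_B\times f^t)^*\cP_B$ as biextensions of $(B,A^*)$ by $\GG_m$; this is the standard characterization of the transpose morphism $f^t$ (the dual abelian variety is a contravariant functor and $\cP$ is natural in this sense — see \cite[\S1]{SGA7} or \cite{BL}). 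I would state it at the level of biextensions and then apply \eqref{inter:homological}, which is functorial in all three arguments.

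The main obstacle I anticipate is purely bookkeeping rather than mathematical: making sure that the passage from the ``biextension'' picture to the ``arrow of the tannakian category'' picture is compatible on the nose with the sign and duality conventions fixed in Subsection~1.1 (in particular the choice $z(z^*)=\overline{z^*(z)}$ giving $\langle z,z^*\rangle=-\langle z^*,z\rangle$, and the principal polarization identification $\phi_H$), so that in (2) the antisymmetry is correctly expressed through $\sym$ and the inverse, and in (4) the transpose $f^t$ is the one induced by these identifications. None of the four points requires a genuinely new argument — each is a restatement, via \eqref{inter:homological} and \eqref{eq:BiextHom}, of a classical property of the Poincaré biextension — so the ``proof'' is really an organized citation of \cite{SGA7}, \cite{D74} and \cite{BL} together with the homological interpretation established earlier in this section.
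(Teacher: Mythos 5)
Your proof is correct and matches the paper's approach: both deduce each property from the corresponding classical property of the Poincaré biextension transported through the homological interpretation, with (1) from the two partial group laws, (2) from the symmetry relation $\cP^*\cong([-\mathrm{Id}]\circ\mathrm{sym})^*\cP$ (the paper cites \cite[Remarque (10.2.4)]{D74}), (3) from the fact that $\cP$ is trivial only over $\{e\}\times A^*$ and $A\times\{e^*\}$ (equivalently your biduality phrasing), and (4) from the characterization of the transpose $f^t$ via pullback of the Poincaré biextension (the paper cites \cite[Lemma 3.2.1]{B09}). The paper's proof is a terse sequence of citations where you unpack each one, but the substance is identical; your caution about sign conventions in (2) is unnecessary since $\mathrm{sym}$ is fixed as the plain swap and the $[-\mathrm{Id}]$-pullback is precisely what the inverse accounts for.
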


\begin{proof} 
	$(1)$ The bilinearity of the arrow $\mathcal{W}_A^{\;\cP}$ is a consequence of the two group laws on the Poincaré biextenion $\cP$. 
	
	$(2)$ Consequence of \cite[Remarque (10.2.4)]{D74}.
	
	$(3)$ Since $\cP$ is trivial only over $\{e\} \times A^*$ and over $A \times \{e^*\},$ where $e$ (\textit{resp.} $e^*$) is the neutral element of $A$ (\textit{resp.} $A^*$), we have the statement (see also \cite[\S 2.5 page 38]{BL}).
	
	$(4)$ It follows by \cite[Lemma 3.2.1]{B09}.
\end{proof}

\subsection{Homological interpretation}
Let $A$ be an abelian scheme over an arbitrary scheme $S$.
 By \cite[Expos\'e VII Corollary 3.6.5]{SGA7}, the isomorphism class of the Poincaré biextension $\cP$ of $(A,A^*)$ by $\GG_m$ defines
 an arrow $\mathcal{W}_A:A {\buildrel {\scriptscriptstyle \LL}\over \otimes} A^*   \rightarrow  \GG_m[1]$ in the derived category $\cD(\bS)$ of complexes of abelian sheaves on the site associated to the scheme $S$ for any Grothendieck topology
 (here $ {\buildrel {\scriptscriptstyle \LL}\over \otimes}$ is the derived functor of the functor tensor product $ \otimes$). Since via Deligne's equivalence of categories \cite[\S 1.4]{SGA4} the complex $\GG_m[1]$ corresponds to the strictly commutative Picard stack of $\GG_m$-torsors, this arrow can be made explicit. 
	For $(P,Q) \in A \times A^*$ we set
	\begin{equation}\label{W_A}
	 	\mathcal{W}_A (P,Q) : = \cP_{P,Q} \in \GG_m[1]
	\end{equation}
and then we extend it by bilinearity to $A \otimes A^*.$ The properties of the Poincaré biextension imply that this arrow factors through $ A {\buildrel {\scriptscriptstyle \LL}\over \otimes} A^* $, giving the expected arrow  $\mathcal{W}_A :A {\buildrel {\scriptscriptstyle \LL}\over \otimes} A^*   \rightarrow  \GG_m[1]$ of $\cD(\bS)$.

	\subsection{Hodge realization} 
			 Assume the abelian variety $A$ to be $g$-dimensional  and defined over $K \subseteq \CC$.
		  As we have done in Section \ref{notation}, we identify $A$ and its Cartier dual $A^*$ with the complex torii $\CC^g/\Lambda$ and $\Hom_{\overline \CC}(\CC^g,\CC)/\Lambda^*$ respectively. The Hodge realization of $A$ is 
		  $\T_\HH (A) = \HH_1(A(\CC), \QQ) \cong \Lambda \otimes_\ZZ \QQ$ and the restriction of the duality product \eqref{dualityproduct} to $\Lambda \times \Lambda^*$ gives rise to a map
		\begin{align}
\nonumber	  \; \Lambda \otimes_\ZZ \Lambda^* &\longrightarrow   \ZZ \\
		\nonumber	 \;(\lambda ,\lambda^*) &\longmapsto  \langle\lambda,\lambda^*\rangle = \mathrm{Im}\big(\lambda^*(\lambda)\big).
	\end{align}

	The \textit{Hodge realization of the motivic Weil Pairing} of $A$ is
	\begin{align}
		\label{eq:hodge} \mathcal{W}_A^\HH:  \; \Lambda \otimes_\ZZ \Lambda^* &\longrightarrow   \ZZ \\
		\nonumber	 \;(\lambda ,\lambda^*) &\longmapsto \langle\lambda,\lambda^*\rangle =  \mathrm{Im}\big(\lambda^*(\lambda)\big).
	\end{align}
	Extending this map by bilinearity after tensoring with $\RR$, we get
	the \textit{Lie realization of the motivic Weil pairing} of $A$
		\begin{equation}\label{def:LieW_ALie}
		\begin{matrix}
			w_A^{\Lie}: & \Lie A_\CC \otimes \Lie A^*_\CC & \longrightarrow
			& \Lie {\GG_m}_\CC\hfill\\
			&(z,z^*) &\longmapsto &2{\rm i}\pi\langle z,z^*\rangle.
		\end{matrix}
	\end{equation}
In this paper we will need the composition of the Lie realization of $\mathcal{W}_A^{\;\cP}$ with the exponential map of $\GG_m,$ that we will denote
	\begin{equation}\label{def:LieW_A}
	\begin{matrix}
		\mathcal{W}_A^{\Lie}: & \Lie A_\CC \otimes \Lie A^*_\CC & \longrightarrow
		& \GG_m (\CC) \hfill\\
		&(z,z^*) &\longmapsto & e^{	w_A^{\Lie}(z,z^*)}=e^{2{\rm i}\pi\langle z,z^*\rangle}.
	\end{matrix}
\end{equation}

If we write $d\big(\log(\theta(z))\big) = \sum_{i=1}^g h_i(z) d z_i$, a base of differential forms of the second kind on $A$ is given by forms $\eta_1, \dots , \eta_g$ such that $\exp_A^*(\eta_i)= dh_i(z) $  for $i=1, \dots,g$ (see \cite[\S 2.2]{BPSS}). Identify $\CC^g$ with $\Lambda \otimes_\ZZ \RR$ through a decomposition of $\Lambda$ for the principal polarization $H$, see \cite[\S 3.1, page 46]{BL}. We extend the quasi-periods map $ \Lambda \to \CC, \lambda\mapsto \int_{\gamma_\lambda}\eta_i = h_i(z+\lambda)-h_i(z)$ to a function $\widetilde h_i :\CC^g\simeq \Lambda\otimes_{\mathbb Z}\mathbb R \to \CC$ by $\RR$-linearity (here $\gamma_\lambda$ is the image by the abelian exponential $\exp_A$ of a path from $0$ to $\lambda$ in $\mathrm{Lie}A_{\CC}$). Consider the function
\begin{align}
\nonumber \widetilde{F}:	(\CC^g\setminus \mathrm{div}(\theta))^2 & \longrightarrow  \CC\\
	\nonumber       (p,q) & \longmapsto \widetilde{F}_{q}(p) := \frac{\theta(p+q)}{\theta(p)\theta(q)}e^{-\sum_{i=1}^g p_i\widetilde h_i(q)}
\end{align}
which is a modification of the function \eqref{Fq(z)}. Observe that the ratio $F_{q}(z) / \widetilde{F}_{q}(z)$, which is well-defined on $\CC^{2g}$, is just the exponential function $ e^{\sum_{i=1}^g ( \widetilde h_i(q) -h_i(q)) z_i}$, which is a trivial theta function in $z$.
  If $p$ and $q$ belong to $\CC^g $, we have
\begin{align}\label{eq:ratio}
	\frac{\widetilde{F}_{q}(p)}{\widetilde{F}_p(q)}
	&= e^{\sum_{i=1}^g( q_i\widetilde h_i(p)- p_i\widetilde h_i(q))}\\
\nonumber	&= e^{ \pi ( H(q,p) -  H(p,q)) }= e^{ 2{\rm i} \pi \mathrm{Im}(H(q,p)) } = e^{ 2{\rm i} \pi\langle p, \phi_H(q) \rangle  } = \mathcal{W}_A^{\Lie}(p,\phi_H(q))
\end{align}
by the following lemma:

\begin{lemma}\label{lemcroixperiodquasi} 
	For $p,q \in \CC^g$, we have the formula $ \sum_{i=1}^g q_i\widetilde h_i(p)  = \pi H(q,p) = \pi \phi_H (q)(p).$ In particular,
	for $\lambda, \mu \in \Lambda$, $ \pi \phi_H (\mu)(\lambda) - \pi \phi_H (\lambda)(\mu)  = 2{\rm i} \pi \mathrm{Im} \big(H(\mu,\lambda)\big) \in 2{\rm i} \pi\ZZ$.
\end{lemma}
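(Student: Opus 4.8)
The plan is to reduce the lemma to the classical theory of theta functions and their quasi-periods, for which the key input is the explicit form of the factor of automorphy attached to the theta divisor. First I would recall that the quasi-period function $\widetilde h_i$ was constructed precisely so that $h_i(z+\lambda)-h_i(z)=\widetilde h_i(\lambda)$ for $\lambda\in\Lambda$, and that $d(\log\theta(z))=\sum_i h_i(z)\,dz_i$. Since $\theta$ has first Chern class $H$ and trivial semi-character, the transformation law reads $\theta(z+\lambda)=e_\lambda(z)\theta(z)$ with canonical factor $e_\lambda(z)=e^{\pi H(\lambda,z)+\frac{\pi}{2}H(\lambda,\lambda)}$ (up to the normalization chosen in the paper, which forces $\theta(0)=1$, $d\log\theta(0)=0$). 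Taking the logarithmic derivative of this identity in the $z_i$-directions gives $h_i(z+\lambda)-h_i(z)=\pi\,\partial_{z_i}H(\lambda,z)$, and since $H$ is $\CC$-linear in its second argument this is exactly the $i$-th coordinate of $\pi\,\phi_H(\lambda)=\pi H(\lambda,\cdot)$; hence $\widetilde h_i(\lambda)=\pi\phi_H(\lambda)(e_i)$ for lattice vectors, where $e_i$ is the $i$-th coordinate vector.

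Next I would promote this from $\lambda\in\Lambda$ to arbitrary $p\in\CC^g$ by the $\RR$-linearity with which $\widetilde h_i$ was extended (using the decomposition $\CC^g\simeq\Lambda\otimes_\ZZ\RR$): both sides of $\sum_i q_i\widetilde h_i(p)=\pi H(q,p)$ are, for fixed $q$, $\RR$-linear in $p$ — the left side by construction, the right side because $H$ is $\RR$-bilinear (indeed $\CC$-sesquilinear) — and they agree on the $\RR$-spanning set $\Lambda$ by the computation above, so they agree identically on $\CC^g$. Then one also needs $\RR$-linearity (or rather $\CC$-linearity) in $q$; here I would just observe that $q\mapsto\sum_i q_i\widetilde h_i(p)$ is manifestly $\CC$-linear and $q\mapsto\pi H(q,p)$ is $\CC$-linear in the first variable by the sesquilinearity convention of \eqref{isoPhi_H}, and both agree for $q\in\Lambda$ — or, more cleanly, fix $p\in\Lambda$ first, prove the identity for all $q\in\CC^g$ by $\CC$-linearity in $q$, then note both sides are $\RR$-linear in $p$ and spanned by $\Lambda$. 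This gives the first displayed formula $\sum_{i=1}^g q_i\widetilde h_i(p)=\pi H(q,p)=\pi\phi_H(q)(p)$.

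For the ``in particular'' clause, I would specialize to $p=\lambda$, $q=\mu$ in $\Lambda$ and compute the difference $\pi\phi_H(\mu)(\lambda)-\pi\phi_H(\lambda)(\mu)=\pi H(\mu,\lambda)-\pi H(\lambda,\mu)=\pi\bigl(H(\mu,\lambda)-\overline{H(\mu,\lambda)}\bigr)=2\mathrm i\pi\,\mathrm{Im}\bigl(H(\mu,\lambda)\bigr)$, using the Hermitian symmetry $H(\lambda,\mu)=\overline{H(\mu,\lambda)}$ of the polarization form. Finally, the integrality $2\mathrm i\pi\,\mathrm{Im}(H(\mu,\lambda))\in 2\mathrm i\pi\ZZ$ follows because $\mathrm{Im}\,H$ is, by definition of a polarization, an alternating $\ZZ$-valued form on $\Lambda$ (equivalently, via \eqref{eq:ImH} and \eqref{dualityproduct}, from $\langle\Lambda,\Lambda^*\rangle\subseteq\ZZ$ together with $\phi_H(\Lambda^*)\subseteq$ the dual lattice, which is recalled in Section~\ref{notation}).

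The main obstacle is bookkeeping with the normalization conventions: one must be careful that the specific theta function fixed in Subsection~\ref{subsection:weilPairing} (with $\theta(0)=1$, $d\log\theta(0)=0$, trivial semi-character) has automorphy factor exactly $e^{\pi H(\lambda,z)+\frac\pi2 H(\lambda,\lambda)}$ and not a twisted variant, since a nontrivial semi-character or a different centering would shift $\widetilde h_i$ by a constant and break the clean identity. The other delicate point is keeping straight which slot of $H$ is $\CC$-linear versus $\CC$-antilinear — i.e. matching the sign and conjugation conventions of \eqref{isoPhi_H}, \eqref{eq:ImH} and the duality product \eqref{dualityproduct} — so that the chain of equalities in \eqref{eq:ratio} that the lemma is designed to feed actually comes out with $\mathcal W_A^{\Lie}(p,\phi_H(q))=e^{2\mathrm i\pi\langle p,\phi_H(q)\rangle}$ and not its inverse.
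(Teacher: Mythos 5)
Your overall strategy coincides with the paper's: prove the identity for $(p,q)\in\Lambda^2$ by differentiating the theta functional equation, extend by $\RR$-linearity using $\CC^g\cong\Lambda\otimes_\ZZ\RR$, and then derive the ``in particular'' clause from Hermitian symmetry and integrality of $\mathrm{Im}\,H$ on $\Lambda$. But there is a real slip at the single delicate point you yourself flagged. You wrote the canonical automorphy factor as $e_\lambda(z)=e^{\pi H(\lambda,z)+\frac{\pi}{2}H(\lambda,\lambda)}$ and declared $H$ to be $\CC$-linear in its second argument. In the paper's convention this is backwards: since $\phi_H(z)=H(z,\cdot)$ is required by \eqref{isoPhi_H} to take values in $\Hom_{\overline\CC}(\CC^g,\CC)$ (i.e.\ to be $\CC$-antilinear), $H$ is $\CC$-linear in the \emph{first} slot and antilinear in the second, and the Birkenhake--Lange factor is $e^{\pi H(z,\lambda)+\frac{\pi}{2}H(\lambda,\lambda)}$, which is holomorphic in $z$ precisely because $H(\cdot,\lambda)$ is $\CC$-linear.

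With your version the key computation fails. If $H$ is antilinear in its second variable, $\partial_{z_i}H(\lambda,z)$ is identically zero; if instead you keep your linearity assumption, you obtain $\widetilde h_i(\lambda)=\pi H(\lambda,e_i)$ and hence $\sum_i q_i\widetilde h_i(p)=\pi H(p,q)=\pi\overline{H(q,p)}$, the complex conjugate of the claimed $\pi H(q,p)$. The paper's proof avoids this by differentiating $\pi H(z,\lambda)$ in $z_i$ to get $\widetilde h_i(\lambda)=\pi H(e_i,\lambda)$ and then using $\CC$-linearity of $H$ in its first argument to collect $\sum_i q_i\widetilde h_i(\lambda)=\pi H(q,\lambda)=\pi\phi_H(q)(\lambda)$. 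Once this slot is corrected, the rest of your argument (the $\RR$-linear extension, the identity $\pi\phi_H(\mu)(\lambda)-\pi\phi_H(\lambda)(\mu)=2\mathrm i\pi\,\mathrm{Im}\,H(\mu,\lambda)$ via $H(\lambda,\mu)=\overline{H(\mu,\lambda)}$, and the integrality from $\mathrm{Im}\,H(\Lambda,\Lambda)\subseteq\ZZ$) is correct and matches the paper.
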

\begin{proof} 
	Since $\CC^g \cong \Lambda \otimes_\ZZ \RR $, it is enough to establish the first equality for $(p,q)=(\lambda, \mu) \in \Lambda^2.$
	By definition $\widetilde h_i(\lambda)= h_i(z+\lambda) - h_i(z)$ for any point $z \in \CC^g$ such that neither $z$ nor $z+\lambda$ is a zero of $\theta(z).$
	Furthermore, $h_i(z) = \frac{d}{d z_i} \log \theta(z).$
	By the functional equation satisfied by the function $\theta(z)$, we have
	$$\widetilde{h}_i(\lambda) = \frac{d}{d z_i}\left(\log\left(\frac{\theta(z+\lambda)}{\theta(z)}\right)\right) = \frac{d}{d z_i}\left(\pi H(z,\lambda)+\frac{\pi}{2} H(\lambda,\lambda)\right).$$
	Then, we get the expected formula
	$$  \pi \phi_H (\mu)(\lambda) = \pi H(\mu,\lambda) = \pi\sum_{i=1}^g \mu_i \frac{d}{d z_i}\big(H(z,\lambda)\big)  = \sum_{i=1}^g \mu_i \widetilde{h}_i(\lambda),$$
	since $H(z,\cdot)$ is linear in $z$.
	As a consequence we have the second equality
	$$ \pi \phi_H (\mu)(\lambda) - \pi \phi_H (\lambda)(\mu) = \pi\big(H(\mu,\lambda)-H(\lambda,\mu)\big) = 2\mathrm i\pi\mathrm{Im}\big(H(\mu,\lambda)\big) \in 2{\rm i} \pi\ZZ$$
	since $H(\lambda,\mu) = \overline{H(\mu,\lambda)}$ and $\mathrm{Im}\big(H(\Lambda,\Lambda) \big) \subseteq \ZZ $.
\end{proof}

\begin{remark}
As observed by Deligne in \cite[Lemma (10.2.3.4)]{D74} there is a link between the Hodge realization of the motivic Weil pairing of $A$ and trivializations of the restriction to $\Lambda \times \Lambda^* $  of the pull-back $\exp_{A \times A^*}^* \cP$ on $\Lie A_\CC \times \Lie A^*_\CC$. In fact,
by the above Lemma, for any pair $(z,z^*) \in \Lambda  \times \Lambda^*$, the logarithm
$ \log \Big( \frac{\widetilde{F}_{\phi_H^{-1}(z^*)}(z)}{\widetilde{F}_{z}(\phi_H^{-1}(z^*))} \Big) =  2{\rm i} \pi\langle z, z^* \rangle $
takes values in $ 2 \mathrm{i} \pi  \ZZ$: it is the Hodge realization \eqref{eq:hodge} of the motivic Weil pairing of $A$.
Thus the restriction to $\Lambda  \times \Lambda^*$ of
the ratio $\widetilde{F}_{\phi_H^{-1}(z^*)}(z)/\widetilde{F}_{z}(\phi_H^{-1}(z^*))$ is a
trivialization of the restriction to $\Lambda \times \Lambda^* $  of $\exp_{A \times A^*}^* \cP$.
\end{remark}

\section{Construction of the Lie algebra of the unipotent radical of $ \Galmot (M)$}\label{constructionUR}

Let $M=[u:X \rightarrow G]$ be a $1$-motive over $K   \subseteq \CC$.
 For $i=1, \dots,n$ and $k=1, \dots,s,$ denote by $s_{i,k}:\GG_m \to (X^\vee \otimes Y)(1)$ the sections which describe the torus $(X^\vee \otimes Y)(1)$ as the direct sum of its factors (after extension of the base field $K$ to its algebraic closure if necessary).
 In \cite[Example 2.8]{B03},  the first author has constructed a biextension
 $\mathcal{B}$
 of $(A^n\times A^{*s},A^n\times A^{*s})$ by $(X^\vee \otimes Y)(1)$, whose fibre above the point $((Z_1,Z_1^*),(Z_2,Z_2^*)),$ with  $(Z_\ell,Z_\ell^*) =(Z_{\ell 1}, \dots,Z_{\ell n}, Z_{\ell 1}^*, \dots,Z_{\ell s}^* ) \in A^n \times A^{*s}$  for $\ell=1,2$, is
\begin{equation}
\label{eq:fibres}\mathcal{B}_{(Z_1,Z^*_1), (Z_2,Z^*_2)} 
=\bigwedge_{i=1, \dots,n \atop k=1,\dots,s}  (s_{i,k})_*(\cP_{Z_{1i},Z^*_{2k}} \wedge \cP^*_{Z^*_{1k},Z_{2i}}) =	\Big(\cP_{Z_{1i},Z^*_{2k}} \wedge \cP^*_{Z^*_{1k},Z_{2i}}\Big)_{i=1, \dots,n \atop k=1, \dots s}  ,
\end{equation}
where $\wedge$ is the contracted product.
Its pull-back via the diagonal morphism $d:A^n\times A^{*s} \to( A^n\times A^{*s})^2,$ denoted by
\begin{equation} \label{d^*B}
d^* \mathcal{B},
\end{equation}
is a $\Sigma-(X^\vee \otimes Y)(1)$-torsor over $A^n\times A^{*s}$ (see \cite[2.2]{B03} and in particular \cite[Definition 5.4 and Theorem 8.9]{Breen} for the notion of $\Sigma$-torsor). According to \cite[Remarque (10.2.4)]{D74}
 the Poincar\'e biextensions $\cP$ of $(A,A^*)$ and $\cP^*$ of $(A^*,A)$ satisfy $\cP^*= ( [-{\mathrm {Id}}_{A \times A^*} ] \circ \mathrm{sym})^*{\cP}$, with $\mathrm{sym}: A^* \times A\to A\times A^*$ the group morphism which exchanges the factors. Recall that in Section \ref{notation} we have identified  $A$ with $\CC^g/\Lambda$ and $A^*$ with
 $\Hom_{\overline \CC}(\CC^g,\CC)/\Lambda^*$. If we denote by $a(\lambda,\lambda^*,z,z^*)$ the factor of automorphy of the line bundle underlying $\cP$ (see \cite[\S 2.5, page 38]{BL}), the factor of automorphy of the $(X^\vee \otimes Y)(1)$-torsor $d^*\mathcal B$ is then the morphism
 $\Lambda^n \times \Lambda^{*s} \times \Lie A^n_\CC\times\Lie A^{*s}_\CC \rightarrow (X^\vee \otimes Y)(1) $
 with $(i,k)$-th component
\begin{equation}\label{eq:section}
	a(\lambda_i,\lambda^*_k,z_i,z^*_k){\overline{a(\lambda_i,\lambda^*_k,z_i,z^*_k)}}^{-1} =  e^{2{\rm i}\pi{\rm Im}( \overline{{\lambda}_k^*(z_i) } + z^*_k(\lambda_i))}
\end{equation}
for $i = 1, \dots, n$, $k= 1, \dots , s$.

The Lie bracket 
\begin{equation}\label{eq:bracket}
	[\cdot,\cdot]:( A^n\times A^{*s}) \otimes (A^n\times A^{*s}) \longrightarrow  (X^\vee \otimes Y)(1),
\end{equation}
introduced in \cite[(2.8.4), Lemma 3.3 and Corollaire 2.7]{B03}, has the following interpretations:

 $\bullet$ according to \eqref{W_A} its homological interpretation is the Lie bracket
	$[\cdot,\cdot]:( A^n\times A^{*s}) \otimes^\LL (A^n\times A^{*s}) \to  (X^\vee \otimes Y)(1)[1]$ given by 
	\begin{align}
	\label{[--]}	 \big[(Z_1,Z_1^*), (Z_2,Z_2^*)\big]  & :=	\Big(\mathcal{W}_A (Z_{1 i},Z^*_{2k}) \wedge \mathcal{W}_{A^*}  (Z^*_{1k},Z_{2i})\Big)_{{i=1, \dots,n} \atop {k=1, \dots s}} \\
\nonumber		 & \; =	\Big(  \cP_{Z_{1i},Z^*_{2k}} \wedge \cP^*_{Z^*_{1k},Z_{2i}} \Big)_{i=1, \dots,n \atop k=1, \dots s} 
	\end{align}
where $(Z_\ell,Z^*_\ell) \in A^n \times A^{*s}$ for $\ell=1,2$. The point
	$[(Z_1,Z_1^*), (Z_2,Z_2^*)] $ of $(X^\vee \otimes Y)(1)[1]$ is the fibre \eqref{eq:fibres}
	of $\mathcal{B}$ above the point $((Z_1,Z_1^*), (Z_2,Z_2^*))$ of $( A^n \times A^{*s})^2.$

 $\bullet$ according to \eqref{def:LieW_A} its Lie realization is the Lie bracket  $[\cdot,\cdot]: (\Lie A_\CC^n\times \Lie A^{*s}_\CC)^2 \to (X^\vee \otimes Y)(1)$ defined as
 \begin{align}\label{eq:LieRealization[,]}
 	\big[(z_1,z_1^*),(z_2,z_2^*)\big]_{i,k} &= \mathcal{W}_A^\Lie(z_{1i},z^*_{2k})  \cdot \mathcal{W}_{A^*}^\Lie (z^*_{1k},z_{2i}) \\
 \nonumber	&= \mathcal{W}_A^\Lie(z_{1i},z^*_{2k}) \cdot \mathcal{W}_A^\Lie(z_{2i},z^*_{1k})^{-1}
 	 =  e^{2{\rm i}\pi{\rm Im}( z^*_{2k} (z_{1i}) - z^*_{1k} (z_{2i}) )}
 \end{align}
where $(z_\ell,z^*_\ell) \in \Lie A_\CC^n \times \Lie A^{*s}_\CC$ for $\ell=1,2$.

\subsection{The pure motives underlying the unipotent radical of $\Lie \Galmot (M)$}\label{subsection:BZ}

Recall the notations of Subsection \ref{1-motives}.
Consider $P=(P_1,\dots,P_n) \in A^n, Q =(Q_1,\dots,Q_s) \in A^{*s},$ and $B$
\textit{the smallest abelian sub-variety (modulo isogenies) of $A^n\times A^{*s}$}
 which contains a multiple of the point $(P,Q) \in (A^n\times A^{*s})(\oK).$ 
 Now we restrict the base of the $(X^\vee \otimes Y)(1)$-torsor $d^* \mathcal{B}$ to the abelian variety $B$ by taking the pull-back 
 \begin{equation}\label{i^*d^*b}
 	i^*d^* \mathcal{B}
 \end{equation}
 of $d^* \mathcal{B}$
via the inclusion $i: B \hookrightarrow A^n\times A^{*s}$.

\medskip

Denote by $x_1, \dots, x_{n}$ (\textit{resp}. $y^\vee_1, \dots , y^\vee_{s}$) the basis of the character group $X$ (\textit{resp.} $Y^\vee$) giving the identification of $X$ with $\ZZ^n$ (\textit{resp}. $Y^\vee$ with  $\ZZ^s$) and set
\[v(x_i)=P_i \in A(\oK), \qquad v^*(y^\vee_k) =Q_k \in A^*(\oK)\]
for $ i=1, \dots,n $ and $ k=1, \dots,s. $ 
 Having the group morphism
$v: X \rightarrow A$ (\textit{resp}. $v^*: Y^{\vee} \rightarrow A^*$) is equivalent to having the group morphism
$$V: \ZZ \longrightarrow A^n,\;  V(1)= (P_1, \dots, P_n), \quad  ({resp}. \quad  V^*: \ZZ \longrightarrow  A^{*s},\;  V^*(1)= (Q_1, \dots, Q_s)).$$
Since by \cite[Corollary 2.1.3]{B09bis} $\mathrm{Biext}^1 (X,Y^\vee; \GG_m) \cong \mathrm{Biext}^1 (\ZZ,\ZZ; (X^\vee \otimes Y)(1)),$ the trivialization
$\psi: X \times Y^\vee \to (v \times v^*)^* \cP$, which is equivalent to the group morphism $u: X \to G$, gives a trivialization 
\begin{equation}\label{eq:Psi}
 \Psi : \ZZ \times \ZZ \longrightarrow  (V \times V^*)^* i^*d^* \mathcal{B}
\end{equation}
 of the pull-back by $ V \times V^*: \ZZ \times \ZZ \to B $ of the torsor $i^*d^* \mathcal{B}$.
    In view of \eqref{eq:fibres}, the trivialization $\Psi$ defines  
a point 
\begin{equation}\label{R}
 \Psi(1,1) : =  \big(\psi(x_i,y^\vee_k)\big)_{i= 1, \dots, n \atop k= 1, \dots, s }  \in   \big( (V \times V^*)^* i^*d^*\mathcal{B}\big)_{1,1} 
\end{equation}
which in turn furnishes a point
 $\tR$
  in the fibre of $i^*d^*\mathcal{B}$ over the point $(P,Q) \in  B \subseteq A^n\times A^{*s}.$ 

\medskip

Let $Z'$
be \textit{the smallest $\gal$-sub-module of $X^\vee \otimes Y$} such that the sub-torus $Z'(1)$ of $(X^\vee \otimes Y)(1)$ contains the image of the Lie bracket $[\cdot,\cdot]: B \otimes B \to (X^\vee \otimes Y)(1)$. Consider the push-down ${pr}_*i^*d^* \mathcal{B}$ via the projection $pr:(X^\vee \otimes Y)(1) \twoheadrightarrow (X^\vee \otimes Y/Z')(1)$ of the torsor $i^*d^* \mathcal{B}$.

	 \begin{lemma}\label{lemma:pr*i*d*B-trivial}
	The torus $Z'(1)$ is the smallest sub-torus of $(X^\vee \otimes Y)(1)$ which contains the values of the factor of automorphy of the  $(X^\vee \otimes Y)(1)$-torsor $i^*d^*\mathcal{B}$. In particular
the torsor ${pr}_*i^*d^* \mathcal{B}$ is \textit{the trivial $(X^\vee \otimes Y/Z')(1)$-torsor over $B$}, i.e. 
${pr}_*i^*d^* \mathcal{B}= B \times(X^\vee \otimes Y/Z')(1).$
\end{lemma}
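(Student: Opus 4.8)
The plan is to work with the explicit factor of automorphy \eqref{eq:section} of the torsor $d^*\mathcal{B}$, restrict it to $B$ via $i$, and then read off which one-parameter subgroups of $(X^\vee\otimes Y)(1)$ are hit by its values. First I would recall that a $(X^\vee\otimes Y)(1)$-torsor over an abelian variety is trivial on pushing to the quotient by a subtorus $T(1)$ precisely when its factor of automorphy takes values in $T(1)$: this is the standard dictionary between torsors under a torus over a complex abelian variety and their systems of factors (as used in \cite{B03}, and implicit in \cite[App. B]{BL}). Concretely, writing $a(\lambda,\lambda^*,z,z^*)$ for the factor of automorphy of $\cP$, the $(i,k)$-component of the factor of automorphy of $d^*\mathcal{B}$ is $e^{2\mathrm i\pi\,\mathrm{Im}(\overline{\lambda^*_k(z_i)}+z^*_k(\lambda_i))}$; pulling back along $i:B\hookrightarrow A^n\times A^{*s}$ just means restricting $(z,z^*)$ and $(\lambda,\lambda^*)$ to the universal cover of $B$ and to the lattice $\Lambda_B$ of $B$ respectively.

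Next I would argue both inclusions. For ``$Z'(1)$ contains the values'': the factor of automorphy is a biadditive-type cocycle, and its values on $\Lambda_B\times(\Lie B)$ lie in the subtorus generated by $[\cdot,\cdot](B\otimes B)$, because the Lie-bracket formula \eqref{eq:LieRealization[,]} is, up to signs, exactly the skew part of the same bilinear expression $\mathrm{Im}(z^*_k(z_i))$ appearing in \eqref{eq:section}; more precisely, the image of the homological bracket \eqref{[--]} on $B\otimes B$ is by construction the fibre of $\mathcal{B}$, whose factor of automorphy is \eqref{eq:section}, so the subtorus $Z'(1)$ generated by that image contains every value of the factor of automorphy (one checks the cocycle's values are built from the quantities $\langle\lambda,\mu\rangle$, $\mu\in\Lambda_B$, which are precisely the coordinates of the bracket pairing on $B$). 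For the reverse inclusion: any subtorus $T(1)$ containing all values of the factor of automorphy makes the push-forward $\mathrm{pr}'_*i^*d^*\mathcal{B}$ (to the quotient by $T(1)$) a torsor with trivial factor of automorphy, hence trivial; but then the bracket $[\cdot,\cdot]:B\otimes B\to(X^\vee\otimes Y)(1)$, being the fibre of $i^*d^*\mathcal{B}$, factors through $T(1)$, whence $Z'\subseteq$ the character-group of $T(1)$ by minimality of $Z'$. This gives $Z'(1)$ = the smallest subtorus containing the values, the first assertion. The ``In particular'' clause is then immediate: $\mathrm{pr}_*i^*d^*\mathcal{B}$ has factor of automorphy with values in $Z'(1)/Z'(1)=0$, so it is the trivial $(X^\vee\otimes Y/Z')(1)$-torsor $B\times(X^\vee\otimes Y/Z')(1)$.

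The main obstacle I anticipate is the bookkeeping in the first inclusion: making rigorous the claim that the subtorus generated by the \emph{values} of the factor of automorphy \eqref{eq:section} over the lattice $\Lambda_B$ coincides with the subtorus generated by the \emph{image of the bracket} over all of $B\otimes B$, rather than just being contained in it. The point is that evaluating the bracket on pairs of lattice translates recovers the factor of automorphy, while conversely the bracket on $B\otimes B$ is determined by its values on $\Lambda_B\otimes\Lambda_B$ by $\RR$-bilinearity and continuity together with the integrality $\langle\Lambda,\Lambda^*\rangle\subseteq\ZZ$ (Lemma \ref{lemcroixperiodquasi}); so the two subtori have the same $\QQ$-span of characters and hence coincide. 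Everything else is a formal application of the torsor/factor-of-automorphy dictionary and of the construction of $\mathcal{B}$ recalled from \cite[Example 2.8]{B03}.
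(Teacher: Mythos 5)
Your strategy is essentially the one the paper uses: tie the factor of automorphy of $i^*d^*\mathcal B$ (formula \eqref{eq:section}) to the Lie realization of the bracket (formula \eqref{eq:LieRealization[,]}), then invoke a density argument to upgrade from lattice arguments to all of $\Lie B_\CC$. The paper does it in one stroke by substituting $(z_1,z_1^*)=(z,z^*)$ and $(z_2,z_2^*)=(-\lambda,-\lambda^*)$ into \eqref{eq:LieRealization[,]}, which gives a literal equality with \eqref{eq:section} (the conjugate in $\overline{\lambda_k^*(z_i)}$ flips a sign that cancels the $-\lambda$). This makes the two sets of values identical, not merely mutually generating, which dissolves the obstacle you flagged. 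Your two-sided argument (first inclusion via recognising the cocycle values as bracket values; second inclusion via the push-down to $(X^\vee\otimes Y/T)(1)$ being trivial, hence the bracket factoring through $T(1)$, hence $Z'\subseteq T$ by minimality) is correct but longer, and the "second inclusion" route re-derives what is more directly handled by the density remark ``$(z_2,z_2^*)$ may be taken in $\HH_1(B(\CC),\QQ)$.'' One small imprecision: the bracket is the fibre of the biextension $\mathcal B$, not the fibre of the torsor $i^*d^*\mathcal B$; what $i^*d^*\mathcal B$ encodes is the diagonal restriction, with the bilinear information living in its factor of automorphy rather than its fibre. This does not damage the argument, but it is worth keeping the two objects distinct.
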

	 
\begin{proof} 
 	 By construction the torus $Z'(1)$ is the smallest sub-torus of $(X^\vee \otimes Y)(1)$ containing the image of the Lie bracket $[\cdot,\cdot]$ restricted to $ \Lie B_\CC \otimes \Lie B_\CC$. So by \eqref{eq:LieRealization[,]} we have that 
	 	\[ Z'(1 ) = \left\langle
	 	 \big(
	 	e^{2{\rm i}\pi{\rm Im}( z^*_{2k} (z_{1i}) - z^*_{1k} (z_{2i}) )} \big)_{i=1, \dots,n \atop k=1, \dots, s}
	 	\; \big\vert \;  (z_1,z_1^*),(z_2,z_2^*) \in \Lie B_\CC  \right\rangle \subseteq  (X^\vee \otimes Y)(1).
	 	\]
	 	Taking $(z_1,z_1^*) = (z,z^*)$ and $(z_2,z_2^*)=(-\lambda, -\lambda^*)$ in formula \eqref{eq:section}, we observe that the values of the Lie bracket restricted to $\Lie B_\CC$ coincide with the values of the factor of automorphy of $i^*d^*\mathcal{B}$ and moreover in the definition of $Z'(1)$ it is enough to require $(z_2,z_2^*) $ in $\HH_1(B(\CC),\QQ)$. 
	 Hence $Z'(1)$ is the smallest sub-torus which contains the values of the factor of automorphy of $i^*d^*\mathcal{B}$ and so
	  quotienting the fibre of $i^*d^*\mathcal{B}$ by $Z'(1),$ we get the trivial torsor.
	 \end{proof}

	 Because of the equality
	 $ (V \times V^*)^*  {pr}_*i^*d^* \mathcal{B}  =  {pr}_*  (V \times V^*)^* i^*d^* \mathcal{B} ,$
	  the  trivialization $\Psi$ \eqref{eq:Psi} of the pull-back $ (V \times V^*)^* i^*d^* \mathcal{B}$ of the torsor $i^*d^* \mathcal{B}$ over $\ZZ \times \ZZ$ defines a trivialization 
	 \begin{equation}\label{eq:pr_*Psi}
	 {pr}_*\Psi : \ZZ \times \ZZ \longrightarrow  (V \times V^*)^*  {pr}_* i^*d^* \mathcal{B} 
	 \end{equation}
	  	  of the pull-back via $ V \times V^*$ of the torsor $  {pr}_*i^*d^* \mathcal{B}$. Denoting by $\pi: {pr}_*i^*d^* \mathcal{B} \twoheadrightarrow (X^\vee \otimes Y/Z')(1)$ the projection on the second factor, we can summarize what we have done in the following diagram:
	 \begin{equation}\label{diagram}
	 \begin{array}{ccccc}
	(X^\vee \otimes Y)(1) &\stackrel{pr}{\twoheadrightarrow}&(X^\vee \otimes Y/Z')(1)&=& (X^\vee \otimes Y/Z')(1)\\[1mm]
	\circlearrowright &&\;\longuparrow\rlap{$\pi$}&&\;\longuparrow\rlap{$\pi$}\\[2mm]
	  i^*d^*\mathcal{B} &  \longrightarrow &B\times (X^\vee \otimes Y/Z')(1) &\longleftarrow & \ZZ \times \ZZ\times (X^\vee \otimes Y/Z')(1)\\[1mm]
	\longdownarrow  &&\longdownarrow &&\qquad \longdownarrow \; \;  \longuparrow  {pr}_* \Psi \\[1mm]
  B &= &B& \stackrel{V \times V^*}{\longleftarrow} & \ZZ \times \ZZ \rlap{.}\\
	 \end{array}
	 \end{equation}
	 
	 By push-down the point $\Psi(1,1)$ defined in \eqref{R} furnishes a point 
	  \begin{equation}\label{R'}
	{pr}_*\Psi(1,1) \in  \big(  (V \times V^*)^* {pr}_* i^*d^*\mathcal{B}\big)_{1,1}
	 \end{equation}
which corresponds to the point 
$pr_* \tR$
 in the fibre of $pr_* i^*d^*\mathcal{B} =B\times (X^\vee \otimes Y/Z')(1)$ over the point $(P,Q) \in  B \subseteq A^n\times A^{*s}.$

 Let
 \begin{equation}\label{eq:Z}
 	Z
 \end{equation}
 be \textit{the smallest $\gal$-sub-module of $X^\vee\otimes Y$} containing $Z'$ and such that the sub-torus $(Z/Z')(1)$ of $(X^\vee \otimes Y /Z')(1)$ contains the point
  \begin{equation}\label{piR'}
 \pi (pr_* \tR).
 \end{equation}
By the commutative diagram \eqref{diagram} and by \eqref{R}, we have the equalities
$$\pi (pr_* \tR) =\pi \circ {pr}_* \Psi(1,1)  =\pi \Big( {pr}_* \big(\psi(x_i,y^\vee_k)\big)_{i= 1, \dots, n \atop k= 1, \dots, s }\Big) .$$

 \begin{remark}\label{remark:torsion}
 	The point $P$ of $A^n$ and the point $\pi (pr_* \tR)$ of $ (X^\vee \otimes Y/Z')(1)$ are simultaneously torsion if and only if the point $R$ of $G^n$ is torsion.
 \end{remark}
 
 \subsection{The unipotent radical of $\Lie \Galmot (M)$}\label{subsection:LieUR(M)}

	Since $\mathrm{Gal}(\oK/K)$ acts through a finite quotient on $X^\vee \otimes Y$, we can turn a supplement $W \subseteq X^\vee \otimes Y$ of $Z$ into a $\mathrm{Gal}(\oK/K)$-module.
Consider the push-down $Pr_*i^*d^* \mathcal{B}$ of the torsor $i^*d^* \mathcal{B}$ via the projection $Pr:(X^\vee \otimes Y)(1) \twoheadrightarrow (X^\vee \otimes Y/W)(1) \cong Z(1).$ 
 We still denote $\tR$ the point of 
 $Pr_*i^*d^* \mathcal{B}$  living over $(P,Q) \in B.$ 
 By construction the torsor $Pr_*i^*d^* \mathcal{B}$ is the \textit{smallest} $Z(1)$-torsor over $B$ (with respect to pull-backs and push-downs) containing the point $\tR$.
 By \cite[Th\'eor\`eme 0.1 and 3.7]{B03} or \cite[Definition 4.3]{J}
 the unipotent radical of the Lie algebra of the motivic Galois group $\Galmot(M)$ of the 1-motive $M$ is the  \textit{smallest} extension of $B$ by  $Z(1)$ containing the point $\tR$. Since any extension is a torsor, by minimality we have a $Z(1)$-equivariant inclusion
 \[ Pr_*i^*d^* \mathcal{B}  \hookrightarrow  \Lie \UR (M)\]
which sends $\tR$ in itself. By \cite[Chap. III, Th\'eor\`eme 1.4.5 \textit{(ii)}]{Giraud} this inclusion is in fact an isomorphism:
 \[ Pr_*i^*d^* \mathcal{B} \cong   \Lie \UR (M) = \Lie  \W_{-1} \big(\Galmot (M)\big) .\]

 The factors system defining the extension $ \Lie \UR (M)$ of $B$ by $Z(1)$ is the restriction to $B \times B$ of the map
\begin{align}\label{factorsystem}
	(A^n \times A^{*s}) \times (A^n \times A^{*s}) & \longrightarrow \GG_m^{ns} ,\\
	\nonumber \big( (Z_1,Z_1^*), (Z_2,Z_2^*) \big)& \mapsto \big(\mathcal{W}_A^{\cP} (Z_{1i},Z^*_{2k}) \big)_{i=1, \dots,n \atop k=1, \dots,s}
\end{align}
where $(Z_\ell,Z_\ell^*)\in A^n \times  A^{*s}$ for $\ell=1,2$.


\section{Study of the dimension of the unipotent radical of $\Lie \Galmot (M)$}\label{studyDimUR}

If we assume $K = \oK \subseteq \CC$,
by \cite[Lemma 3.5]{BPSS} the dimension of the unipotent radical $\UR (M)$ of $\Galmot(M)$ is
\begin{equation}\label{eq:dimUR}
	\dim_\QQ \UR (M) = 2 \dim B + \dim Z(1).
\end{equation}
Hence to study the dimension of the unipotent radical $\UR(M)$ is equivalent to study the dimensions of the group varieties $B, Z'(1)$ and $(Z/Z')(1)$ introduced in Subsection \ref{subsection:BZ}.

In this section we work over $K$ and we will mention when we need $K$ algebraically closed in $\CC$ as above.

\subsection{Degeneracies of the abelian variety $B$}\label{subsection:B} The maximal dimension of $B \subseteq A^n \times A^{*s} $ is 
 $(n+s) g$
where $g$ is the dimension of the abelian variety $A.$
The dimension of $B$ decreases if there are relations between the  $P_i$'s and $Q_k$'s given by endomorphisms of $A^n \times A^{*s}$.

\begin{example}\label{ex:dimA} Let $A$ be a simple, $g$-dimensional abelian variety and set $F=\End( A) \otimes_\ZZ \QQ$ its (possibly skew) field of endomorphisms.
Let $\omega_1, \dots , \omega_g $ be invariant differentials of the first kind on $A$ spanning the $g$-dimensional vector space $\HH^0(A, \Omega^1_A)$ of holomorphic differentials and let $\lambda_1,\dots,\lambda_{2g}$ be a base of $\HH_1(A(\CC),\ZZ).$ 
The periods of $A$ are the $2 g^2$ complex numbers 
$\omega_{lj} =\int_z^{\lambda_j +z}\omega_l $ for $l=1, \dots ,g$ and $j=1, \dots, 2g$. Let $p_i =(\int_O^{P_i}\omega_1, \dots , \int_O^{P_i}\omega_g) $ and 
$q_k =(\int_O^{Q_k}\omega_1, \dots ,  \int_O^{Q_k}\omega_g)$ be abelian logarithms of the points $P_i$'s and $Q_k$'s for $i=1, \dots ,n$ and $k=1, \dots,s$ (here $O$ is the neutral element of the abelian variety $A$ and we identify $q_k$ with $\phi_H(q_k)$). Recall that the abelian logarithms depend on the paths up to $F$-linear combinations of the $\omega_{lj}$. Then the dimension of $B$ is $g$ times the dimension of the $F$-vector space generated modulo $\sum_{j=1}^{2g}F(\omega_{1j},\dots,\omega_{gj})$ by the points $p_i$'s and $q_k$'s:
$$ \dim B = g \cdot \dim_F \Bigg( \Big(\sum_{i=1, \dots, n \atop k=1, \dots , s} F p_i + F q_k +\sum_{j=1}^{2g}F(\omega_{1j},\dots,\omega_{gj}) \Big) \Big/ \sum_{j=1}^{2g}F(\omega_{1j},\dots,\omega_{gj}) \Bigg).$$ 
Indeed $B$ is isogenous to $A^N $ with $N$ the dimension of the above $F$-vector space.
\end{example}

If the abelian variety $A$ isn't simple, see \cite[Lemma 4.3]{BPSS} for the computation of the dimension of $B$.

\subsection{Degeneracies of the torus $Z'(1)$}\label{subsectionZ'(1)} The sub-torus $Z'(1)$ of $(X^\vee \otimes Y)(1)$ has maximal dimension 
$ns$. As we will see now this dimension is governed by the abelian variety $B$.
 The inclusion $j:B \hookrightarrow A^n\times {A^*}^s$ induces group morphisms
\begin{align}
\nonumber j :B &\longrightarrow  A^n\times {A^*}^s\\
\nonumber b &\longmapsto \big(\gamma_1(b),\dots,\gamma_{n}(b),\gamma_{1}^*(b),\dots,\gamma_{s}^*(b)\big)
\end{align}
where  $\gamma_i\in\mathrm{Hom}_{\mathbb Q}(B,A) := \mathrm{Hom}(B,A) \otimes_\ZZ \QQ $ ({\it resp.} $\gamma^*_k \in\mathrm{Hom}_{\mathbb Q}(B,A^*)$) is the composition of $j$ with the projection on the $i$-th factor $A$ of $A^n$ ({\it resp.} on the $k$-th factor $A^*$ of $ {A^*}^s$) for $i=1,\dots,n$ ({\it resp.} $k=1,\dots,s$).
We denote with an upper-index ${}^t$ the transpose of a group morphism. 
Set 
\begin{equation}\label{eq:beta}
	\beta_{i,k}: =\gamma_i^t\circ\gamma_k^* \in \mathrm{Hom}_{\mathbb Q}(B,B^*).
	\end{equation}
 Observe that $\beta_{i,k}^t=\gamma_k^{*t} \circ \gamma_i  \in \mathrm{Hom}_{\mathbb Q}(B,B^*)$ and $j^t$ is the natural projection ${A^*}^n\times A^s \to B^* = {A^*}^n\times A^s/ C^* $ with $C=A^n\times{A^*}^s/ j(B)$. The homomorphism $\beta_{i,k}$ ({\it resp.} $\beta_{i,k}^t$) can be written as $ j^t \circ \ell_{i,k} \circ j $ where $\ell_{i,k} : A^n \times A^{*s} \to A^{*n} \times A^s$ is the $\QQ$-homomorphism defined by the $(n+s)\times(n+s)$ matrix with a $1$ in position $(n+k,i)$ ({\it resp.} $(i,n+k)$) and zeros elsewhere. In particular the homomorphism $\beta_{i,k}+\beta_{i,k}^t$ sends a point $b\in B$ with $j(b) = (Z,Z^*)=(Z_1,\dots,Z_n,Z^*_1,\dots,Z^*_s)$ onto the point $(0, \dots, 0, Z^*_k, 0, \dots, 0, Z_i, 0, \dots, 0) \in {A^*}^n\times A^s$ modulo $C^*$, where the point $Z^*_k$ is in the $i$-th factor $A^*$ and $Z_i$ is in the $k$-th factor $A$.  As usual 
 denote by $x_1, \dots, x_{n}$ (\textit{resp}. $y^\vee_1, \dots , y^\vee_{s}$) the basis of the character group $X$ (\textit{resp.} $Y^\vee$) giving the identification of $X$ with $\ZZ^n$ (\textit{resp}. $Y^\vee$ with  $\ZZ^s$) and set
 \[v(x_i)=P_i \in A(\oK), \qquad v^*(y^\vee_k) =Q_k \in A^*(\oK)\]
 for $ i=1, \dots,n $ and $ k=1, \dots,s. $

\begin{theorem}\label{teo:dimZ'(1)}
	The space of equations of $Z'(1)$ is the kernel of the surjective map
	\begin{equation}\label{applicentrale}
		\begin{matrix}
			f:(X\otimes Y^\vee)_\QQ &\twoheadrightarrow &\sum_{i,k}\QQ (\beta_{i,k} + \beta_{i,k}^t) \subset \mathrm{Hom}_\QQ (B,B^*)\\
			\hfill x_i\otimes y_k^\vee &\mapsto & \beta_{i,k} + \beta_{i,k}^t \hfill
		\end{matrix}
	\end{equation}
	In particular
the dimension of the torus $Z'(1)$ is equal to the dimension of the $\QQ$-vector space generated in $\mathrm{Hom}_\QQ(B,B^*)$ by the elements $\beta_{i,k}+\beta_{i,k}^t$ for $i=1,\dots,n$ and $k=1,\dots,s$.
\end{theorem}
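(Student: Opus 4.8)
The plan is to identify $Z'(1)$ via its factor of automorphy, which by Lemma~\ref{lemma:pr*i*d*B-trivial} is the smallest sub-torus containing the values of the factor of automorphy of $i^*d^*\mathcal{B}$, and then to translate this cocycle-theoretic description into the linear-algebraic statement about the $\beta_{i,k}+\beta_{i,k}^t$. Concretely, a sub-torus of $(X^\vee\otimes Y)(1)\subseteq\GG_m^{ns}$ is cut out by a $\QQ$-subspace of characters, i.e.\ a subspace of $(X\otimes Y^\vee)_\QQ$; the space of equations of $Z'(1)$ is exactly the set of characters $\sum_{i,k}\alpha_{i,k}x_i\otimes y_k^\vee$ that kill every value of the factor of automorphy \eqref{eq:section} restricted to $B$. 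So the first step is to write down, for $j(b)=(Z,Z^*)$ and the corresponding lifts, the $(i,k)$-component $e^{2{\rm i}\pi{\rm Im}(\overline{\lambda_k^*(z_i)}+z_k^*(\lambda_i))}$, and to observe — exactly as in the proof of Lemma~\ref{lemma:pr*i*d*B-trivial} — that these values are the same as the Lie-bracket values $e^{2{\rm i}\pi{\rm Im}(z_{2k}^*(z_{1i})-z_{1k}^*(z_{2i}))}$ on $\HH_1(B(\CC),\QQ)$.

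Next I would use the principal polarization of $A$ to identify $A^{*}$ with $A$ (via $\phi_H$, as set up in Section~\ref{notation}), so that $\HH_1(B(\CC),\QQ)$ sits inside $(\HH_1(A(\CC),\QQ))^{n+s}$ and the pairing $\langle\cdot,\cdot\rangle$ on each factor becomes the (up to $2{\rm i}\pi$) standard alternating Riemann form. Then the character $\sum_{i,k}\alpha_{i,k}x_i\otimes y_k^\vee$ annihilates $Z'(1)$ iff for all $b_1,b_2\in B$ with $j(b_\ell)=(Z_{\ell 1},\dots,Z_{\ell n},Z^*_{\ell 1},\dots,Z^*_{\ell s})$ we have $\sum_{i,k}\alpha_{i,k}\big(\langle Z_{1i},Z^*_{2k}\rangle-\langle Z_{1k}^{*},Z_{2i}\rangle\big)\in\ZZ$, hence (after tensoring with $\QQ$ and using bilinearity) that the bilinear form $\sum_{i,k}\alpha_{i,k}\langle\gamma_i(b_1),\gamma_k^*(b_2)\rangle - \sum_{i,k}\alpha_{i,k}\langle\gamma_k^{*}(b_1),\gamma_i(b_2)\rangle$ vanishes identically on $B\times B$. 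The point is now that $b\mapsto\langle\gamma_i(b_1),\gamma_k^*(b_2)\rangle$ is, under the polarization identification, precisely the Riemann form attached to the homomorphism $\gamma_i^t\circ\gamma_k^*=\beta_{i,k}\colon B\to B^*$, and the second term is the Riemann form of $\beta_{i,k}^t$; since the correspondence between homomorphisms $B\to B^*$ and bilinear forms on $\HH_1(B)$ is injective, the total form vanishes iff $\sum_{i,k}\alpha_{i,k}(\beta_{i,k}+\beta_{i,k}^t)=0$ in $\Hom_\QQ(B,B^*)$. This is exactly the statement that the space of equations of $Z'(1)$ is $\ker f$, and the dimension count follows immediately.

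The remaining thing to check is that $f$ is surjective onto $\sum_{i,k}\QQ(\beta_{i,k}+\beta_{i,k}^t)$, which is automatic from the definition of the target as that span; and that the ambient torus is $(X^\vee\otimes Y)(1)$ rather than all of $\GG_m^{ns}$, so that characters are genuinely indexed by $(X\otimes Y^\vee)_\QQ$ — this is just the setup of Subsection~\ref{subsection:LieBracket}. I expect the main obstacle to be the careful bookkeeping in the polarization identification: making sure that the skew-symmetrization $\langle\gamma_i(b_1),\gamma_k^*(b_2)\rangle-\langle\gamma_k^*(b_1),\gamma_i(b_2)\rangle$ matches $\beta_{i,k}+\beta_{i,k}^t$ with the correct sign, using $\langle z,z^*\rangle=-\langle z^*,z\rangle$ and the relation $\beta_{i,k}^t=\gamma_k^{*t}\circ\gamma_i$, and verifying that passing from the integrality condition ``$\in\ZZ$'' to the $\QQ$-linear condition ``$=0$'' is legitimate because a sub-torus is determined by the $\QQ$-span of its vanishing characters and because the values of the factor of automorphy on $\HH_1(B(\CC),\QQ)$ generate $Z'$ as a $\gal$-module (the minimality and Galois-stability built into the definition of $Z'$). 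Once the identification of bilinear forms with $\Hom(B,B^*)$ is in place — this is the standard dictionary between polarizations/homomorphisms and Riemann forms — the rest is formal.
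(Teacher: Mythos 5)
Your proposal is correct and, once unwound, it is essentially the same argument as the paper's: both rest on the adjunction property of the Weil pairing (which lets you rewrite the $(i,k)$-component of the Lie bracket on $B\times B$ as $\mathcal{W}_B\big(b,(\beta_{i,k}+\beta_{i,k}^t)(b')\big)$, or equivalently as the bilinear form $\langle b, (\beta_{i,k}+\beta_{i,k}^t)(b')\rangle_B$ on $\HH_1(B,\QQ)$), followed by non-degeneracy of the Weil pairing to conclude that $\ker f$ is exactly the space of characters annihilating $Z'(1)$. The only real difference is the level at which you work: the paper runs the argument at the motivic level, using Lemma~\ref{propieteW_A}~(4) (adjunction) and (3) (non-degeneracy) applied directly to the arrow $\mathcal{W}_B$ in $\mathcal{MM}_{\leq 1}(K)$, and then invokes \eqref{[--]} to identify the factor-of-automorphy/Lie-bracket values; you carry out the same computation at the Hodge realization, with the duality pairing $\langle\cdot,\cdot\rangle$ playing the role of $\mathcal{W}_B$ and the injectivity of $\Hom_\QQ(B,B^*)\to\{$bilinear forms on $\HH_1(B,\QQ)\}$ playing the role of non-degeneracy. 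Neither route is more general than the other here, but the paper's formulation has the minor advantage that the passage from the integrality condition on the lattice to the $\QQ$-linear vanishing condition, which you flag as something to check, is absorbed painlessly into the statement that $\mathcal{W}_B(b,\cdot)=1$ for $b,b'$ running over $B$ forces the argument to vanish.

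One small caveat, which you already flag yourself: the sign convention in your intermediate display
\[
\sum_{i,k}\alpha_{i,k}\langle\gamma_i(b_1),\gamma_k^*(b_2)\rangle - \sum_{i,k}\alpha_{i,k}\langle\gamma_k^{*}(b_1),\gamma_i(b_2)\rangle
\]
needs to be reconciled with \eqref{eq:LieRealization[,]}. If the second pairing is read on $A^*\times A^{**}\cong A^*\times A$, then $\langle z^*,z\rangle = -\langle z,z^*\rangle$ flips the sign and you land on $\langle z_{1i},z^*_{2k}\rangle + \langle z_{2i},z^*_{1k}\rangle$ instead of the required $\langle z_{1i},z^*_{2k}\rangle - \langle z_{2i},z^*_{1k}\rangle$. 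Keeping track of whether each $\langle\cdot,\cdot\rangle$ lives on $A\times A^*$ or $A^*\times A$ is exactly where the $+$ between $\beta_{i,k}$ and $\beta_{i,k}^t$ in the target of $f$ comes from, so this is the one spot where the bookkeeping has to be done carefully before the conclusion; once you do that, the argument closes.
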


\begin{proof} 
If $b,b'$ are two arbitrary points of $B$, using definition \eqref{[--]} plus (4) and (1) of Lemma \ref{propieteW_A} we compute the component $(i,k)$ of the Lie bracket $[j(b),j(b')]$:
\begin{align}\label{eq:W(b,b')}
 \big[j(b),j(b')\big]_{i,k} &= 	\mathcal{W}_A\big(\gamma_i(b),\gamma^*_k(b')) \wedge	\mathcal{W}_{A^*}(\gamma^*_k(b),\gamma_i(b')\big) \\
\nonumber &= \mathcal{W}_B\big(b, \gamma_i^t \circ \gamma^*_k(b')) \wedge \mathcal{W}_{B}(b,\gamma^{*t}_k \circ\gamma_i(b')\big) \\
\nonumber &=  \mathcal{W}_B\big(b,(\gamma_{i}^t\circ\gamma_{k}^*+\gamma_{k}^{*t}\circ\gamma_{i})(b')\big)\\
\nonumber &=  \mathcal{W}_B\big(b,(\beta_{i,k}+\beta_{i,k}^t)(b')\big).
\end{align}
 In particular, for $\lambda_{i,k}\in\mathbb Z$,
\begin{align*}
\prod_{i=1}^n\prod_{k=1}^s \big[j(b),j(b')\big]_{i,k}^{\lambda_{i,k}} 
&= \prod_{i=1}^n\prod_{k=1}^s  	\mathcal{W}_B\big(b,(\beta_{i,k}+\beta_{i,k}^t)(b')\big)^{\lambda_{i,k}} \\
&=  	\mathcal{W}_{B}\Big(b,\sum_{i=1}^n\sum_{k=1}^s\lambda_{i,k}(\beta_{i,k}+\beta_{i,k}^t)(b')\Big)
.\end{align*}
Thus, multiplicative relations
\begin{equation}\label{eqZ'(1)}
\prod_{i=1}^n\prod_{k=1}^s\big[j(b),j(b')\big]_{i,k}^{\lambda_{i,k}}=1
\end{equation}
for $b$ and $b'$ running over $B$, are equivalent to relations
$$ 	\mathcal{W}_{B}\Big(b,\sum_{i=1}^n\sum_{k=1}^s\lambda_{i,k}(\beta_{i,k}+\beta_{i,k}^t)(b')\Big) = 1$$
for $b$ and $b'$ running over $B$. By Lemma \ref{propieteW_A} (3) it is further equivalent to
\begin{equation}\label{eq:crochet=0}
\sum_{i=1}^n\sum_{k=1}^s\lambda_{i,k}(\beta_{i,k}+\beta_{i,k}^t)=0.
\end{equation}
Hence the equations \eqref{eqZ'(1)} of the torus $Z'(1)$ coincide with the kernel of $f$.
\end{proof}

\begin{remark} 
	\begin{enumerate}
		\item The condition \eqref{eq:crochet=0} is also expressed as $\ell \circ j(B) \subset C^*$, where $\ell:A^n\times {A^*}^s \to {A^*}^n\times A^s$ is the $\mathbb Q$-homomorphism defined by the matrix $\Big(\begin{matrix}0 &L\\ L^t &0\end{matrix}\Big)$ with $L:=\left(\lambda_{i,k}\right)_{i =1, \dots ,n \atop k  =1, \dots ,s}$.
		\item Consider the restriction $[\cdot,\cdot] \circ (j \otimes j): B \otimes B \longrightarrow Z'(1)$ to $B \otimes B$ of the Lie bracket \eqref{eq:bracket}. According to the equalities \eqref{eq:W(b,b')} and Lemma \ref{propieteW_A} (3), the set of points of $B$ which are orthogonal to all points of $B$ with respect to this restriction is 
		\[B_\tors + \bigcap_{i =1, \dots ,n \atop k  =1, \dots ,s} \ker (\beta_{i,k}+\beta_{i,k}^t). \]
	\end{enumerate}
\end{remark}

\begin{remark} \begin{enumerate}
			\item  If $B=0$, we have that for all $i$ and $k,$ the group morphisms $\beta_{i,k} \in \mathrm{Hom}_{\mathbb Q}(B,B^*)$ are zero, and hence $Z'(1)=\{1\}.$
		\item If $n=s=1$, it exists a non trivial relation \eqref{eq:crochet=0} (i.e. $\lambda_{i,k}$ not all zero) if and only if $\beta_{1,1} = \gamma_1^t \circ \gamma_1^*$ is antisymmetric (i.e. $\beta_{1,1}+\beta^t_{1,1}=0$), if and only if $Z'(1)$ is trivial. If we denote by $\sym_{A \times A^*} $ the group morphism from $A\times A^*$ to $A^*\times A$ which exchanges the factors, by \cite[Proposition 1]{Br94} and thereafter, the vanishing of $j^t\circ \sym_{A \times A^*} \circ j$ on $B$ is equivalent to the fact that the abelian sub-variety $B$ is isotropic in $A\times A^*$, i.e. the restriction to $B$ of the Poincar\'e biextension $\mathcal P$ on $A \times A^*$ is algebraically equivalent to zero. But, $\sym_{A \times A^*} \circ j$ is the morphism $(\gamma^*_{1},\gamma_{1})$ and $j^t=\gamma_{1}^t + \gamma_{1}^{*t}$ in the notations above. Thus, $j^t\circ \sym_{A \times A^*} \circ j = \gamma_{1}^t\circ\gamma_{1}^* + {\gamma_{1}^{*t}}\circ\gamma_{1}$ is $\beta_{1,1}+\beta_{1,1}^t$.
	In particular $Z'(1)=\{1\}$ if and only if $B$ is isotropic in $A \times A^*.$
 \item If $A$ is an elliptic curve not CM and $n=s=1$, the dimension of $Z'(1)$ is 1, since there are no antisymmetric group morphism.
\end{enumerate}
\end{remark}

As a consequence of the above Theorem (see also \cite[Proposition 1, iii bis)]{Br94}) and of Example \ref{ex:dimA} we have that  

\begin{corollary}\label{cor:DimSemi-ellipticCase}
	Suppose that $n=s=1$ and that $A$ is a simple $g$-dimensional abelian variety. With the above notations we have that
	\begin{enumerate}
		\item if $P$ and $Q$ are torsion points, then $\dim B= \dim Z'(1)=0 $;
		\item if $P$ is a torsion point but not $Q$, or if $Q$ is a torsion point but not $P$, $\dim B= g$ and $\dim Z'(1)=0$;
		\item if $P$ and $Q$ are not torsion and there exists a group morphism $f \in \Hom_{\QQ} (A,A^*) $ (\textit{resp.} $f \in \Hom_{\QQ} (A^*,A) $) such that $f(P)=Q$ (\textit{resp.} $f(Q)=P$), then $\dim B= g$ and 
		\begin{enumerate}
			\item if $f$ is antisymmetric,  $\dim Z'(1)=0$;
			\item if $f$ is not antisymmetric,  $\dim Z'(1)=1$;
		\end{enumerate}
		\item  if $P$ and $Q$ are $\Hom_{\QQ} (A,A^*)$-linearly independent, then $\dim B= 2g$ and $\dim Z'(1)=1$.
	\end{enumerate}
\end{corollary}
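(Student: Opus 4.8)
The plan is to run through the four (mutually exclusive) situations and in each one compute $\dim B$ and $\dim Z'(1)$ directly. For $\dim Z'(1)$ the tool is Theorem \ref{teo:dimZ'(1)}: since here $n=s=1$ it says $\dim Z'(1)=\dim_\QQ \QQ(\beta_{1,1}+\beta_{1,1}^t)$, so $\dim Z'(1)\in\{0,1\}$, and it equals $0$ exactly when $\beta_{1,1}=\gamma_1^t\circ\gamma_1^*$ is antisymmetric, i.e. $\beta_{1,1}+\beta_{1,1}^t=0$. For $\dim B$ one may either plug the abelian logarithms of $P$ and $Q$ into the formula of Example \ref{ex:dimA}, or --- which makes the computation of $\beta_{1,1}$ below more transparent --- identify by hand the abelian subvariety $B\subseteq A\times A^*$ generated by a multiple of $(P,Q)$, using that $A$ and hence $A^*$ are simple.

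I would first dispose of the degenerate cases. If $P,Q$ are both torsion a multiple of $(P,Q)$ is $0$, so $B=0$, $\gamma_1=\gamma_1^*=0$, $\beta_{1,1}=0$, hence $\dim B=\dim Z'(1)=0$. If exactly one of them, say $P$, is torsion (of order $d$), then $(0,dQ)$ is a multiple of $(P,Q)$ lying in $\{0\}\times A^*$, so $B\subseteq\{0\}\times A^*$ by minimality, and $B\neq 0$ since $dQ$ is non-torsion; simplicity of $A^*$ forces $B=\{0\}\times A^*$, so $\dim B=g$, while $\gamma_1=\pr_A|_B=0$ gives $\beta_{1,1}=0$ and $\dim Z'(1)=0$. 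The case $Q$ torsion, $P$ not, is symmetric ($B=A\times\{0\}$, $\gamma_1^*=0$).

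For the non-torsion cases, suppose $P,Q$ are non-torsion. If $f\in\Hom_\QQ(A,A^*)$ satisfies $f(P)=Q$, then $f$ is nonzero (since $Q\neq 0$), hence an isogeny by simplicity of $A$ and $A^*$, and it is the unique such morphism; the point $(P,Q)$ lies on the graph of $f$, which is $\cong A$, so $B$ equals that graph and $\dim B=g$. Identifying $B$ with $A$ via $a\mapsto (a,f(a))$ turns $\gamma_1$ into $\id_A$ and $\gamma_1^*$ into $f$, so $\beta_{1,1}$ is carried to $f$ and $\beta_{1,1}^t=\gamma_1^{*t}\circ\gamma_1$ to $f^t$; thus $\beta_{1,1}$ is antisymmetric iff $f$ is, and Theorem \ref{teo:dimZ'(1)} gives $\dim Z'(1)=0$ in subcase (a) and $\dim Z'(1)=1$ in subcase (b). The variant with $f\in\Hom_\QQ(A^*,A)$, $f(Q)=P$, reduces to the previous one on passing to $f^{-1}$ (again an isogeny, with $f$ antisymmetric iff $f^{-1}$ is), or one may invoke Lemma \ref{propieteW_A} (2). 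Finally, if $P,Q$ are $\Hom_\QQ(A,A^*)$-linearly independent, i.e. no such $f$ exists, then $B$ is not a graph: $\pr_A(B)=A$ and $\pr_{A^*}(B)=A^*$ by simplicity, and if $B\cap(\{0\}\times A^*)=0$ then $\pr_A$ would restrict to an isogeny $B\to A$, yielding a forbidden $f$; so $\{0\}\times A^*\subseteq B$, and with $\pr_A(B)=A$ this forces $B=A\times A^*$, $\dim B=2g$. Then $\gamma_1=\pr_A$, $\gamma_1^*=\pr_{A^*}$, and a short computation in $B^*=A^*\times A$ gives $\beta_{1,1}\colon (a,a^*)\mapsto (a^*,0)$ and $\beta_{1,1}^t\colon (a,a^*)\mapsto (0,a)$, so $\beta_{1,1}+\beta_{1,1}^t\colon (a,a^*)\mapsto (a^*,a)$ is an isogeny, in particular nonzero, whence $\dim Z'(1)=1$.

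The only step that is not routine bookkeeping is keeping track of the duality identifications of Section \ref{notation} --- what $B^*$ is in terms of $A$, $A^*$, what the transposes $\gamma_1^t$, $\gamma_1^{*t}$ do, and the $\overline\CC$-linear pairing --- precisely enough to recognise $\beta_{1,1}$ as ($\QQ$-isogenous to) $f$ in case (3) and as the nonzero morphism $(a,a^*)\mapsto (a^*,a)$ in case (4); a minor additional point is justifying, in case (4), the passage from ``$\Hom_\QQ(A,A^*)$-linearly independent'' to ``$B=A\times A^*$'', which rests on the simplicity of $A$ and $A^*$.
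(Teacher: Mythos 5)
Your proposal is correct and follows essentially the route the paper intends: the paper states this corollary as a direct consequence of Theorem~\ref{teo:dimZ'(1)} (which for $n=s=1$ reduces $\dim Z'(1)$ to whether $\beta_{1,1}+\beta_{1,1}^t$ vanishes, i.e.\ whether $\beta_{1,1}$ is antisymmetric, as spelled out in the remark following that theorem) together with Example~\ref{ex:dimA} for $\dim B$; you supply exactly the missing computations. The one small variation is that you determine $B$ structurally in each case (via simplicity of $A$ and $A^*$) rather than by plugging abelian logarithms into the $F$-vector-space formula of Example~\ref{ex:dimA}, which is harmless and arguably cleaner, since you need the explicit description of $B$ anyway in order to read off $\gamma_1,\gamma_1^*$ and hence $\beta_{1,1}$.
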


\begin{corollary}\label{cor:maxdim}
If $B$ is maximal, i.e. $B=A^n\times A^{*s}$, the dimension of $Z'(1)$ is maximal, i.e. $\dim Z(1) = \dim Z'(1) = ns$.
In particular, assuming $K=\oK$ the dimension of the unipotent radical $\UR(M)$ of the motivic Galois group $\Galmot(M)$ is maximal, that is
\[ \dim_\QQ \UR(M) = 2 (n+s) g + ns\]
where $g$ is the dimension of the abelian variety $A$.
\end{corollary}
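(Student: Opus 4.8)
The plan is to feed the hypothesis $B=A^n\times A^{*s}$ into the dimension formula \eqref{eq:dimUR}, together with Theorem \ref{teo:dimZ'(1)}. Working over $\oK$ (which changes neither $B$, nor $Z(1)$, nor the semi-abelian variety \eqref{eq:ses}, hence not $\dim_\QQ\UR(M)$), formula \eqref{eq:dimUR} gives $\dim_\QQ\UR(M)=2\dim B+\dim Z(1)$. Since $B=A^n\times A^{*s}$ means $\dim B=n\dim A+s\dim A^*=(n+s)g$, the only thing left is to prove $\dim Z(1)=ns$, i.e. $Z(1)=\GG_m^{ns}$. Because $Z'(1)\subseteq Z(1)\subseteq\GG_m^{ns}$ are sub-tori of $\GG_m^{ns}$ and a closed connected subgroup of $\GG_m^{ns}$ of dimension $ns$ is all of $\GG_m^{ns}$, it suffices to show $\dim Z'(1)=ns$.

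Here I would apply Theorem \ref{teo:dimZ'(1)}, according to which $\dim Z'(1)$ is the $\QQ$-dimension of the span of the homomorphisms $\beta_{i,k}+\beta_{i,k}^t$ in $\Hom_\QQ(B,B^*)$, the equations of $Z'(1)$ being $\ker f$ for the surjection $f$ of \eqref{applicentrale}. When $B=A^n\times A^{*s}$ the inclusion $j\colon B\hookrightarrow A^n\times A^{*s}$ is the identity, so $C=A^n\times A^{*s}/j(B)=0$, $C^*=0$, $B^*=A^{*n}\times A^s$, the $\gamma_i$ and $\gamma_k^*$ are the canonical projections, and $\beta_{i,k}+\beta_{i,k}^t\colon A^n\times A^{*s}\to A^{*n}\times A^s$ sends $(Z_1,\dots,Z_n,Z_1^*,\dots,Z_s^*)$ to the point having $Z_k^*$ in the $i$-th factor of $A^{*n}$, $Z_i$ in the $k$-th factor of $A^s$, and $0$ elsewhere. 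I claim these homomorphisms are $\QQ$-linearly independent. Given a relation $\sum_{i,k}\lambda_{i,k}(\beta_{i,k}+\beta_{i,k}^t)=0$, evaluate it on points $(Z_1,\dots,Z_n,0,\dots,0)$ supported in $A^n$: the $k$-th component of the value in $A^s$ is then $\sum_i\lambda_{i,k}Z_i$, which must vanish for all $Z_1,\dots,Z_n\in A$; since $A\neq 0$ and multiplication by a nonzero rational is a nonzero isogeny of $A$, this forces $\lambda_{i,k}=0$ for all $i,k$. (Equivalently, by item (1) of the Remark right after Theorem \ref{teo:dimZ'(1)}, such a relation reads $\ell\circ j(B)\subseteq C^*$ with $\ell$ defined by the matrix $\Big(\begin{matrix}0 & L\\ L^t & 0\end{matrix}\Big)$, $L=(\lambda_{i,k})$, which with $j=\id$ and $C^*=0$ gives $L=0$.) Hence $f$ is an isomorphism and $\dim Z'(1)=\dim_\QQ(X\otimes Y^\vee)_\QQ=ns$.

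Putting the two computations together, $\dim Z(1)=\dim Z'(1)=ns$ and $\dim B=(n+s)g$, so \eqref{eq:dimUR} yields $\dim_\QQ\UR(M)=2(n+s)g+ns$. The argument is essentially a matter of unwinding the definitions of \S\ref{subsection:BZ} and Theorem \ref{teo:dimZ'(1)}, so there is no genuine obstacle; the only points deserving a word are the reduction to an algebraically closed base field needed to invoke \eqref{eq:dimUR}, and the (implicit, harmless) use of $A\neq 0$ in the final linear-independence step.
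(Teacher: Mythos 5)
Your proof is correct and follows essentially the same route as the paper's: feed $\dim B=(n+s)g$ into \eqref{eq:dimUR}, then show the $\beta_{i,k}+\beta_{i,k}^t$ are $\QQ$-linearly independent via Theorem \ref{teo:dimZ'(1)} by evaluating a putative relation on suitable test points (you test on $A^n$, the paper tests on a factor $A^*$; these are symmetric variants of the same computation). Your explicit note that one may pass to $\oK$ before invoking \eqref{eq:dimUR} is a small point of care that the paper leaves implicit.
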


\begin{proof} We will use the notation of Theorem \ref{teo:dimZ'(1)}. Since $B=A^n\times A^{*s},$
	 we may take $\gamma_{i}\in\mathrm{Hom}_{\mathbb Q}(B,A)$ as the projection of $B$ onto the $i$-th factor $A$ of $A^n\times A^{*s}$ for $i=1, \dots, n$, and $\gamma_k^*\in\mathrm{Hom}_{\mathbb Q}(B,A^*)$ as the projection of $B$ onto the $k$-th factor $A^*$ of $A^n\times A^{*s}$ for $k=1, \dots, s$. 
	 The transpose $\gamma_{i}^t: A^* \to B^* = A^{*n}\times A^{s}$ is the inclusion of $A^*$ into the $i$-th factor $A^*$ of $A^{*n}\times A^{s},$ and 
	 $\gamma_{k}^{*t}: A \to B^* = A^{*n}\times A^{s}$ is the inclusion of $A$ into the $k$-th factor $A$ of $A^{*n}\times A^{s}.$
	 Therefore $\beta_{i,k}=\gamma_i^t\circ\gamma_k^*$ is the identity between the $k$-th factor $A^*$ of $B$ and the $i$-th factor $A^*$ of $B^*=A^{*n}\times A^{s}$ and zero on the other factors. Similarly $\beta_{i,k}^t = \gamma_k^{*t} \circ \gamma_i$ is the identity between the $i$-th factor $A$ of $B$ and the $k$-th factor $A$ of $B^*=A^{*n}\times A^{s}$ and zero on the other factors.
	 For any $x$ in the $k$-th factor $A^*$ of $B$, the component of $(\beta_{i,k}+\beta_{i,k}^t) (x)$ in the $i$-th factor $A^*$ of $B^*=A^{*n}\times A^{s}$ is just $x$.
	 
	  Assume there is a relation with coefficients in $\QQ$
	 \[\sum_{i=1}^n\sum_{k=1}^s\lambda_{i,k}(\beta_{i,k}+\beta_{i,k}^t) = 0.\]
	 Evaluating the left hand side in a point $x$ as above, its component in the $i$-th factor $A^*$ of $B^*=A^{*n}\times A^{s}$ is just $\lambda_{i,k} \, x .$ By the above relation this implies $\lambda_{i,k}=0$. Hence the $\beta_{i,k}+\beta_{i,k}^t$ are linearly independent over $\mathbb Q$ and by Theorem \ref{teo:dimZ'(1)}, the dimension of $Z'(1)$ is therefore maximal, i.e. $\dim Z(1) = \dim Z'(1) = ns$. Using equality \eqref{eq:dimUR} we conclude.
\end{proof}

\subsection{Degeneracies of the torus $(Z/Z')(1)$}\label{subsection:Z/Z'(1)} As seen in Subsection \ref{subsection:BZ}, $Z(1)$ is the smallest sub-torus of $(X^\vee \otimes Y)(1)$ containing $Z'(1)$ and such that $(Z/Z')(1)$ contains the point $  \pi (pr_*\tR)$ constructed in \eqref{piR'}. Since $\dim (Z/Z')(1) = \dim Z(1) -\dim Z'(1)$, the maximal dimension of the torus $(Z/Z')(1)$ is $ns $
and clearly it can be achieved only if the torus $Z'(1)$ is trivial. 
In general the dimension of $(Z/Z')(1)$ is governed by the $K$-rational point $ \pi (pr_*\tR)$, as we will see now.

Let again $x_1, \dots, x_{n}$ (\textit{resp}. $y^\vee_1, \dots , y^\vee_{s}$) the basis of the character group $X$ (\textit{resp.} $Y^\vee$) giving the identification of $X$ with $\ZZ^n$ (\textit{resp}. $Y^\vee$ with  $\ZZ^s$) over $\oK$ and set
\[v(x_i)=P_i \in A(\oK), \qquad v^*(y^\vee_k) =Q_k \in A^*(\oK)\]
for $ i=1, \dots,n $ and $ k=1, \dots,s. $ This gives an isomorphism over $\oK$
\begin{align*}
	(X^\vee\otimes Y)(1) \simeq \underline{\mathrm{Hom}}(X\otimes Y^\vee,\mathbb G_m) &\longrightarrow \mathbb G_m^{ns}\\
	\big(x_i\otimes y_k^\vee \mapsto t_{i,k}\big) &\longmapsto \big(t_{i,k}\big)
	.\end{align*}
Starting from the trivialization $\psi:X \times Y^\vee \to (v \times v^*)^* \cP$ underlying the 1-motive $M$, in \eqref{R} we have constructed a point $\tR \in (i^*d^* \mathcal{B})_{P,Q}$ (corresponding to the point $\Psi(1,1) \in \big(( V \times V^*)^* i^*d^* \mathcal{B} \big)_{1,1}),$ which has furnished by push-down the point $pr_* \tR \in (pr_*i^*d^* \mathcal{B})_{P,Q} $ (corresponding to the point $pr_*\Psi(1,1) \in \big(( V \times V^*)^* pr_* i^*d^* \mathcal{B} \big)_{1,1})$ . The torus $(Z/Z')(1)$
is the smallest sub-torus of $(X^\vee \otimes Y/Z')(1)$ which contains the point 
\[ \pi (pr_*\tR)= \pi \Big( {pr}_* \big(\psi(x_i,y^\vee_k)\big)_{i= 1, \dots, n \atop k= 1, \dots, s } \Big)\]
where $\pi: pr_* i^*d^* \mathcal{B} \to (X^\vee \otimes Y/Z')(1)$ denotes the projection on the second factor of the trivial torsor $ pr_* i^*d^* \mathcal{B}$ (see Lemma \ref{lemma:pr*i*d*B-trivial}).

\medskip

The pull-back $\exp_B^* i^*d^* \mathcal{B}$ of the $X^\vee \otimes Y(1)$-torsor $i^*d^* \mathcal{B}$, introduced in \eqref{i^*d^*b}, is the trivial $X^\vee \otimes Y(1)$-torsor 
$\Lie B_\CC \times ( X^\vee \otimes Y)(1).$
Consider two logarithms $(p,q)$ and $(p',q')$  of the point $(P,Q) \in B$. The pull-backs 
 $(p,q,t)$ and $(p',q',t')$
  of the point $\tR \in  (i^*d^* \mathcal{B})_{P,Q} $ in the two fibres $(\exp_B^* i^*d^* \mathcal{B})_{p,q}$ and $(\exp_B^* i^*d^* \mathcal{B})_{p',q'}$  are such that $t (t')^{-1}$ is a value of the factor of automorphy of $i^*d^* \mathcal{B}$, which belongs to the torus $Z'(1)$ as we have showed in Lemma \ref{lemma:pr*i*d*B-trivial}. 
 The pull-back of the point  $ pr_* \tR \in  (pr_*i^*d^* \mathcal{B})_{P,Q} $ is the  point
 \begin{equation}\label{eq:exp_B^*}
 \big(p,q,t \; \mathrm{mod} \, Z'(1) \big) \in (\exp_B^*pr_* i^*d^* \mathcal{B})_{p,q}=  \Big(\Lie B_\CC \times  (X^\vee \otimes Y/Z')(1)\Big)_{p,q}
 \end{equation} 
 and therefore it doesn't depend on the choice of the logarithms of $(P,Q)$. Hence
 \begin{equation*}
 	 pr_* \tR = (P,Q,t \; \mathrm{mod} \, Z'(1) ) \in B \times (X^\vee\otimes Y/Z')(1)
 \end{equation*}
  and in particular
 \begin{equation*}
 \pi (pr_* \tR) = \pi \Big( {pr}_* \big(\psi(x_i,y^\vee_k)\big)_{i= 1, \dots, n \atop k= 1, \dots, s } \Big) =  t \; \mathrm{mod} \, Z'(1) 
 \in (X^\vee\otimes Y/Z')(1).
 \end{equation*}

  Now pick a point $t=(t_{i,k})\in (X^\vee\otimes Y)(1) \simeq \GG_m^{ns}$, possibly defined over a finite extension of $K$, which projects via $(X^\vee\otimes Y)(1) \to (X^\vee\otimes Y/Z')(1) $ onto $  \pi (pr_* \tR)= \pi \Big( pr_*\big(\psi(x_i, y_k^\vee)\big)_{i,k}\Big)$. By Theorem \ref{teo:dimZ'(1)} the space ${Z'}^\perp_\QQ := {Z'}^\perp\otimes_{\ZZ}\QQ $ of equations of $Z'(1)$ is the kernel of the surjective map
  \[	\begin{matrix}
  	f:(X\otimes Y^\vee)_\QQ &\twoheadrightarrow &\sum_{i,k}\QQ (\beta_{i,k} + \beta_{i,k}^t) \subset \mathrm{Hom}_\QQ (B,B^*)\\
  	\hfill x_i\otimes y_k^\vee &\mapsto & \beta_{i,k} + \beta_{i,k}^t  \; . \hfill
  \end{matrix}
  \]
  The point $t$ gives rise, by restriction, to a point $R_0\in({Z'}^\perp)^\vee(1)$:
	\begin{equation}\label{pointpsi}
		\begin{matrix}
			{Z'}^\perp &\longrightarrow &\mathbb G_m\\[1mm]
			\sum_{i,k}\alpha_{i,k}x_i\otimes y_k^\vee &\longmapsto &\prod_{i,k}t_{i,k}^{\alpha_{i,k}}
		\end{matrix}
	\end{equation}
	which does not depend on the above choice of $t=(t_{i,k})$, since points in $Z'(1)$ take the value $1$ on ${Z'}^\perp$. By minimality of the torus $Z(1)$ defined in \eqref{eq:Z}, the space ${Z}^\perp_\QQ := {Z}^\perp\otimes_{\ZZ}\QQ $ of equations of $Z(1)$ is the kernel the above map \eqref{pointpsi}.
	 The dual ${Z'}^\vee \subset X \otimes Y^\vee $ of $Z'$  identifies with the quotient $(X\otimes Y^\vee)/{Z'}^\perp$ through the exact sequence
	of $\mathrm{Gal}(\oK/K)$-modules
	$${Z'}^\perp \hookrightarrow X\otimes Y^\vee \stackrel{f}{\twoheadrightarrow} {Z'}^\vee.$$
	Since $\mathrm{Gal}(\oK/K)$ acts through a finite quotient on these modules, we can turn a supplement in $ (X\otimes Y^\vee)_\QQ$ of the $\QQ$-vector space ${Z'}^\perp_{\mathbb Q}$ into a $\mathrm{Gal}(\oK/K)$-module, getting a $\mathrm{Gal}(\oK/K)$-invariant decomposition
	$$(X\otimes Y^\vee)_{\mathbb Q} = {Z'}^\perp_{\mathbb Q}\oplus {Z'}^\vee_{\mathbb Q} \simeq \ker(f)\oplus\mathrm{im}(f),$$
	with $\ker(f)$ the space of relations between the $ (\beta_{i,k} + \beta_{i,k}^t)$'s in $\mathrm{Hom}_\QQ(B,B^*)$ and $\mathrm{im}(f) $ the space $ \sum_{i,k}\mathbb Q (\beta_{i,k} + \beta_{i,k}^t) \simeq {Z'}^\vee_{\mathbb Q}$.
	Since $f$ is surjective, its dual map $f^\vee$ is injective, and so we have the dual exact sequence of $ \mathrm{Gal}(\oK/K)$-modules
	$$Z' \stackrel{f^\vee}{\hookrightarrow} X^\vee\otimes Y \twoheadrightarrow ({Z'}^\perp)^\vee$$
	which, as above, implies a $\mathrm{Gal}(\oK/K)$-invariant decomposition of the $\mathbb Q$-vector space generated by the co-characters of the torus $(X^\vee\otimes Y)(1)$:
	$$(X^\vee\otimes Y)_{\mathbb Q} = Z'_{\mathbb Q}\oplus ({Z'}^\perp)^\vee_{\mathbb Q} \simeq \mathrm{im}(f^\vee) \oplus \mathrm{coker}(f^\vee).$$
	The latter shows that the torus $(X^\vee\otimes Y)(1)$ is the direct product of its sub-torii $Z'(1)$ and $({Z'}^\perp)^\vee(1)$:
	\[(X^\vee\otimes Y)(1) = ({Z'}^\perp)^\vee(1) \times  Z'(1)  \]
	and, by construction, $R_0$ which is the projection of the point $t \in (X^\vee\otimes Y)(1)$ into the first factor, corresponds to the point 
	$\pi\Big( pr_*\big(\psi(x_i, y_k^\vee)\big)_{i,k}\Big)$
	 via the isomorphism $({Z'}^\perp)^\vee(1) \simeq (X^\vee\otimes Y/Z')(1).$
 In particular  $Z'(1) \cap ({Z'}^\perp)^\vee(1) =\{1\}.$
	 The torus $({Z'}^\perp)^\vee(1)$ is the intersection in $(X^\vee\otimes Y)(1)$ of the kernels of the characters in ${Z'}^\vee$.

	Define $Z''(1)$ as the smallest sub-torus of $({Z'}^\perp)^\vee(1)$ which contains the point $R_0$ defined in \eqref{pointpsi} or a multiple of it that we still denote $R_0$: in particular $Z''^\vee$ is contained in ${Z'}^\perp$ and the restriction of the point $R_0$ to $Z''^\vee$ has trivial kernel by minimality of $Z''(1)$.
	The torus $Z'(1)Z''(1)$  is the smallest sub-torus of $(X^\vee\otimes Y)(1)$ containing $Z'(1)$ and the point $R_0$. Therefore by \eqref{eq:Z}
	\begin{equation}\label{Z''}
		Z(1)=Z'(1)Z''(1) 
	\end{equation}
	  and in particular the dimension of $(Z/Z')(1) $ is equal to the dimension of $ Z''(1)$.  Since the restriction of the point $R_0$ to $Z''^\vee$ has trivial kernel, $Z''^\vee_\QQ$ is isomorphic to the image of \eqref{pointpsi} tensorized with $\QQ$, which in turn is isomorphic to the $\QQ$-vector subspace of  $\CC/2\mathrm i\pi\QQ$ generated by the logarithms of the coordinates of $R_0$ modulo $2\mathrm i\pi\QQ.$ We have then proved the following

	\begin{theorem}\label{teo:dimZ/Z'}
		The space ${Z}^\perp_\QQ$ of equations of $Z(1)$ is the kernel of the map \eqref{pointpsi}. Moreover the $\QQ$-vector subspace
	 $(Z/Z')_\QQ$ is isomorphic to the $\QQ$-vector subspace of  $\CC/2\mathrm i\pi\QQ$ generated by the logarithms $\sum_{i,k}\alpha_{i,k}\log(t_{i,k})$, with $\sum_{i,k}\alpha_{i,k} x_i\otimes y_k^\vee$ running over ${Z'}^\perp$ and $(t_{i,k})$ any point of $(X^\vee\otimes Y)(1)$ projecting onto $ \pi \Big( pr_*\big(\psi(x_i, y_k^\vee)\big)_{i,k} \Big)$.
	\end{theorem}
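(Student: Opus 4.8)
The plan is to assemble the structural facts established just before the statement into the two claimed identifications. I rely on: the direct-product decomposition $(X^\vee\otimes Y)(1)=({Z'}^\perp)^\vee(1)\times Z'(1)$ coming from Theorem~\ref{teo:dimZ'(1)}, together with the identification of $\pi(pr_*\tR)$ with the point $R_0$ of~\eqref{pointpsi} in the first factor; and the equality $Z(1)=Z'(1)Z''(1)$ of~\eqref{Z''}, which is a \emph{direct} product because $Z''(1)\subset({Z'}^\perp)^\vee(1)$ meets $Z'(1)$ trivially. Throughout one works up to torsion, i.e.\ after $\otimes_\ZZ\QQ$ on (co)characters and, on the logarithm side, modulo $2\mathrm i\pi\QQ$; this is forced by the fact that ``smallest subtorus through a point'' only pins down characters up to finite index.

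\emph{First assertion.} By its definition~\eqref{eq:Z}, $Z(1)$ is the smallest subtorus of $(X^\vee\otimes Y)(1)$ containing $Z'(1)$ and $\pi(pr_*\tR)$; so a character $\chi=\sum_{i,k}\alpha_{i,k}\,x_i\otimes y_k^\vee$ lies in the space of equations $Z^\perp_\QQ$ exactly when it kills $Z'(1)$ --- i.e.\ $\chi\in{Z'}^\perp_\QQ$ --- and annihilates $\pi(pr_*\tR)$ (up to torsion). For $\chi\in{Z'}^\perp$ the value of $\chi$ at $\pi(pr_*\tR)$ is independent of the chosen lift $(t_{i,k})\in(X^\vee\otimes Y)(1)$, since two lifts differ by an element of $Z'(1)$ on which $\chi$ is trivial, and it equals $\prod_{i,k}t_{i,k}^{\alpha_{i,k}}$ --- which is precisely~\eqref{pointpsi}. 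Hence $Z^\perp_\QQ$ is the kernel of~\eqref{pointpsi} (read modulo torsion), which is the first assertion.

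\emph{Second assertion.} Since $Z(1)=Z'(1)Z''(1)$ is a direct product, the quotient map $Z(1)\to Z(1)/Z'(1)$ restricts to an isomorphism on $Z''(1)$; thus $(Z/Z')_\QQ$ has dimension $\dim Z''(1)$, the same as $Z''^\vee_\QQ$, so it suffices to produce an isomorphism $Z''^\vee_\QQ\cong(\text{the claimed logarithm space})$. Restricting characters from $({Z'}^\perp)^\vee(1)$ to its subtorus $Z''(1)$ gives a surjection ${Z'}^\perp\twoheadrightarrow Z''^\vee$ whose kernel --- by minimality of $Z''(1)$, the smallest subtorus through $R_0$ --- consists, modulo torsion, of the $\chi$ annihilating $R_0$, which by the first assertion rationalizes to $Z^\perp_\QQ$; so $Z''^\vee_\QQ\cong{Z'}^\perp_\QQ/Z^\perp_\QQ$. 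Now the additive map ${Z'}^\perp\to\CC/2\mathrm i\pi\QQ$, $\sum_{i,k}\alpha_{i,k}\,x_i\otimes y_k^\vee\mapsto\sum_{i,k}\alpha_{i,k}\log(t_{i,k})\bmod 2\mathrm i\pi\QQ$, is well defined (a change of branches alters each $\log(t_{i,k})$ by an element of $2\mathrm i\pi\ZZ$, hence the sum by an element of $2\mathrm i\pi\ZZ$), and its kernel --- via $\exp$ --- is $\{\chi\in{Z'}^\perp:\chi(t)\text{ torsion}\}$, whose rationalization is again $Z^\perp_\QQ$. Therefore it induces an isomorphism from ${Z'}^\perp_\QQ/Z^\perp_\QQ$ onto the $\QQ$-vector subspace of $\CC/2\mathrm i\pi\QQ$ generated by the $\sum_{i,k}\alpha_{i,k}\log(t_{i,k})$ with $\sum_{i,k}\alpha_{i,k}\,x_i\otimes y_k^\vee$ running over ${Z'}^\perp$; composing with the previous isomorphisms gives the claim.

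The computations are routine; the only point requiring care is the consistent handling of torsion --- the distinction between the integral kernels and images of~\eqref{pointpsi} and their saturations --- which is why the statement is phrased after $\otimes_\ZZ\QQ$ (resp.\ modulo $2\mathrm i\pi\QQ$). Beyond that bookkeeping there is no genuine obstacle, the substantive geometry (the splitting of $(X^\vee\otimes Y)(1)$, the description of $Z'(1)$ via the isogenies $\beta_{i,k}$, and the identification of $\pi(pr_*\tR)$ with $R_0$) being already in place.
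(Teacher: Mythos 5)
Your argument is correct and follows essentially the same route as the paper: the first assertion is obtained by minimality of $Z(1)$ together with the well-definedness of $\chi(\pi(pr_*\tR))$ for $\chi\in{Z'}^\perp$, and the second by the direct-product decomposition $Z(1)=Z'(1)\times Z''(1)$, the identification $\dim Z/Z'(1)=\dim Z''(1)$, and then reading off $Z''^\vee_\QQ$ as the image of the map \eqref{pointpsi} (equivalently, as ${Z'}^\perp_\QQ/Z^\perp_\QQ$), which is in turn identified with the span of the logarithms modulo $2\mathrm i\pi\QQ$. Your extra care with saturation/torsion is a clean way of phrasing what the paper handles implicitly by working with $\otimes_\ZZ\QQ$, and it introduces no divergence from the paper's argument.
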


\begin{corollary}\label{prop:dimZ/Z'}
	If $Z'(1)=\{1\},$ the dimension of the torus $Z(1)$ is equal to the dimension of the  $\QQ$-sub-vector space of $\CC / 2 i \pi \QQ$ generated by the logarithms of the components of $\pi \Big(\big(\psi(x_i,y^\vee_k)\big)_{i= 1, \dots, n \atop k= 1, \dots, s }\Big)$, where  $x_1, \dots, x_{n}$ and $y^\vee_1, \dots , y^\vee_{s}$ are generators of the character groups $X$ and $Y^\vee$ respectively.
\end{corollary}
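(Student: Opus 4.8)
The plan is to obtain this corollary as the special case $Z'(1)=\{1\}$ (equivalently $Z'=0$) of Theorem \ref{teo:dimZ/Z'}. First I would record the simplifications that occur when $Z'=0$. By Lemma \ref{lemma:pr*i*d*B-trivial} the factor of automorphy of the torsor $i^*d^*\mathcal{B}$ then takes only the value $1$, so $i^*d^*\mathcal{B}$ is already the trivial $(X^\vee\otimes Y)(1)$-torsor $B\times(X^\vee\otimes Y)(1)$ over $B$; moreover the projection $pr:(X^\vee\otimes Y)(1)\twoheadrightarrow (X^\vee\otimes Y/Z')(1)$ is the identity, so $pr_*\tR=\tR$, and under the identification $(X^\vee\otimes Y)(1)\simeq\GG_m^{ns}$ determined by the chosen generators $x_1,\dots,x_n$, $y_1^\vee,\dots,y_s^\vee$, the point $\pi(pr_*\tR)=\pi(\tR)$ is exactly $\big(\psi(x_i,y_k^\vee)\big)_{i,k}$. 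In particular there is no ambiguity in the lift $t=(t_{i,k})$ appearing in Theorem \ref{teo:dimZ/Z'}: one simply takes $t_{i,k}=\psi(x_i,y_k^\vee)$.

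Next I would observe that $Z'=0$ also forces ${Z'}^\perp=X\otimes Y^\vee$, so that in Theorem \ref{teo:dimZ/Z'} the element $\sum_{i,k}\alpha_{i,k}x_i\otimes y_k^\vee$ ranges over the whole lattice $X\otimes Y^\vee$; and since $\dim Z(1)=\dim Z/Z'(1)$ when $Z'=0$, that theorem identifies $Z_\QQ\simeq(Z/Z')_\QQ$ with the $\QQ$-vector subspace of $\CC/2\mathrm{i}\pi\QQ$ generated by all elements $\sum_{i,k}\alpha_{i,k}\log(t_{i,k})$, the $(\alpha_{i,k})$ ranging over $\ZZ^{ns}$ (equivalently over $\QQ^{ns}$).

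Finally, this last $\QQ$-vector space is precisely the span of the individual logarithms $\log(t_{i,k})=\log\big(\psi(x_i,y_k^\vee)\big)$: each combination $\sum_{i,k}\alpha_{i,k}\log(t_{i,k})$ lies in that span, and conversely each $\log(t_{i,k})$ is recovered by choosing $(\alpha_{j,\ell})$ supported at $(i,k)$. Comparing dimensions then yields the stated equality. I expect no real obstacle here: the entire content is carried by Theorem \ref{teo:dimZ/Z'}, and the only point requiring a moment's care is the bookkeeping identification, when $pr$ is the identity, of $\pi(pr_*\tR)$ with $\big(\psi(x_i,y_k^\vee)\big)_{i,k}$ — which in any case was already noted in the discussion preceding the corollary.
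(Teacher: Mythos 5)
Your argument is correct and is the specialization of Theorem~\ref{teo:dimZ/Z'} to $Z'=0$ that the paper clearly intends, given that the corollary immediately follows the theorem with no separate proof. You correctly record that $Z'=0$ forces ${Z'}^\perp=X\otimes Y^\vee$, that $pr$ is the identity so $pr_*\tR=\tR$, that by Lemma~\ref{lemma:pr*i*d*B-trivial} the torsor $i^*d^*\mathcal{B}$ is then globally trivial over $B$ so the projection $\pi$ and hence the coordinates $t_{i,k}$ are well defined independently of the chosen logarithm of $(P,Q)$, and that as $(\alpha_{i,k})$ ranges over all of $\QQ^{ns}$ the span of the combinations $\sum_{i,k}\alpha_{i,k}\log(t_{i,k})$ in $\CC/2\mathrm{i}\pi\QQ$ equals the span of the individual $\log(t_{i,k})$. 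One small notational point: you write that $\pi(\tR)$ ``is exactly'' $\big(\psi(x_i,y_k^\vee)\big)_{i,k}$, but strictly speaking $\big(\psi(x_i,y_k^\vee)\big)_{i,k}=\Psi(1,1)$ is an element of a fibre of the torsor while $t=(t_{i,k})$ is its image under $\pi$ after trivialization; the corollary's own notation $\pi\big(\cdots\big)$ already records this, so the content is unaffected.
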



\section{Periods matrices and motivic Galois action}\label{lastSection}

\subsection{Periods of abelian varieties}\label{periodsonabelianvarieties} Let $A$ be an abelian variety defined over $ K\subseteq \CC.$ 
Recall the notations of Subsection \ref{abelianvariety}.
  We fix an ordered base $\lambda_1,\dots,\lambda_{2g}$ of the lattice $\Lambda\simeq \mathrm H_1(A(\CC),\mathbb Z)$ which is symplectic for the alternating form $\mathrm{Im}(H)$. 
  The invariant differential forms $\omega_1=dz_1,\dots,\omega_g=dz_g$ form a base of differentials of the first kind on $A$. The invariant differential forms $\eta_1,\dots,\eta_g$, defined by the polarisation, give a base of differentials of the second kind modulo holomorphic and exact ones on $A$. We use the ordered base $\omega_1,\dots,\omega_g,\eta_1,\dots,\eta_g$ of $\mathrm H^1_{\mathrm{dR}}(A)$. Then, the base of $\mathrm H_1(A^*(\CC),\mathbb Z)$ dual to $\lambda_1,\dots,\lambda_{2g}$ with respect to $\mathrm{Im}(H)$ is $-\lambda_{g+1},\dots,-\lambda_{2g},\lambda_1,\dots,\lambda_g$ and the dual base of $\mathrm H^1_{\mathrm{dR}}(A^*)$ is the ordered base $-\eta_1,\dots,-\eta_g,\omega_1,\dots,\omega_g$. Define the $g\times g$ matrices
$$\begin{matrix}\Omega_1 = \left(\begin{matrix}\int_z^{\lambda_i+z}\omega_j\end{matrix}\right)_{\stackrel{i=1,\dots,g}{\scriptscriptstyle j=1,\dots,g}} \hfill&{\cyr I}_1 = \left(\begin{matrix}\int_z^{\lambda_i+z}\eta_j\end{matrix}\right)_{\stackrel{i=1,\dots,g}{\scriptscriptstyle j=1,\dots,g}}\hfill\\
\Omega_2 = \left(\begin{matrix}\int_z^{\lambda_{g+i}+z}\omega_j\end{matrix}\right)_{\stackrel{i=1,\dots,g\hfill}{\scriptscriptstyle j=1,\dots,g\hfill}} \hfill &\cyr I_2 = \left(\begin{matrix}\int_z^{\lambda_{g+i}+z}\eta_j\end{matrix}\right)_{\stackrel{i=1,\dots,g\hfill}{\scriptscriptstyle j=1,\dots,g\hfill}}\end{matrix}$$
so that the periods matrices of $A$ and $A^*$ with respect to the above chosen bases are
\begin{equation}\label{periodsmatricesA}
 \Pi_A = \left(\begin{matrix}\Omega_1 &\cyr I_1\\\Omega_2 &\cyr I_2\end{matrix}\right) \qquad \mathrm{and} \qquad \Pi_{A^*} = \left(\begin{matrix}\cyr I_2 &-\Omega_2\\ -\cyr I_1 &\Omega_1\end{matrix}\right)
\end{equation}
Thanks to Lemma~\ref{lemcroixperiodquasi} we have the following relations
$$\left(\begin{matrix}\Omega_1 &\cyr I_1\\
\Omega_2 &\cyr I_2\end{matrix}\right)\left(\begin{matrix} -\cyr I_1^t &-\cyr I_2^t\\ \Omega_1^t &\Omega_2^t\end{matrix}\right) = \left(\begin{matrix} 
\cyr I_1\Omega_1^t - \Omega_1\cyr I_1^t
&\cyr I_1\Omega_2^t - \Omega_1\cyr I_2^t\\
\cyr I_2\Omega_1^t - \Omega_2\cyr I_1^t
&\cyr I_2\Omega_2^t - \Omega_2\cyr I_2^t
\end{matrix}\right) = -2\mathrm i\pi \left(\begin{matrix}0 &\mathrm{Id}_g\\ -\mathrm{Id}_g &0\end{matrix}\right) = -2\mathrm i\pi J
$$
where $J$ is the matrix of the alternating form $\mathrm{Im}(H)$ associated to the principal polarization $H$ in the symplectic base of $\Lambda$ (see \cite[\S 3.1, page 46]{BL} and \cite[\S 23.2.2, page 204]{A04}). Two little computations then show that
$	\Pi_{A^*} =-J\Pi_AJ $ and $
\Pi_{A^*}  = (2\mathrm  i\pi\Pi_A^{-1})^t .$
 In particular, $\det(\Pi_{A^*})\det(\Pi_A)=(2\mathrm i\pi)^{2g}$ and $\det(\Pi_A)=\det(\Pi_{A^*})$ and thus $\det(\Pi_A)=\pm(2\mathrm i\pi)^g$, which reduces to Legendre relation if $g=1$.

	
	\medskip
	
	According to \cite[\S 1.11]{D90}, denote by $\underline{\mathrm{Isom}}^\otimes(  \HH_\dR, \HH_\HH ) $ the functor over the $K$-schemes, which associates to any $f: T \to \mathrm{Spec}(K)$ the set of isomorphisms of $\otimes$-functors  $f^* \HH_\dR \to f^* \HH_\HH $ , where $  \HH_\dR$ and $ \HH_\HH$ are respectively the fibre functors ``De Rham realization" and ``Hodge realization".
	 By \cite[Proposition 6.6 and \S 2.7)]{D90}, $\underline{\mathrm{Isom}}^\otimes(  \HH_\dR, \HH_\HH ) $ is representable by an affine $K$-scheme.
	 Denote by $<A>^\otimes$ the tannakian sub-category generated by the abelian variety $A$ in a suitable tannakian category of motives. The scheme 
	 $\underline{\mathrm{Isom}}^\otimes(  {\HH_\dR}_{|<A>^\otimes }, {\HH_\HH}_{|<A>^\otimes } ) $ can be identified as a sub-scheme of the scheme of isomorphisms of the two vector spaces  $ \HH_\dR (A) \otimes_K \CC$ and $\HH_\HH (A) \otimes_\QQ \CC.$ Following \cite[page 73, 82-83]{A04}, via this identification, the \textit{periods isomorphism} $ \varpi_A : \HH_\dR (A) \otimes_K \CC \rightarrow \HH_\HH (A) \otimes_\QQ \CC$ given by the integration of differentials forms corresponds to a  
	  $\CC$-rational point in the scheme
	 $
	 \underline{\mathrm{Isom}}^\otimes(  {\HH_\dR}_{|<A>^\otimes }, {\HH_\HH}_{|<A>^\otimes } )(\CC) .$ After choosing bases, this periods isomorphism $ \varpi_A $ gives a matrix called a \textit{periods matrix} of $A$, whose coefficients are called \textit{periods} of $A$.

	  Since $ \HH_\dR (A) = \oplus_{r=0}^{2g}\HH^r_{\dR}(A) $ 
	and $\HH_\HH (A) = \oplus_{r=0}^{2g}\HH_r(A(\CC),\mathbb Q) $ we define the \textit{$r$-periods of $A$} (for $r=0, \dots,2g$) as the coefficients of the matrix which represents (with respect to $K$-bases) the isomorphism between the cohomology groups $\HH_\dR^r(A)$ and $\HH_r(A(\CC),\mathbb Q) \otimes_\QQ K$ which is given by the integration of differentials forms. In particular the coefficients of the matrix $\Pi_A$  in \eqref{periodsmatricesA} are the 1-periods of $A$.

	\begin{example}\label{legendreRelation}
		In the case of an elliptic curve, the only 0-period is 1, the four 1-periods are the classical periods $\omega_1, \omega_2, \eta_1$ and $\eta_2$, and the only 2-period is $ 2 \mathrm i \pi$ (see \cite[7.1.6.1 Exemple, page 73]{A04}). Observe that Legendre relation expresses the 2-period $ 2  \mathrm i \pi$ as the value of the polynomial $X_1Y_2-X_2Y_1$ evaluated in 1-periods.
	\end{example}
	
	 As we showed, the only $2g$-period $ (2 \mathrm i \pi )^g$ of a $g$-dimensional abelian variety is equal to $\pm \det(\Pi_A)$. In order to study these periods one should consider the whole De Rham and Hodge cohomology groups of $A$ and of its powers, and not just the  $\HH^1_{\dR}(A)$ and the $\HH_1(A(\CC),\mathbb Z)$.
 But, according to \cite[bottom of page 25]{D82} we have for $ r=0, \dots ,2g$ 
	\begin{equation}\label{kunneth}
		\begin{aligned}
			& 	\HH^r_{\dR}(A)= \bigwedge^r \HH^1_{\dR}(A) \qquad \mathrm{and} \qquad  \HH_r(A(\CC),\mathbb Z)= \bigwedge^r \HH_1(A(\CC),\mathbb Z)  \\
			&	\HH^r_{\dR}(A^n)= \bigoplus_{i_1+\dots +i_n=r} \HH^{i_1}_{\dR}(A) \otimes \dots \otimes  \HH^{i_n}_{\dR}(A) \\
			& \HH_r(A^n(\CC),\mathbb Z)= \bigoplus_{i_1+\dots +i_n=r}\HH_{i_1}(A(\CC),\mathbb Z) \otimes \dots \otimes \HH_{i_n}(A(\CC),\mathbb Z) ,\\
		\end{aligned}
	\end{equation}
which entails
 
	  \begin{lemma}\label{lemma:r-periods}
	 	 The $r$-periods of an abelian variety are the values of a degree $r$ polynomial evaluated in 1-periods.
	 \end{lemma}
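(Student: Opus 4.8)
The plan is to extract everything from the Künneth identification \eqref{kunneth}, which presents $\HH^r_{\dR}(A)$ as $\bigwedge^r \HH^1_{\dR}(A)$ and $\HH_r(A(\CC),\QQ)$ as $\bigwedge^r \HH_1(A(\CC),\QQ)$. First I would record that the period pairing defined by integrating differential forms along singular cycles is multiplicative: the cup product on de Rham cohomology and the product on singular homology induced by the diagonal are intertwined by the comparison isomorphism (this is the standard compatibility of the de Rham--Betti comparison with Künneth and the diagonal, used already to obtain \eqref{kunneth} from \cite{D82}, and it is precisely the statement that the de Rham and Hodge realizations are $\otimes$-functors). Since $\bigwedge^r$ is a direct summand (a sub-quotient) of $\otimes^r$, it follows that the $r$-period isomorphism $\HH^r_{\dR}(A)\to \HH_r(A(\CC),\QQ)\otimes_\QQ K$ is exactly $\bigwedge^r$ of the $1$-period isomorphism $\HH^1_{\dR}(A)\to \HH_1(A(\CC),\QQ)\otimes_\QQ K$ whose matrix, in the bases fixed in Subsection~\ref{periodsonabelianvarieties}, is $\Pi_A$.

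Next I would translate this into matrices. With the $K$-basis $\omega_1,\dots,\omega_g,\eta_1,\dots,\eta_g$ of $\HH^1_{\dR}(A)$ and the $\QQ$-basis $\lambda_1,\dots,\lambda_{2g}$ of $\HH_1(A(\CC),\QQ)$, the $r$-fold wedges of these vectors form $K$- and $\QQ$-bases of $\HH^r_{\dR}(A)$ and $\HH_r(A(\CC),\QQ)$. In such bases the matrix of $\bigwedge^r f$, for a linear map $f$ with matrix $\Pi$, is the $r$-th compound matrix of $\Pi$, whose entries are the $r\times r$ minors of $\Pi$. Applying this with $\Pi=\Pi_A$, every $r$-period of $A$ is an $r\times r$ minor of $\Pi_A$, hence the value of an $r\times r$ determinant — a homogeneous polynomial of degree $r$ with coefficients in $\{0,\pm 1\}$, in particular with rational coefficients — evaluated at the entries of $\Pi_A$, that is, at the $1$-periods. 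This gives the lemma. (As a sanity check, for $r=0$ this returns the $0$-period $1$, and for $r=2g$ it returns $\det(\Pi_A)=\pm(2\mathrm i\pi)^g$, consistently with Example~\ref{legendreRelation} and the discussion preceding the statement.)

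The only genuinely delicate point is the first step: one must know that integration is compatible with the exterior-power structure on both sides, so that $\bigwedge^r$ of the $1$-period map computes the $r$-period map on the nose rather than merely up to some automorphism. I would deduce this from \eqref{kunneth} itself together with the compatibility of the comparison isomorphism with cup products (equivalently with the Künneth isomorphism and with pullback along the diagonal $A\to A\times A$), which is part of the functoriality recalled from \cite{D82}. The sign ambiguities in the compound matrix, coming from the choice of orderings of the wedge bases, are harmless, since replacing a polynomial by its negative does not affect the assertion.
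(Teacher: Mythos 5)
Your argument is correct and follows exactly the route the paper gestures at: the paper offers no explicit proof, merely the phrase ``by equalities \eqref{kunneth}'', and your proposal fills this in by observing that the Künneth/wedge identification makes the $r$-period map equal to $\bigwedge^r$ of the $1$-period map, so its matrix entries are $r\times r$ minors of $\Pi_A$, i.e.\ degree-$r$ polynomials in the $1$-periods. The only remark worth adding is that your ``genuinely delicate point'' (compatibility of integration with the exterior-power structure) is indeed the content silently imported from \cite[D82]{D82} in equations \eqref{kunneth}, so you have located precisely where the argument rests.
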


\subsection{Periods of 1-motives}\label{periodsonemotives}

Let $M=[u: X \to G]$ be a $1$-motive defined over $K \subseteq \CC$, with $G$ a semi-abelian variety $0 \to Y(1) \to G\stackrel{\pi}{\to}A \to 0$. Recall that the choice of bases
 $x_1, \dots, x_{n}$ of the character group $X$ and $y^\vee_1, \dots , y^\vee_{s}$ of the character group $Y^\vee$  determines the points $P_1,\dots,P_n\in A(\oK)$, $Q_1,\dots,Q_s\in A^*(\oK)$, $R_1,\dots,R_n \in G(\oK)$ and an isomorphism $( y^\vee_1, \dots , y^\vee_{s}) :Y(1) \to \GG_m^s .$
In \cite[\S 2.3]{BPSS} to each point $Q_k,$ for $k=1, \dots,s,$ we have associated the invariant differential form on $G$
 \[\xi_{Q_k}^G = \tilde{\xi}_{Q_k} - \pi^* \xi_{Q_k},\]
 where $\tilde{\xi}_{Q_k}$ is the pull-back of the invariant differential form $\frac{dZ_k}{Z_k}$ of $\GG_m$ via the rational map $ G \cong A \times \GG_m^s \stackrel{pr_k}{\to} \GG_m,$ and $ \pi^* \xi_{Q_k}$ is the pull back via $\pi: G \to A$ of the differential form of the third kind $\xi_{Q_k}$ on $A$ associated to the point $Q_k$. Recall that the pull-back via the abelian exponential of $\xi_{Q_k}$ is the differential form $d \log (F_{q_k}(z)) ,$ where $F_{q_k}(z)$ is the function introduced in \eqref{Fq(z)}.
 In this paper we denote by $\xi_{Q_k}^G$ the opposite invariant differential form $-\tilde{\xi}_{Q_k} + \pi^* \xi_{Q_k}$ as it is more usual in the literature (see \cite[(1.11) and Proposition 2.3]{B19} \cite[\S 2]{Br08}).

 We complete the bases of $\mathrm H_1(A(\CC),\mathbb Z)$ and $\mathrm H^1_{\mathrm{dR}}(A)$ in ordered bases of the Hodge realization  $\T_\HH(M)$ and the De Rham realization $\T_\dR(M)$ of $M$ as:
\begin{equation}\label{basesforM}\begin{aligned}
&([O,R_1],\dots,[O,R_n],\gamma_1,\dots,\gamma_{2g},\delta_1,\dots,\delta_s)\\[3mm]
&(\xi_{Q_1}^G,\dots,\xi_{Q_s}^G,\pi^*\omega_1,\dots,\pi^*\omega_g,\pi^*\eta_1,\dots,\pi^*\eta_g,df_1,\dots,df_n)
\end{aligned}\end{equation}
where $df_i$ is an exact form such that $f_i(R_j)-f_i(O)=\delta_{i,j}$ for $i,j=1, \dots, n$, $\xi_{Q_k}^G$ is the invariant differential form on $G$  cited above,
 $[O,R_i]$ is a path in $G$ from the origin $O$ to the point $R_i$ which lifts 
to a path\footnote{As usual we will use small letters for abelian logarithms of points on $A(\CC),$ $A^*(\CC)$ or $G(\CC)$ which are written with capital letters, that is $\exp_{A}(p)=P$, ...} $[0,r_i]$ in $\mathrm{Lie}(G)\simeq\mathrm{Lie}(A)\times\mathrm{Lie}(\mathbb G_m)^s$ from the origin to a logarithm $r_i$ of $R_i=(P_i,e^{\ell_{i,1}},\dots,e^{\ell_{i,s}})$, $\gamma_1,\dots,\gamma_{2g}$ are loops that lift in $G$ the loops
$\lambda_1,\dots,\lambda_{2g}$ of $A$, and
  $\delta_k$ is a loop in $\GG_m^s $ such that $\int_{\delta_k} \xi_{Q_l}^G= 2 \mathrm i \pi \delta_{k,l}$ for $k,l=1, \dots,s$. The dual bases of $\T_\HH(M^*)$ and $\T_\dR(M^*)$ are
\begin{equation}\label{basesforM*}\begin{aligned}
&(\delta_1^*,\dots,\delta_n^*,-\gamma_{g+1},\dots,-\gamma_{2g},\gamma_1,\dots,\gamma_g,-[O,R_1^*],\dots, -[O,R_s^*])\\[3mm]
&(-df_1^*,\dots,-df_s^*,-\pi^*\eta_1,\dots,-\pi^*\eta_g,\pi^*\omega_1,\dots,\pi^*\omega_g,\xi_{P_1}^{G^*},\dots, \xi_{P_n}^{G^*})
\end{aligned}\end{equation}
with $[O,R_k^*]$ a path in $G^*$ from the origin $O$ to the point $R_k^*$ which lifts 
in a path $[0,r_k^*]$ in $\mathrm{Lie}(G^*)\simeq\mathrm{Lie}(A^*)\times\mathrm{Lie}(\mathbb G_m)^n$ from the origin to a logarithm  $r_k^*$ of $R_k^*=(Q_k,e^{\ell_{k,1}},\dots,e^{\ell_{k,n}}).$

The periods matrices of $M$ and $M^*$ with respect to these dual bases take the shape:
\begin{equation}\label{periodsmatrices}
\Pi_M=\left(\begin{matrix}
\Xi_R &\widetilde{\log}_A(P) &\mathrm {Id}_n\\
\Upsilon_Q &\Pi_A &0\\
2\mathrm i\pi\mathrm{Id}_s &0 &0
\end{matrix}\right)
\qquad
\Pi_{M^*}=\left(\begin{matrix}
0 &0 &2\mathrm i\pi\mathrm {Id}_n\\
0 &\Pi_{A^*} &\Upsilon_P\\
\mathrm{Id}_s &\widetilde{\log}_{A^*}(Q) &\Xi_{R^*}
\end{matrix}\right)
\end{equation}
with
$$\begin{matrix}\widetilde{\log}_A(P) = \left(\begin{matrix}\int_{z}^{p_i+z}\omega_j,\int_{z}^{p_i+z}\eta_j\end{matrix}\right)_{\stackrel{i=1,\dots,n}{\scriptscriptstyle j=1,\dots,g\hfill}} \enspace,\quad \widetilde{\log}_{A^*}(Q) = \left(\begin{matrix}\int_{z}^{q_k+z}\eta_j,-\int_{z}^{q_k+z}\omega_j\end{matrix}\right)_{\stackrel{k=1,\dots,s}{\scriptscriptstyle j=1,\dots,g\hfill}} \enspace,\\[6mm]
\Upsilon_Q = \left(\begin{matrix}\int_{\gamma_i}\xi^G_{Q_k}\end{matrix}\right)_{\stackrel{i=1,\dots,2g}{\scriptscriptstyle k=1,\dots,s\hfill}} \enspace,\quad \Upsilon_P = \left(\begin{matrix} -\int_{\gamma_{g+j}}\xi_{P_i}^{G^*}\\  \int_{\gamma_{j}}\xi_{P_i}^{G^*}\end{matrix}\right)_{\stackrel{j=1,\dots,g}{\scriptscriptstyle i=1,\dots,n\hfill}} \enspace,
\end{matrix}$$
and
$$\begin{matrix} \Xi_R = \left(\begin{matrix}\int_{O}^{R_i}\xi^G_{Q_k}\end{matrix}\right)_{\stackrel{i=1,\dots,n}{\scriptscriptstyle k=1,\dots,s\hfill}} \enspace,\quad  \Xi_{R^*} = \left(\begin{matrix}-\int_{O}^{R_k^*}\xi^{G^*}_{P_i}\end{matrix}\right)_{\stackrel{k=1,\dots,s}{\scriptscriptstyle i=1,\dots,n \hfill}}
\end{matrix}\enspace.$$

Observe that $\widetilde{\log}_A(P)$ (\textit{resp}. $\widetilde{\log}_{A^*}(Q)$) is the coordinates matrix of a generalized logarithm of $P$ (\textit{resp}. $Q$) in the given base of $\mathrm{H}^1_{\mathrm{dR}}(A)$ (\textit{resp}. $\mathrm{H}^1_{\mathrm{dR}}(A^*)$) (see \cite[Definition 2.2]{BPSS}). As observed in section Notation, if $K=\oK$ the extension $G$ underlying the 1-motive $M$
is isomorphic to $G_{Q_1}\times\dots\times G_{Q_s}$, with $G_{Q_k}$ the extension of $A$ by $\GG_m$ parametrized by the point $Q_k \in A^*(K)$ for $k=1, \dots s,$ and for $i=1, \dots, n$ the point $R_i \in G(K)$ corresponds to 
the points $R_{i,k}=\psi(x_i,y_k^\vee)$ living in $G_{Q_k}(K)$ above $P_i \in A(K)$. Choosing the bases of $\T_\HH(M)$ and $\T_\dR(M)$ compatible with this decomposition, the matrix $\Xi_R$ is then $\left(\int_O^{R_{i,k}}\xi^{G_{Q_k}}_{Q_k}\right)_{i,k}$. 
By \cite[Definition 2.4]{BPSS} the coordinates matrix of a generalized logarithm of $R$ is
\[ \tlog_{G}(R) = \big( \tlog_A(P),\Xi_R \big) =
\Big( \left(\begin{matrix}\int_{z}^{p_i+z}\omega_j,\int_{z}^{p_i+z}\eta_j\end{matrix}\right)_{\stackrel{i=1,\dots,n}{\scriptscriptstyle j=1,\dots,g\hfill}}, \left(\begin{matrix}\int_O^{R_{i,k}}\xi^{G_{Q_k}}_{Q_k}\end{matrix}\right)_{\stackrel{i=1,\dots,n}{\scriptscriptstyle k=1,\dots,s\hfill}} \Big).\]

In \cite[Lemma 3.2]{B02}, we have computed the periods matrix of a 1-motive without abelian part, i.e. $A=0.$ In the following example we compute the periods matrix of a 1-motive $M=[u: \ZZ \to G]$ with $n=s=1$ and $A$ an elliptic curve (see \cite[Proposition 2.3 and (2.7)]{B19} and \cite[\S 2]{Br08}). We will study this periods matrix more thoroughly in the last Section of this paper.

\begin{example}\label{example:XiwithoutA}
	The trivialization $\psi$ underlying a 1-motive $M=[u:X\to Y(1)]$ without abelian part is just a bilinear form $\psi : X \times Y^\vee \to \GG_m$ corresponding to the group morphism $u: X \to Y(1)$. In fact, if $\Psi:X \otimes Y^\vee \to \GG_m $ is the linear arrow associated to the bilinear form $\psi$, then  $u = (\Psi \otimes id_{Y}) \circ ( id_X \otimes \delta_Y): X  = X \otimes \ZZ \to X \otimes Y^\vee \otimes Y \to Y(1)  $ and reciprocally $\Psi = ev_Y \circ (u \otimes id_{Y^\vee}) : X \otimes Y^\vee \to Y(1) \otimes Y^\vee  \to  \ZZ \otimes \GG_m = \GG_m $. Denote by $x_1,\dots, x_n$ (\textit{resp.} $ y_1^\vee,\dots, y_s^\vee$) generators of the character group $X$ (\textit{resp.} $Y^\vee$). 	With respect to the chosen base in \eqref{basesforM} we have that
	$$ \Xi_R = \left(\begin{matrix} - \log \psi (x_i,y^\vee_k)\end{matrix}\right)_{\stackrel{i=1,\dots,n}{\scriptscriptstyle k=1,\dots,s\hfill}}$$
	and $u(x_i)= (\psi (x_i,y^\vee_1), \dots, \psi (x_i,y^\vee_s))$
	(see also \cite[Lemma 3.2]{B02} where $q_{ik} :=\psi (x_i,y^\vee_k) $).
	The Cartier dual of $M$ is the 1-motive  $M^*=[u^*:Y^\vee \to X^\vee(1)],$ 
	where the group morphism $u^*$ corresponds to the bilinear form $\psi \circ \sym_{Y^\vee \times X} : Y^\vee \times X \to \GG_m$ (see \cite[(10.2.12.2)]{D74}).  In particular, $ u^*(y^\vee_k)= (\psi (x_1, y^\vee_k), \dots, \psi (x_n,y^\vee_k)).$
	With respect to the chosen base in \eqref{basesforM*} we get
	$$ \Xi_{R^*}
	= \left(\begin{matrix} \log \psi (x_i,y^\vee_k)\end{matrix}\right)_{\stackrel{k=1,\dots,s}{\scriptscriptstyle i=1,\dots,n\hfill}} 
	=- \left(\begin{matrix}  - \log  \psi (x_i,y^\vee_k)\end{matrix}\right)_{\stackrel{k=1,\dots,s}{\scriptscriptstyle i=1,\dots,n\hfill}} = -  \Xi_R^t. $$
	Hence we have the equality $ \Xi_{R^*} +\Xi_R^t =0$ that will be generalized in Lemma \ref{computperiodes}. The periods matrices of $M$ and $M^*$ are
	\begin{equation*}
		\Pi_M=\left(\begin{matrix}
			\Xi_R &\mathrm {Id}_n\\
			2\mathrm i\pi\mathrm{Id}_s  &0
		\end{matrix}\right)
		\qquad \qquad 
		\Pi_{M^*}=\left(\begin{matrix}
			0  &2\mathrm i\pi\mathrm {Id}_n\\
			\mathrm{Id}_s &\Xi_{R^*}
		\end{matrix}\right)
	\end{equation*}
\end{example}

\begin{example}\label{example:matriceMsemi-elliptique}
	Let $M=[u: \ZZ \to G]$ be a 1-motive defined over $K \subseteq \CC$, where $G$ is an extension of an elliptic curve $\cE$ by $\GG_m$ parametrized by a point  $Q \in \cE^*(\oK)$, and where the group morphism $u$
	is defined by a point $R=(P, e^l) \in G(\oK)$ living above a point
	$P\in \cE(\oK)$.
	As it is more usual in the literature on elliptic curves, in Example \ref{example:GalMotE}, in Subsection \ref{subsubsection4} and here we change slightly the notations: 
	$\omega_1, \omega_2$ denote the periods of the Weierstrass $\wp$-function and $\eta_1,\eta_2$ denote the quasi-periods of the Weierstrass $\zeta$-function.
	Let $ \sigma(z)$ be the Weierstrass $\sigma$-function relative to the lattice $\Lambda \cong \HH_1(\cE(\CC),\ZZ)$.
	In the elliptic case, the function $F_{q}(z)$ associated to the point $Q$ in \eqref{Fq(z)} in order to construct a differential form of the third kind $\xi_{Q}$ on $\cE$, is replaced by
	Serre's function 
	$	f_{q}(z)= \frac{\sigma(z+q)}{\sigma(z) \sigma(q)} e^{-\zeta(q) z } .$
	The exponentials of the elliptic integrals of the third kind
	\begin{equation}\label{eq:periods-fq}
		\int_{\lambda_i} \xi_{Q} = \eta_i q - \omega_i \zeta(q) \qquad \qquad i=1,2.
	\end{equation}
	are the so called ``quasi-quasi periods" of the function $f_q(z):$ in fact $
	f_{q}(z+ \omega_i)= f_{q}(z) e^{\eta_i q - \omega_i \zeta(q)}. $
	If $p_1$ and $p_2$ are two points of $\Lie \cE_\CC$ such that $p=p_2 -p_1$, we have that
	\begin{align} 
		\nonumber		\int_O^{R}  \xi_{Q}^G &=  \int_O^{R}  (-\tilde{\xi}_{Q} + \pi^* \xi_{Q}) = -\int_0^{\ell} dt +\int_{p_1}^{p_2} \frac{f_{q}'(z)}{f_{q}(z)} dz = -\ell +	\int_{p_1}^{p_2} d \log f_{q}(z) \\
		\nonumber		& = - \ell + \log \frac{f_{q}(p+p_1)}{f_{q}(p) f_q(p_1)} + \log f_{q}(p)
	\end{align}
	Since by \cite[(1.9)]{B19} and \cite[(3.13.4)]{Breen}  $\frac{f_x(y+z)}{f_x(y)f_x(z)}=\frac{\sigma(x+y+z) \sigma(x) \sigma(y) \sigma(z)}{\sigma(x+y) \sigma(x+z) \sigma(y+z)} \in K(\cE)$, it exists a rational function $g$ on $\cE$ such that $\log \frac{g(p_2)}{g(p_1)} = - \log \frac{f_{q}(p_2)}{f_{q}(p) f_{q}(p_1)}$. Adding to the class of $\pi^* \xi_{Q}$ the differential $d \log g$ we get
	\[  \int_O^{R}  (-\tilde{\xi}_{Q} + \pi^* \xi_{Q} +d \log g)=-\ell +  \log f_{q}(p).\]
	With respect to the dual bases introduced at the beginning of this section, the periods matrices of $M$ and $M^*$ are
	\begin{equation*}
		\Pi_M=	\left( {\begin{array}{cccc}
				\log f_q(p) - \ell &p & \zeta(p)&1 \\
				\eta_1 q - \omega_1 \zeta(q) & \omega_1 &  \eta_1 & 0 \\
				\eta_2 q - \omega_2 \zeta(q) & \omega_2 & \eta_2 &0  \\
				2 \mathrm i \pi &0 & 0 & 0\\
		\end{array} } \right), \quad 
		\Pi_{M^*}=	\left( {\begin{array}{cccc}
				0 &0 & 0 & 2 \mathrm i \pi \\
				0 & \eta_2 &  - \omega_2 & -\eta_2 p + \omega_2 \zeta(p) \\
				0 & - \eta_1 & \omega_1 & 	\eta_1 p - \omega_1 \zeta(p) \\
				1 & \zeta(q) & -q & -\log f_p(q) + \ell \\
		\end{array} } \right).
	\end{equation*}
\end{example}

\begin{notation}\label{remark:gamma}
	Modifying the loops $\gamma_1, \dots , \gamma_{2g}$, which lift $\lambda_1, \dots , \lambda_{2g}$ in $G$, by elements of $\HH_1(\GG_m^s , \ZZ),$ we can assume that $\int_{\gamma_i} \tilde{\xi}_{Q_k}=0.$ From now on we make this choice.
\end{notation}

The equalities \eqref{eq:periods-fq} read  $\Upsilon_Q = - \Pi_A\big(\widetilde{\log}_{A^*}(Q)\big)^{t}$. Remark that in the above Examples we have also the equalities $\Upsilon_P = - \Pi_{A^*}(\widetilde{\log}_{A}(P))^t$ and
$\Xi_{R}^t +\Xi_{R^*} = - \widetilde{\log}_{A^*}(Q)\big(\widetilde{\log}_A(P)\big)^t$.  Now we prove that these equalities are true in general.

\begin{lemma}\label{computperiodes}
With the above notations we have
\begin{itemize}
	\item  $\Upsilon_Q = - \Pi_A\big(\widetilde{\log}_{A^*}(Q)\big)^{t}$,
	\item  $\Upsilon_P = - \Pi_{A^*}(\widetilde{\log}_{A}(P))^t$, and
	\item $\Xi_{R}^t +\Xi_{R^*} = - \widetilde{\log}_{A^*}(Q)\big(\widetilde{\log}_A(P)\big)^t$.
\end{itemize}

\end{lemma}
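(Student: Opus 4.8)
The three identities are the Riemann bilinear relations attached to the $1$-motive $M$, and my plan is to deduce each of them, through the functional equation of the theta function $\theta$, from Lemma~\ref{lemcroixperiodquasi}; the whole computation is the arbitrary-dimensional, arbitrary-$(n,s)$ counterpart of the one carried out explicitly in Example~\ref{example:matriceMsemi-elliptique}. The first two identities I would prove in parallel by a direct computation of periods, the third by combining them.

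For the first identity I would use Notation~\ref{remark:gamma} to reduce the $(i,k)$-entry of $\Upsilon_Q$ to a genuine abelian period: $\big(\Upsilon_Q\big)_{i,k}=\int_{\gamma_i}\xi^G_{Q_k}=\int_{\gamma_i}\pi^*\xi_{Q_k}=\int_{\lambda_i}\xi_{Q_k}$, the period along the loop $\lambda_i$ of the third-kind differential $\xi_{Q_k}$ on $A$. Since $\exp_A^*\xi_{Q_k}=d\log F_{q_k}$ with $F_{q_k}$ the function of~\eqref{Fq(z)}, the functional equation of $\theta$ (with the chosen trivial semi-character) yields the transformation law $F_{q_k}(z+\lambda)=F_{q_k}(z)\,e^{\pi H(q_k,\lambda)-\sum_l h_l(q_k)\lambda_l}$, whence $\int_{\lambda_i}\xi_{Q_k}\equiv\pi H(q_k,\lambda_i)-\sum_{l=1}^g h_l(q_k)(\lambda_i)_l\pmod{2\mathrm i\pi\ZZ}$. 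Lemma~\ref{lemcroixperiodquasi} then rewrites $\pi H(q_k,\lambda_i)=\sum_l(q_k)_l\,\widetilde h_l(\lambda_i)$, and recognizing $(\lambda_i)_l=\int_z^{\lambda_i+z}\omega_l$, $\widetilde h_l(\lambda_i)=\int_z^{\lambda_i+z}\eta_l$, $(q_k)_l=\int_z^{q_k+z}\omega_l$, $h_l(q_k)=\int_z^{q_k+z}\eta_l$, this is exactly the $(i,k)$-entry of $-\Pi_A\big(\widetilde{\log}_{A^*}(Q)\big)^t$ once the block form~\eqref{periodsmatricesA} of $\Pi_A$ is inserted; for $g=n=s=1$ this is~\eqref{eq:periods-fq}. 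The second identity I would get by replaying this computation for the Cartier dual $M^*=[u^*:Y^\vee\to G^*]$, where $G^*$ is an extension of $A^*$ by $X^\vee(1)$ parameterized by $P_1,\dots,P_n\in A(K)=A^{**}(K)$, so that the forms $\xi^{G^*}_{P_i}$ and the $P_i$ play for $M^*$ the roles of the $\xi^G_{Q_k}$ and the $Q_k$ for $M$; this produces $\Upsilon_P=-\Pi_{A^*}\big(\widetilde{\log}_A(P)\big)^t$, the sign $-1$ surviving precisely because of the conventions built into~\eqref{periodsmatricesA} and into the dual bases~\eqref{basesforM*} (the antisymmetry $\langle z,z^*\rangle=-\langle z^*,z\rangle$ together with the signs attached to the bases of $\HH^1_\dR(A^*)$ and $\HH_1(A^*(\CC),\QQ)$).

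For the third identity I would use that the trivialization $\psi$ underlying $M$ is also the one underlying $M^*$ (\cite[(10.2.12.2)]{D74}): the point $R_{i,k}=\psi(x_i,y^\vee_k)\in (G_{Q_k})_{P_i}$ and the point $R^*_{k,i}\in(G^*_{P_i})_{Q_k}$ are the two incarnations of one and the same point of the fibre $\cP_{P_i,Q_k}\cong\cP^*_{Q_k,P_i}$ of the Poincaré biextension, so that with compatibly chosen bases they have the same $\GG_m$-logarithm $\ell_{i,k}$. Exactly as in Example~\ref{example:matriceMsemi-elliptique} — splitting $\xi^{G_{Q_k}}_{Q_k}=-\widetilde{\xi}_{Q_k}+\pi^*\xi_{Q_k}$, using $\exp_A^*\xi_{Q_k}=d\log F_{q_k}$ together with the fact that $\frac{F_x(y+z)}{F_x(y)F_x(z)}\in K(A)$, so that after modifying the class of $\pi^*\xi_{Q_k}$ by an exact form one gets $\int_O^{R_{i,k}}\xi^{G_{Q_k}}_{Q_k}=-\ell_{i,k}+\log F_{q_k}(p_i)$, and likewise $\int_O^{R^*_{k,i}}\xi^{G^*_{P_i}}_{P_i}=-\ell_{i,k}+\log F_{p_i}(q_k)$ — the terms $\ell_{i,k}$ cancel and $\big(\Xi_R^t+\Xi_{R^*}\big)_{k,i}=\log\frac{F_{q_k}(p_i)}{F_{p_i}(q_k)}$. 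By the symmetry $\Theta_{q_k}(p_i)=\Theta_{p_i}(q_k)$ of the theta quotient and the definition~\eqref{Fq(z)} of $F$ (compare~\eqref{eq:ratio}), this last quantity equals $\sum_l\big((q_k)_l\,h_l(p_i)-(p_i)_l\,h_l(q_k)\big)$, which is precisely the $(k,i)$-entry of $-\widetilde{\log}_{A^*}(Q)\big(\widetilde{\log}_A(P)\big)^t$.

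I expect the hard part to be not conceptual but a matter of bookkeeping: keeping track of the signs coming from the antisymmetry of $\langle\cdot,\cdot\rangle$, from the signs attached to the $A^*$-bases, and from the holomorphic normalization of the second-kind periods $\int_z^{q_k+z}\eta_l$; and, in the third identity, genuinely justifying that the $\GG_m$-logarithms of $R_{i,k}$ in $G_{Q_k}$ and of $R^*_{k,i}$ in $G^*_{P_i}$ coincide — that is, that the correction terms produced by the rational-function adjustment cancel — which is where the Cartier self-duality $\cP^*\cong\big([-\mathrm{Id}]\circ\mathrm{sym}\big)^*\cP$ of the Poincaré biextension must be invoked with care.
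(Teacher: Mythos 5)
Your approach matches the paper's proof: for the first two identities, you compute $\int_{\gamma_i}\xi^G_{Q_k}=\log\big(F_{q_k}(\lambda_i+z)/F_{q_k}(z)\big)$ from the multiplier of $F_{q_k}$ and then rewrite $\pi H(q_k,\lambda_i)$ as $\sum_l(q_k)_l\widetilde h_l(\lambda_i)$ — Lemma~\ref{lemcroixperiodquasi} in your version, re-derived inline in the paper, but the same identity. For the third, the paper likewise reduces to $\int_O^{P_i}\xi_{Q_k}-\int_O^{Q_k}\xi_{P_i}=-\log\big(F_{p_i}(q_k)/F_{q_k}(p_i)\big)$ (the toric terms $\int_O^{R_i}\widetilde\xi_{Q_k}$ and $\int_O^{R_k^*}\widetilde\xi_{P_i}$ cancelling, as you argue via the self-duality of $\cP$) and then uses the symmetry $\Theta_{q_k}(p_i)=\Theta_{p_i}(q_k)$, exactly as you propose.
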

\begin{proof}
The coefficient of index $(i,k  )$ of the matrix $\Upsilon_Q$ is
$$\begin{aligned}\int_{\gamma_i} \xi^G_{Q_k} &= \int_z^{\lambda_i+z}d\log(F_{q_k})\\[2mm]
&= \log\big(F_{q_k}(\lambda_i+z)/F_{q_k}(z)\big)\\ 
&= \pi H(q_k,\lambda_i) - \sum_{j=1}^g h_j(q_k)\int_z^{\lambda_i+z}\omega_{j}\\
&= - \sum_{j=1}^g\left(-\pi\frac{d}{dz_j}H(z,\lambda_i)|_{z=q_k}\int_z^{q_k+z}\omega_j + h_j(q_k)\int_z^{\lambda_i+z}\omega_{j}\right)\\
&= - \sum_{j=1}^g \left(-\int_z^{\lambda_i+z}\eta_{j}\int_{z}^{q_k+z}\omega_j + \int_{z}^{q_k+z}\eta_j\int_z^{\lambda_i+z}\omega_{j}\right)\end{aligned}$$
since $H(z,\lambda_i)$ is a linear form in $z$ and $\int_z^{\lambda_i+z}\eta_j = h_j(\lambda_i+z)-h_j(z) = \pi \frac{d}{dz_j}H(z,\lambda_i)$ does not depend on $z$. While the matrix on the right hand side is
\begin{equation*}
 - \Pi_A\left(\begin{matrix}\int_{z}^{q_k+z}\eta_j\\
-\int_{z}^{q_k+z}\omega_j\end{matrix}\right)_{\stackrel{j=1,\dots,g}{\scriptscriptstyle k=1,\dots,s\hfill}} = - \left(\begin{matrix} \sum_{j=1}^g\left(\int_z^{\lambda_i+z}\omega_j\int_z^{q_k+z}\eta_j - \int_z^{\lambda_i+z}\eta_j\int_z^{q_k+z}\omega_j\right)\end{matrix}\right)_{\stackrel{i=1,\dots,2g}{\scriptscriptstyle k=1,\dots,s\hfill}}
\end{equation*}
and the desired equality follows. A similar computation yields the second equality.

As for the third identity, we observe that $\int_{O}^{R_i}  \tilde{\xi}_{Q_k} - \int_{O}^{R^*_k} \tilde{\xi}_{P_i}$ is zero (see Example \ref{example:XiwithoutA}).
Therefore the coefficient of index $(k,i)$ of the matrix $\Xi_{R}^t +\Xi_{R^*}$ is   
$$\begin{aligned}
\int_{O}^{P_i}\xi_{Q_k} - \int_{O}^{Q_k}\xi_{P_i} &= \int_{z}^{p_i+z}d\log(F_{q_k}) - \int_{z}^{q_k+z}d\log(F_{p_i})\\
&= - \log\big(F_{p_i}(q_k)/F_{q_k}(p_i)\big)\\
&= - \sum_{j=1}^g\left(h_j(q_k)\int_z^{p_i+z}\omega_j - h_j(p_i)\int_z^{q_k+z}\omega_j\right)\\
&= - \sum_{j=1}^g\left(\int_z^{q_k+z}\eta_j\int_z^{p_i+z}\omega_j - \int_z^{p_i+z}\eta_j\int_z^{q_k+z}\omega_j\right).
\end{aligned}$$
Hence $\Xi_{R}^t +\Xi_{R^*}= - \widetilde{\log}_{A^*}(Q)(\widetilde{\log}_{A}(P))^t
$.
\end{proof}

\begin{remark}\label{rk:ChoixChemins}
	The matrix $\Pi_M$ of \eqref{periodsmatrices} depends among other things on the choice of the logarithms of $P,Q$ and $R.$ Let $\alpha_1, \dots , \alpha_n$ (\textit{resp.} $\alpha^*_1, \dots, \alpha^*_s$) be the coordinates in the base of $\Lambda^n$ (\textit{resp.} $\Lambda^{*s}$) of the difference of two logarithms of $P \in A$ (\textit{resp.} $Q \in A^*$). Then, the difference of the corresponding two 
	\[
	\bullet \; \widetilde{\log}_A(P) \; \mathrm{is}  \left(\begin{matrix} \alpha_1 \Pi_A \\ \vdots\\ \alpha_n \Pi_A \end{matrix}\right) \qquad
	\bullet \;  \widetilde{\log}_{A^*}(Q) \; \mathrm{is} \left(\begin{matrix} \alpha_1^* \Pi_{A^*} \\ \vdots\\ \alpha_s^* \Pi_{A^*} \end{matrix}\right) \qquad
	\bullet \;  \Upsilon_Q  \;\mathrm{is} - 2 \mathrm{i} \pi(\alpha^{*t}_1, \dots, \alpha^{*t}_s) ,
	\]
	where for the last assertion we have used the second one and Lemma \ref{computperiodes}.
	Let $(\widetilde{\alpha}_1, \dots, \widetilde{\alpha}_n )$  be the coordinates in the base of $\HH_1(G(\CC) , \ZZ)^n$ of the difference of two logarithms of $R \in G^n$. 
	For $i=1, \dots ,n$, we can write $\widetilde{\alpha}_i = (\alpha_i, \sigma_i)$, where $ \alpha_i$ is the image of $\widetilde{\alpha}_i $ via the surjection $ \HH_1(G(\CC) , \ZZ) \to  \HH_1(A(\CC), \ZZ) $ and $\sigma_i \in \HH_1(\GG_m^s(\CC) , \ZZ).$
	Hence the difference of two 
	\begin{itemize}
		\item $\Xi_R$ is $\left(\begin{matrix} \alpha_1  \\ \vdots\\ \alpha_n \end{matrix}\right) \Upsilon_Q + 2 \mathrm i \pi  \left(\begin{matrix} \sigma_1  \\ \vdots\\ \sigma_n \end{matrix}\right)$ 
	\end{itemize}
	since $\left(\begin{matrix}\int_{\lambda_i}\xi_{Q_k}\end{matrix}\right)_{i} = \Upsilon_Q$ by Notation \ref{remark:gamma}.
\end{remark}

We also compute
$${2\mathrm i\pi}\Pi_{M}^{-1}=\left(\begin{matrix}
0 &0 &\mathrm{Id}_s\\
0 &{2\mathrm i\pi}\Pi_{A}^{-1} &-\Pi_A^{-1}\Upsilon_Q\\
{2\mathrm i\pi}\mathrm {Id}_n &{-2\mathrm i\pi}\widetilde{\log}_{A}(P)\Pi_A^{-1} &\widetilde{\log}_A(P)\Pi_A^{-1}\Upsilon_Q-\Xi_{R}
\end{matrix}\right).$$

\begin{corollary}\label{cor:lien}
With the choices of bases in \eqref{basesforM} and \eqref{basesforM*} one has $\Pi_{M^*} = (2\mathrm i\pi\Pi_M^{-1})^t$. In particular $\det(\Pi_{M}) = \pm(2\mathrm i\pi)^{g+s}$ and $\det(\Pi_{M^*}) = \pm(2\mathrm i\pi)^{n+g}$.
\end{corollary}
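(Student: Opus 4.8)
The plan is purely computational: substitute the explicit block forms of $\Pi_M$ and $\Pi_{M^*}$ from \eqref{periodsmatrices} and verify the two assertions directly. For the identity $\Pi_{M^*}=(2\mathrm i\pi\,\Pi_M^{-1})^t$ I would start from the matrix $2\mathrm i\pi\,\Pi_M^{-1}$ displayed just above the statement and transpose it block by block, obtaining
\[
(2\mathrm i\pi\,\Pi_M^{-1})^t=\left(\begin{matrix}
0 &0 &2\mathrm i\pi\,\mathrm{Id}_n\\
0 &2\mathrm i\pi(\Pi_A^{-1})^t &-2\mathrm i\pi(\Pi_A^{-1})^t\,\widetilde{\log}_A(P)^t\\
\mathrm{Id}_s &-\Upsilon_Q^t(\Pi_A^{-1})^t &\Upsilon_Q^t(\Pi_A^{-1})^t\,\widetilde{\log}_A(P)^t-\Xi_R^t
\end{matrix}\right),
\]
and then match this, block by block, against $\Pi_{M^*}$. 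The blocks containing $\mathrm{Id}_n$, $\mathrm{Id}_s$ and the zero blocks agree at once, and the sizes are compatible since $\T_\HH(M)$ and $\T_\HH(M^*)$ both have dimension $n+2g+s$. The central $2g\times 2g$ block agrees by the abelian relation $\Pi_{A^*}=(2\mathrm i\pi\,\Pi_A^{-1})^t$ established in Subsection \ref{periodsonabelianvarieties}. The three remaining blocks are handled by the three identities of Lemma \ref{computperiodes}: the equality $\Upsilon_P=-\Pi_{A^*}(\widetilde{\log}_A(P))^t$ gives the $(2,3)$-block; the equality $\Upsilon_Q=-\Pi_A(\widetilde{\log}_{A^*}(Q))^t$ yields $-\Upsilon_Q^t(\Pi_A^{-1})^t=\widetilde{\log}_{A^*}(Q)$ (using $(\Pi_A^{-1})^t=(\Pi_A^t)^{-1}$), which is the $(3,2)$-block; and feeding this last relation back into the corner block reduces it to $-\widetilde{\log}_{A^*}(Q)(\widetilde{\log}_A(P))^t-\Xi_R^t$, which is exactly $\Xi_{R^*}$ by $\Xi_R^t+\Xi_{R^*}=-\widetilde{\log}_{A^*}(Q)(\widetilde{\log}_A(P))^t$.

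For the determinants I would compute $\det(\Pi_M)$ by Laplace expansion along the last block-row $(2\mathrm i\pi\,\mathrm{Id}_s,0,0)$ and the last block-column $(\mathrm{Id}_n,0,0)^t$: after a permutation of rows and columns $\Pi_M$ becomes block-triangular with diagonal blocks $2\mathrm i\pi\,\mathrm{Id}_s$, $\Pi_A$, $\mathrm{Id}_n$, so $\det(\Pi_M)=\pm(2\mathrm i\pi)^s\det(\Pi_A)=\pm(2\mathrm i\pi)^{g+s}$, using $\det(\Pi_A)=\pm(2\mathrm i\pi)^g$ from Subsection \ref{periodsonabelianvarieties}. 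Then from the identity just proved, $\det(\Pi_{M^*})=\det\big((2\mathrm i\pi\,\Pi_M^{-1})^t\big)=(2\mathrm i\pi)^{n+2g+s}\det(\Pi_M)^{-1}=\pm(2\mathrm i\pi)^{n+g}$; the same value also follows directly from the block structure of $\Pi_{M^*}$ by the analogous expansion, which gives $\pm(2\mathrm i\pi)^n\det(\Pi_{A^*})=\pm(2\mathrm i\pi)^{n+g}$.

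The whole argument is mechanical, so I do not expect a genuine obstacle; the only points demanding care are the bookkeeping of transposes (in particular the simplification $\Pi_A^t(\Pi_A^{-1})^t=\mathrm{Id}_{2g}$ used inside the corner block) and recalling that Lemma \ref{computperiodes} is stated under the normalization of Notation \ref{remark:gamma} (namely $\int_{\gamma_i}\tilde{\xi}_{Q_k}=0$), which is precisely the choice of bases for which the matrices \eqref{periodsmatrices} were written.
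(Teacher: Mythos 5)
Your proof is correct and follows exactly the paper's route: compare $(2\mathrm i\pi\Pi_M^{-1})^t$ block by block against $\Pi_{M^*}$ using the three identities of Lemma \ref{computperiodes} together with $\Pi_{A^*}=(2\mathrm i\pi\Pi_A^{-1})^t$, then read off the determinants from the block-triangular structure and $\det\Pi_A=\det\Pi_{A^*}=\pm(2\mathrm i\pi)^g$. The only cosmetic difference is that you transpose $2\mathrm i\pi\Pi_M^{-1}$ first and then match, while the paper matches its blocks directly against the transposed entries of $\Pi_{M^*}$; the bookkeeping is identical.
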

\begin{proof}
According to the Lemma \ref{computperiodes}, we have 
\begin{itemize}
	\item  $- \Pi_A^{-1}\Upsilon_Q=   (\widetilde{\log}_{A^*}(Q))^t,$
	\item $- 2\mathrm i\pi\widetilde{\log}_A(P)\Pi_A^{-1} =- \widetilde{\log}_A(P)\Pi_{A^*}^t =  \Upsilon_P^t,$ 
	\item $\widetilde{\log}_A(P)\Pi_A^{-1}\Upsilon_Q - \Xi_R = -\widetilde{\log}_A(P)(\widetilde{\log}_{A^*}(Q))^t - \Xi_R
	= \Xi_{R^*}^t$,
\end{itemize}
 which show the first equality. For the last equalities observe that
  $ \det(\Pi_M)= (-1)^{ns}(2\mathrm{i}\pi)^s\det(\Pi_A),$ 
 $ \det(\Pi_{M^*})=(-1)^{ns}(2\mathrm{i}\pi)^n\det(\Pi_{A^*})$
  and $\det(\Pi_{A^*})=\det(\Pi_{A})=  \pm(2\mathrm{i}\pi)^g$.
\end{proof}

Confronting the two matrices in \eqref{periodsmatrices}, as a direct consequence of Lemma \ref{computperiodes} we have

\begin{corollary}\label{cor:egalitedecorps}
The fields generated over $K$ by the periods of $M$ and $M^*$ respectively, coincide. 
\end{corollary}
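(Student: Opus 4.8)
The plan is to read off from \eqref{periodsmatrices} that $K(\Pi_M)$ is generated over $K$ by the entries of the four blocks $\Pi_A$, $\widetilde{\log}_A(P)$, $\Upsilon_Q$, $\Xi_R$ (together with $2\mathrm i\pi$, which occurs among the entries of $\Pi_M$ as soon as $s\geq 1$), and likewise that $K(\Pi_{M^*})$ is generated by the entries of $\Pi_{A^*}$, $\widetilde{\log}_{A^*}(Q)$, $\Upsilon_P$, $\Xi_{R^*}$; then one shows each of these last four blocks has entries in $K(\Pi_M)$. First, by \eqref{periodsmatricesA} the entries of $\Pi_{A^*}$ differ from those of $\Pi_A$ only by signs, so $K(\Pi_{A^*})=K(\Pi_A)\subseteq K(\Pi_M)$.

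Next I would feed in the three identities of Lemma~\ref{computperiodes}. The identity $\Upsilon_P=-\Pi_{A^*}(\widetilde{\log}_A(P))^t$ exhibits $\Upsilon_P$ over $K(\Pi_M)$ with no inversion needed. To treat $\widetilde{\log}_{A^*}(Q)$ I would invert the first identity: since $\det(\Pi_A)=\pm(2\mathrm i\pi)^g$ and $2\mathrm i\pi\in K(\Pi_M)$ precisely when that block is non-empty, the matrix $\Pi_A$ is invertible over $K(\Pi_M)$ and $(\widetilde{\log}_{A^*}(Q))^t=-\Pi_A^{-1}\Upsilon_Q$ has entries in $K(\Pi_M)$. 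Finally the third identity gives $\Xi_{R^*}=-\widetilde{\log}_{A^*}(Q)(\widetilde{\log}_A(P))^t-\Xi_R^t$, whose entries now lie in $K(\Pi_M)$ as well. This proves $K(\Pi_{M^*})\subseteq K(\Pi_M)$. For the reverse inclusion I would use that Cartier duality is involutive, $M^{**}\cong M$, and that the bases \eqref{basesforM} and \eqref{basesforM*} are dual to each other up to reordering and signs, so that applying the inclusion just proved with $M^*$ in place of $M$ gives $K(\Pi_M)=K(\Pi_{M^{**}})\subseteq K(\Pi_{M^*})$; alternatively one may run the block comparison symmetrically, or invoke Corollary~\ref{cor:lien} in the form $\Pi_{M^*}=(2\mathrm i\pi\,\Pi_M^{-1})^t$ and $\Pi_M=(2\mathrm i\pi\,\Pi_{M^*}^{-1})^t$.

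The only point needing care is the use of $\Pi_A^{-1}$: one has to make sure $2\mathrm i\pi$ is actually available in $K(\Pi_M)$ exactly in the cases where it is needed, and to dispose of the degenerate situations $n=0$, $s=0$ or $A=0$, in which some of the blocks are empty and the statement reduces either to $K(\Pi_A)=K(\Pi_{A^*})$ or to the explicit computation of the periods matrix of a 1-motive without abelian part in \cite[Lemma~3.2]{B02}. Everything else is routine bookkeeping with the identities of Lemma~\ref{computperiodes}.
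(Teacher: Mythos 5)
Your argument is correct and matches the paper's, which simply declares the statement a ``direct consequence of Lemma~\ref{computperiodes}'' obtained by confronting the two block matrices in \eqref{periodsmatrices}; you just spell out the block-by-block translation. One small inaccuracy in your aside: $2\mathrm i\pi$ lies in $K(\Pi_A)\subseteq K(\Pi_M)$ unconditionally (from the relation $\Pi_A\bigl(\begin{smallmatrix}-\cyr I_1^t&-\cyr I_2^t\\ \Omega_1^t&\Omega_2^t\end{smallmatrix}\bigr)=-2\mathrm i\pi J$), not merely when $s\geq 1$; and in any case the invertibility of $\Pi_A$ over $K(\Pi_M)$ needs no appeal to $2\mathrm i\pi$ at all, since any square matrix with entries in a field and nonzero determinant is invertible over that field.
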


\begin{remark}\label{rem:QPi_MK}
The \emph{periods matrices} of the $1$-motive $M$ defined over a subfield $K\subset\CC$ are the elements of the double class $\mathrm{Gl}_{n+2g+s}(\mathbb Z)\Pi_M\mathrm{Gl}_{n+2g+s}(K)$.
\end{remark}

\subsection{Representation of the motivic Galois group}\label{repmotGalois}
Assume the 1-motive $M=[u:\mathbb Z^n\to G]$ to be defined over $ K= \oK \subseteq \CC$.
 Denote by $<M>^\otimes$ the tannakian sub-category generated by $M$ in a suitable tannakian category of motives.
 As in Section \ref{periodsonabelianvarieties}, denote by $\underline{\mathrm{Isom}}^\otimes( {\T_\dR}_{|<M>^\otimes }, {\T_\HH}_{|<M>^\otimes } ) $ the functor over the $K$-schemes, which associates to any $f: T \to \mathrm{Spec}(K)$ the set of isomorphisms of $\otimes$-functors  $f^*{\T_{\dR}}_{|<M>^\otimes } \to f^*{ \T_\HH}_{|<M>^\otimes } ,$ where  $ {\T_\HH}_{|<M>^\otimes }$ and ${\T_\dR}_{|<M>^\otimes }$ are respectively the fibre functor ``Hodge realization" and ``De Rham realization" of $M$.
 It is representable by an affine $K$-scheme and the periods matrix $\Pi_M$ in \eqref{periodsmatrices} is a $\CC$-rational point of this $K$-scheme: in particular $\underline{\mathrm{Isom}}^\otimes(  {\T_\dR}_{|<M>^\otimes }, {\T_\HH}_{|<M>^\otimes }) (\CC)$ is not empty.
Moreover  $\MT(M) \otimes K ={\underline {\Aut}}^{\otimes}_\QQ({\T_\HH}_{|<M>^\otimes }) \otimes K$ acts on $\underline{\mathrm{Isom}}^\otimes( { \T_\dR}_{|<M>^\otimes }, {\T_\HH}_{|<M>^\otimes }) $: for any $f: T \to \mathrm{Spec}(K)$, $F \in \underline{\mathrm{Isom}}^\otimes( { \T_\dR}_{|<M>^\otimes }, {\T_\HH}_{|<M>^\otimes } )(T)$ and $G \in ( {\underline {\Aut}}^{\otimes}_\QQ({\T_\HH}_{|<M>^\otimes }) \otimes K)(T)$, the action of $G$ on $F$ is given by the composite
\[f^*{\T_{\dR}}_{|<M>^\otimes } \stackrel{F}{\longrightarrow}   f^* {\T_\HH}_{|<M>^\otimes }  \stackrel{G}{\longrightarrow}  f^* {\T_\HH}_{|<M>^\otimes }. \]
Hence we can conclude that the affine $K$-scheme $\underline{\mathrm{Isom}}^\otimes(  {\T_\dR}_{|<M>^\otimes }, {\T_\HH}_{|<M>^\otimes } ) $ is in fact a $\MT(M) \otimes K$-torsor, called the \emph{torsor of periods} of the 1-motive $M$ (see \cite[Proposition 1.6]{D82}).


 Denote by $\MT(M) \Pi_M$
the orbit of the point $\Pi_M$ under the action of $\MT(M),$ and by
 $\overline{\Pi_M}^Z $
 the $K$-Zariski closure of the point $\Pi_M$ in $\underline{\mathrm{Isom}}^\otimes( { \T_\dR}_{|<M>^\otimes }, {\T_\HH}_{|<M>^\otimes } ) .$ If the 1-motive $M$ is defined over $\oQQ$, Grothendieck periods conjecture states that (see Andr\'e's letter in \cite[Appendix]{B19})
 \begin{enumerate}
 	\item the torsor of periods $\underline{\mathrm{Isom}}^\otimes( { \T_\dR}_{|<M>^\otimes }, {\T_\HH }_{|<M>^\otimes }) $ is connected, and 
 	\item the dimension of
 	$ \overline{\Pi_M}^Z $ is equal to the dimension of $ \underline{\mathrm{Isom}}^\otimes(  {\T_\dR}_{|<M>^\otimes }, {\T_\HH}_{|<M>^\otimes } ).$
 \end{enumerate}
 In particular, this conjecture implies that $ \dim \overline{\Pi_M}^Z = \dim \MT(M).$

 The $\oQQ$-points of $\MT(M)$ act on the complex point $\Pi_M$ of $\underline{\mathrm{Isom}}^\otimes(  {\T_\dR}_{|<M>^\otimes },{ \T_\HH}_{|<M>^\otimes })$. These points of $\MT(M)$  are represented by the elements of $\mathrm{Gl}_{n+2g+s}(\oQQ)$ (acting by multiplication on the left of the given periods matrix) which leave invariant the ideal of algebraic relations with coefficients in $\overline{\mathbb Q}$, coming from motivic cycles,
 	between the entries of $\Pi_M$. See Section \ref{subsubsection4} for a computation of algebraic relations coming from geometry  between the periods of $M$ and for an explicit description in terms of matrices of its Mumford-Tate group in the semi-elliptic case with $n=s=1.$

  Because of the shape of the periods matrix \eqref{periodsmatrices}, a matrix preserving the ideal of algebraic relations between the entries of $\Pi_M$ with coefficients in $\overline{\mathbb Q}$, must have the shape
\begin{equation}\label{matmotGalois}
	\rho(a,u,u^*,\sigma) := \left(\begin{matrix}
		\mathrm{Id}_n &u &\sigma\\
		0 &a &u^*\\
		0 &0 & \det (a)^{1/g} \mathrm{Id}_s
	\end{matrix}\right)
\end{equation}
with $u\in\mathrm{Mat}_{n,2g}(\oQQ)$, $u^*\in\mathrm{Mat}_{2g,s}(\oQQ)$, $\sigma\in\mathrm{Mat}_{n,s}(\oQQ)$ and $a\in\mathrm{Gl}_{2g}(\oQQ)$. But not all matrices of this shape fix the ideal of algebraic relations among the periods of $M$ coming from algebraic cycles on $M$ and its powers.


In terms of matrices the group law of $\MT(M)$ has the following description:
\begin{equation}\label{eq:grouplaw}
\rho(a,u,u^*,\sigma)\rho(b,v,v^*,\tau) = \rho(ab,v+ub, av^* +u^*  \det (b)^{1/g}, \tau +uv^* + \sigma  \det (b)^{1/g}).
\end{equation}
The group $\MT(M)$ is a subextension of the Heisenberg group $\mathcal{H}_M$  introduced in \cite[\S 1.3]{B01}, whose factors system is $((u,u^*),(v,v^*)) \mapsto uv^*.$
The action of $\MT(M)(\CC)$ on the $\CC$-rational point $\Pi_M$ of $\underline{\mathrm{Isom}}^\otimes( \T_\dR, \T_\HH ) $ reads
\begin{equation}\label{actionGalmot}
	\rho(a,u,u^*,\sigma)\Pi_M = \left(\begin{matrix}\Xi_R+u\Upsilon_Q+2\mathrm i\pi\sigma &\widetilde{\log}_A(P)+u\Pi_A &\mathrm{Id}_n\\ a\Upsilon_Q+2\mathrm i\pi u^* &a\Pi_A &0\\ 2\mathrm i\pi \det(a)^{1/g} \mathrm{Id}_s &0 &0\end{matrix}\right)
	.\end{equation}

 The map $\MT(M)\to\MT(A)$, described in terms of matrices as $\rho(a,u,u^*,\sigma)\mapsto a$, is a group morphism the kernel of which is the unipotent radical $\UR(\MT(M)) $ of the Mumford-Tate group of $M$, since $\MT(A)$ is reductive \cite[\S 3.1, bullet (3.6)]{BPSS}.
 In particular $\UR(\MT(M))$ consists of unipotent elements that are represented by matrices as in \eqref{matmotGalois} with $a=\mathrm{Id}_{2g}$. We finish this Subsection with an example of Mumford-Tate group that will be re-examined in the last section.

	\begin{example}\label{example:GalMotE}
	The Mumford-Tate group of an elliptic curve $\cE$ defined over $K=\oK$ is
	\begin{itemize}
		\item $\mathrm{Gl}_2(\mathbb Q)$ if $\cE$ has no complex multiplication;
		\item $\mathbb G_m^2$ if $\cE$ has complex multiplication. In this case $\oK = \overline{\QQ}$ and $\tau:=\omega_2/\omega_1$ is a quadratic number. The periods matrix $\Big(\begin{matrix}\omega_1&\eta_1\\\omega_2&\eta_2\end{matrix}\Big)$ is then a zero of the ideal generated by the two linear forms $\omega_2-\tau\omega_1$ and $|\tau|^2\eta_1-\tau\eta_2-\kappa\omega_1$ for some $\kappa\in\overline{\mathbb Q}$, see \cite[Chap. III, \S3.2, Lemma 3.1, page 36] {M75}. The Mumford-Tate group of $\cE$ is therefore represented by the group of matrices $ \left(\begin{matrix}b_1 &b_2\\ - b_2|\tau|^2 &b_1+b_2(\tau+\overline{\tau})\end{matrix}\right)$ which are invertible and leave invariant the above ideal. After extending the scalars to $\QQ(\tau),$ we may identify the Mumford-Tate group of $\cE$ with the diagonal matrices. Indeed the above representation is then isomorphic to the following  conjugate
		$$ \frac{1}{\tau-\overline{\tau}}
		\left(\begin{matrix} 1 & -1\\ \tau & -\overline{\tau}\end{matrix}\right)
		\left(\begin{matrix} a_1 & 0\\ 0 & a_2 \end{matrix}\right) \left(\begin{matrix} -\overline{\tau} & 1\\ -\tau & 1\end{matrix}\right)
		= \frac{1}{\tau-\overline{\tau}}\left(\begin{matrix}a_2\tau-a_1\overline{\tau} &a_1-a_2\\ (a_2-a_1)|\tau|^2 &a_1\tau-a_2\overline{\tau}\end{matrix}\right).$$
	\end{itemize}
\end{example}

\subsection{Representation of the unipotent radical}\label{repsunipotentrad}

As in the previous Subsection assume the 1-motive $M$ defined over $ K= \oK \subseteq \CC$. By \eqref{eq:ses},
the Lie algebra of the unipotent radical of the motivic Galois group $\Galmot(M)$ of $M$ is the extension 
\[
	0 \longrightarrow Z(1) \longrightarrow \Lie \UR (M) \longrightarrow B \longrightarrow 0 
\]
whose factors system is the restriction to $B \times B$ of the map \eqref{factorsystem}
(recall that $B$ and $Z(1)$ have been introduced in Section \ref{subsection:BZ}). The image of the above short exact sequence via the fibre functor ``Hodge realization" is the short exact sequence of mixed Hodge structures over $\QQ$
\begin{equation}\label{T_Hexactsequence}
	0 \longrightarrow \HH_1(Z(1)(\CC),\QQ) \longrightarrow \UR (\MT(M))(\QQ) \longrightarrow \HH_1(B(\CC),\QQ) \longrightarrow 0 .
\end{equation}

We fix symplectic bases of $\Lambda $ and $\Lambda^*$ (as in Subsection \ref{periodsonabelianvarieties}) which are dual bases for the form $\mathrm{Im} \HH$.
Since $\HH_1(B(\CC),\QQ) $ is contained in $\HH_1(A(\CC),\QQ)^n \times \HH_1(A^*(\CC),\QQ)^s$, we write an element $\log_B(b)$ of $\HH_1(B(\CC),\QQ) $ in two different ways:
\begin{itemize}
	\item as vector $(u,u^*):= (u_1, \dots, u_n,u^*_1, \dots , u^*_s)$ of  $\HH_1(A(\CC),\QQ)^n \times \HH_1(A^*(\CC),\QQ)^s$, with $u_i \in \HH_1(A(\CC),\QQ)  $ and $u_k^* \in \HH_1(A^*(\CC),\QQ)$ for $ i=1, \dots,n $ and $ k=1, \dots,s$, 
	\item as matrix $(u,u^*)$ of symplectic coordinates, with
		\[u = \left(\begin{matrix} \underline{u}_1 \\ \vdots\\ \underline{u}_n \end{matrix}\right)  \in  \mathrm{Mat}_{n,2g}(\RR), \qquad
\big(\mathrm{resp.} \;	u^{*}= (\underline{u}_{ 1}^*, \dots,\underline{u}_{ s}^*)  \in  \mathrm{Mat}_{2g,s}(\RR) \big).\]
	where $\underline{u}_i \in \mathrm{Mat}_{1,2g}(\RR)$ (resp. $\underline{u}_k^* \in \mathrm{Mat}_{2g,1}(\RR)$) are the coordinates with respect to the chosen symplectic base of $\Lambda$ (resp. of $\Lambda^*$) of the point $u_i \in \HH_1(A(\CC),\QQ)  $ for $ i=1, \dots,n $ (resp. $u_k^* \in \HH_1(A^*(\CC),\QQ)$ for $ k=1, \dots,s$).
\end{itemize}
 Recalling the Hodge realization \eqref{eq:hodge} of the motivic Weil Pairing, by \eqref{factorsystem} the factors system defining the extension \eqref{T_Hexactsequence} is the map
\begin{align}\label{eq:LieRealizationFactorSystem}
\HH_1(B(\CC),\QQ) \times \HH_1(B(\CC),\QQ) & \longrightarrow \HH_1(Z(1)(\CC),\QQ) ,\\
	\nonumber \big( (u,u^*), (v,v^*) \big)& \longmapsto \big( \mathcal{W}^\HH_A (u_{i},v^*_{k})  \big)_{ i=1, \dots,n \atop k=1, \dots,s}. 
\end{align}

  \begin{definition}\label{defG}
  	Let $\mathcal G$ be the subgroup  of $\mathrm{Gl}_{n+2g+s}(\mathbb Q)$  consisting of unipotent upper triangular matrices $\rho(\mathrm{Id}_{2g},u,u^*,\sigma) $ as in \eqref{matmotGalois} such that 
  	\begin{itemize}
  		\item $(u,u^*)$ is the coordinates matrix (with respect to the chosen symplectic bases of $\Lambda $ and $\Lambda^*$) of a logarithm $\log_B(b) \in \HH_1(B(\CC),\QQ) $ of a point $b \in B(\CC)$,
  		\item $\sigma$ is the coordinates matrix of a logarithm $\log_{Z(1)}(T)\in \HH_1(Z(1)(\CC),\QQ) $ of a point $T \in Z(1)(\CC).$ 
  	\end{itemize}
  \end{definition}

 Let $\mathcal T $  be the subgroup of $\mathcal G$ of unipotent upper triangular matrices $\rho(\mathrm{Id}_{2g},0,0,\sigma) $ and denote by $\mathcal A $ the quotient  of $\mathcal G$ by $\mathcal T $.
Clearly $\mathcal G$ is an extension of $\mathcal A $ by $\mathcal T$ and  the factors system of this extension is given by the group law \eqref{eq:grouplaw}.

 \begin{proposition}\label{propGRUMTM}
 	The unipotent radical $\UR (\MT(M))(\QQ)$ of the Mumford-Tate group of $M$ is isomorphic as extension to the subgroup $\mathcal G$ of $\mathrm{Gl}_{n+2g+s}(\mathbb Q)$ introduced above.  
 \end{proposition}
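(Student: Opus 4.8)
The plan is to produce an explicit isomorphism of group extensions between $0\to\mathcal T\to\mathcal G\to\mathcal A\to 0$ and the sequence \eqref{T_Hexactsequence}, compatible with the identifications $\mathcal T\cong\HH_1(Z(1)(\CC),\QQ)$ (the matrices with $u=u^*=0$) and $\mathcal A=\mathcal G/\mathcal T\cong\HH_1(B(\CC),\QQ)$. First I would set up a bijection on underlying sets. By Definition~\ref{defG}, a matrix $\rho(\mathrm{Id}_{2g},u,u^*,\sigma)\in\mathcal G$ is nothing but the datum of a class $w\in\HH_1(B(\CC),\QQ)$ — the unique one whose image $dj(w)\in\Lie A^n_\CC\times\Lie A^{*s}_\CC$ has symplectic coordinates $(u,u^*)$ in the sense of \eqref{coordonnesymplectique} and \eqref{logb}, the point being that $dj$ is injective and $\QQ$-linear, hence carries $\HH_1(B(\CC),\QQ)$ into $\HH_1(A(\CC),\QQ)^n\times\HH_1(A^*(\CC),\QQ)^s$, so these coordinates are rational (cf. diagram \eqref{diagram1}) — together with a class $\zeta\in\HH_1(Z(1)(\CC),\QQ)$ whose coordinate matrix via \eqref{eq:isoL_T} is $\sigma$. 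Sending $\rho(\mathrm{Id}_{2g},u,u^*,\sigma)$ to $(w,\zeta)$, which by \eqref{T_Hexactsequence} is a point of $\UR(\MT(M))$, defines a bijection $\Phi\colon\mathcal G\to\UR(\MT(M))$; in the evident global linear coordinates $(w,\zeta)$ on both groups $\Phi$ is the identity map of affine spaces, so it is automatically compatible with the extension structures and it only remains to check that it is a group homomorphism.

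Next I would reduce this to a single identity between the two factors systems. Specializing the group law \eqref{eq:grouplaw} to $a=b=\mathrm{Id}_{2g}$ (so that $\det(b)^{1/g}=1$), the multiplication on $\mathcal G$ becomes $(w_1,\zeta_1)\cdot(w_2,\zeta_2)=(w_1+w_2,\ \zeta_1+\zeta_2+u_1u_2^*)$, where $u_1\in\mathrm{Mat}_{n,2g}(\QQ)$ is the block attached to $w_1$ and $u_2^*\in\mathrm{Mat}_{2g,s}(\QQ)$ the block attached to $w_2$ as in \eqref{matmotGalois}, and $u_1u_2^*\in\mathrm{Mat}_{n,s}(\QQ)$ is their matrix product. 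On the other side, the factors system of \eqref{T_Hexactsequence} is, as recalled just after that sequence, the restriction of \eqref{eq:LieRealizationFactorSystem} to $\HH_1(B(\CC),\QQ)^2$ read back in $\HH_1(\GG_m^{ns}(\CC),\QQ)$ through $\mathrm L_{\GG_m^{ns}}^{-1}$; so the multiplication on $\UR(\MT(M))$ becomes $(w_1,\zeta_1)\cdot(w_2,\zeta_2)=(w_1+w_2,\ \zeta_1+\zeta_2+(\mathrm{Im}(z^*_{2k}(z_{1i})))_{i,k})$, where $(z_{1i})_i,(z^*_{1k})_k$ and $(z_{2i})_i,(z^*_{2k})_k$ are the components of $dj(w_1)$ and $dj(w_2)$ in $\Lie A^n_\CC\times\Lie A^{*s}_\CC$. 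Thus $\Phi$ is a homomorphism exactly when $u_1u_2^*=(\mathrm{Im}(z^*_{2k}(z_{1i})))_{i,k}$ for all $w_1,w_2$.

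Finally I would prove this identity. Writing $z_{1i}=\frak x_{1,i}\Omega_A$ and $z^*_{2k}=\frak x^*_{2,k}\Omega_{A^*}$ as in \eqref{logb}, the $(i,k)$-entry of $u_1u_2^*$ is the Euclidean inner product $\frak x_{1,i}\cdot\frak x^*_{2,k}$ of the symplectic coordinate vectors in $\QQ^{2g}$, while $\mathrm{Im}(z^*_{2k}(z_{1i}))=\langle z_{1i},z^*_{2k}\rangle$ by \eqref{dualityproduct}; so everything reduces to the fact that the symplectic base $\lambda_1,\dots,\lambda_{2g}$ of $\Lambda$ and the base $-\lambda_{g+1},\dots,-\lambda_{2g},\lambda_1,\dots,\lambda_g$ of $\Lambda^*$ fixed in Subsection~\ref{periodsonabelianvarieties} are dual for the pairing $\langle\cdot,\cdot\rangle$, i.e. its Gram matrix in these bases is $\mathrm{Id}_{2g}$, which follows from \eqref{eq:ImH}, Lemma~\ref{lemcroixperiodquasi} and the $-2\mathrm i\pi J$ period relation of Subsection~\ref{periodsonabelianvarieties}. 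The identity then shows simultaneously that the two factors systems agree and that $u_1u_2^*$ lies in $\HH_1(Z(1)(\CC),\QQ)$ — being a value of the factors system of \eqref{T_Hexactsequence}, in fact of $\HH_1(Z'(1)(\CC),\QQ)$ by the definition of $Z'(1)$ and Lemma~\ref{lemma:pr*i*d*B-trivial} — so that $\mathcal G$ is genuinely closed under multiplication and $\Phi$ is an isomorphism of (algebraic group) extensions. The one genuinely delicate point is the bookkeeping behind this last identity: one must reconcile the sign and ordering conventions of \eqref{coordonnesymplectique}, \eqref{logb} and of the shape \eqref{matmotGalois} of $\rho$ with the pairing entering \eqref{eq:LieRealizationFactorSystem}, so that plain matrix multiplication $u_1u_2^*$ reproduces $\langle\cdot,\cdot\rangle$ exactly, with neither a spurious sign nor a transposition — which is precisely why the period matrix $\Pi_{A^*}$ in \eqref{periodsmatricesA} and the loops $\delta_k$ in \eqref{basesforM} were normalized the way they are.
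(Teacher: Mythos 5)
Your proof is correct and follows essentially the same route as the paper's: both arguments reduce the isomorphism of extensions to the single identity $u_1u_2^*=\bigl(\mathrm{Im}(z^*_{2k}(z_{1i}))\bigr)_{i,k}$, which the paper isolates as Lemma~\ref{lemma:factorsystem} and proves by the very same symplectic-coordinate computation you carry out in your third step. The only structural difference is that the paper first embeds both $\mathcal G$ and $\UR(\MT(M))$ in the generalized Heisenberg group $\mathcal H_M$ of \cite{B01} before comparing factors systems, whereas you construct the bijection directly from Definition~\ref{defG} and \eqref{T_Hexactsequence}; this is a harmless repackaging, not a genuinely different method.
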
 
 
 In order to prove this Proposition we need the following Lemma:

 \begin{lemma}\label{lemma:factorsystem}
 	The factors system \eqref{eq:LieRealizationFactorSystem} of the unipotent radical $\UR (\MT(M))$  coincides with the factors system of the extension $\mathcal G$ given by the group law \eqref{eq:grouplaw}.
 \end{lemma}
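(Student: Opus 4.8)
The strategy is to compute the two $2$-cocycles explicitly and to observe that they are the \emph{same} bilinear map. For the extension $\mathcal G$ of $\mathcal A$ by $\mathcal T$ use the set-theoretic section $\mathcal A\to\mathcal G$, $(u,u^*)\mapsto\rho(\mathrm{Id}_{2g},u,u^*,0)$. Specializing the group law \eqref{eq:grouplaw} to $a=b=\mathrm{Id}_{2g}$ (so that $\det(b)^{1/g}=1$) gives
\[
\rho(\mathrm{Id}_{2g},u,u^*,0)\,\rho(\mathrm{Id}_{2g},v,v^*,0)=\rho(\mathrm{Id}_{2g},u+v,u^*+v^*,uv^*),
\]
so the factors system of $\mathcal G$ relative to this section is $c_{\mathcal G}\big((u,u^*),(v,v^*)\big)=uv^*\in\mathrm{Mat}_{n,s}(\QQ)$, read inside $\mathcal T$.

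On the other side, the factors system of the extension $0\to\HH_1(Z(1)(\CC),\QQ)\to\UR(\MT(M))\to\HH_1(B(\CC),\QQ)\to 0$ of \eqref{T_Hexactsequence} is, by construction, the restriction of \eqref{eq:LieRealizationFactorSystem} to the image of $\HH_1(B(\CC),\QQ)$ in $\Lie A^n_\CC\times\Lie A^{*s}_\CC$ under $dj\circ\log_B$, once $\Lie\GG_{m\,\CC}^{ns}$ is normalized to $\HH_1(\GG_m^{ns}(\CC),\QQ)\otimes_\QQ\CC$ via ${\mathrm L}_{\GG_m^{ns}}$. Fix $b,b'\in B$ with $\log_B(b),\log_B(b')\in\HH_1(B(\CC),\QQ)$ and let $\frak{x}_1,\dots,\frak{x}_n,\frak{x}_1^*,\dots,\frak{x}_s^*$ (resp. $\frak{x}_1',\dots,\frak{x}_n',\frak{x}_1'^*,\dots,\frak{x}_s'^*$) be the symplectic coordinates of $dj(\log_B b)$ (resp. of $dj(\log_B b')$); by Definition~\ref{defG} these are the rows of $u$ and the columns of $u^*$ (resp. of $v,v^*$). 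By \eqref{logb} the first argument of \eqref{eq:LieRealizationFactorSystem} is the tuple $(z_1,\dots,z_n,z_1^*,\dots,z_s^*)$ with $z_i=\frak{x}_i\Omega_A$ and $z_k^*=\frak{x}_k^*\Omega_{A^*}$, and the second is the analogous tuple attached to $b'$; applying ${\mathrm L}_{\GG_m^{ns}}^{-1}$ to the value of \eqref{eq:LieRealizationFactorSystem} we are left with the rational matrix $\big(\mathrm{Im}(z_k'^*(z_i))\big)_{i,k}$. Hence the Lemma reduces to the identity $\mathrm{Im}(z_k'^*(z_i))=(uv^*)_{i,k}$ for all $i,k$.

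This identity is elementary bilinear algebra once the bases are pinned down. The rows of $\Omega_A$ are the symplectic basis $\lambda_1,\dots,\lambda_{2g}$ of $\Lambda$ viewed in $\CC^g$, and, since $\Omega_{A^*}=-J\Omega_A$, the rows $\mu_1,\dots,\mu_{2g}$ of $\Omega_{A^*}$ are exactly the basis $-\lambda_{g+1},\dots,-\lambda_{2g},\lambda_1,\dots,\lambda_g$ of $\HH_1(A^*(\CC),\ZZ)$ which, as recalled in Section~\ref{periodsonabelianvarieties}, is dual to $\lambda_1,\dots,\lambda_{2g}$ with respect to $\mathrm{Im}(H)$. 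Using the duality product \eqref{dualityproduct} together with Lemma~\ref{lemcroixperiodquasi} (which evaluates the relevant $\mathrm{Im}$-pairings between periods and quasi-periods) one checks that $\langle\lambda_a,\mu_b\rangle=\mathrm{Im}\big(\mu_b(\lambda_a)\big)=\delta_{ab}$ for all $a,b$, i.e. that $\Lambda$ and $\Lambda^*$ are dual lattices for $\langle\cdot,\cdot\rangle$ in these bases. Since $(z,z^*)\mapsto\mathrm{Im}(z^*(z))$ is $\RR$-bilinear, writing $z_i=\sum_a(\frak{x}_i)_a\lambda_a$ and $z_k'^*=\sum_b(\frak{x}_k'^*)_b\mu_b$ yields
\[
\mathrm{Im}\big(z_k'^*(z_i)\big)=\sum_{a,b}(\frak{x}_i)_a(\frak{x}_k'^*)_b\,\langle\lambda_a,\mu_b\rangle=\sum_a(\frak{x}_i)_a(\frak{x}_k'^*)_a=(uv^*)_{i,k}.
\]

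Consequently, under the identification of $\mathcal A$ with $\HH_1(B(\CC),\QQ)$ via $b\mapsto(u,u^*)$ and of $\mathcal T$ with $\HH_1(Z(1)(\CC),\QQ)$ via the ${\mathrm L}_{\GG_m^{ns}}$-normalized coordinates of Definition~\ref{defG}, the factors system $c_{\mathcal G}$ coincides with the restriction of \eqref{eq:LieRealizationFactorSystem}, which is the assertion of the Lemma. The only genuine difficulty is organizational: one must keep straight the three normalizations ${\mathrm L}_A,{\mathrm L}_{A^*},{\mathrm L}_{\GG_m^{ns}}$, the asymmetry ``first factor's $z$ paired against second factor's $z^*$'' built into both \eqref{eq:LieRealizationFactorSystem} and \eqref{eq:grouplaw}, and the sign conventions relating $z^*$, $\phi_H$, $\mathrm{Im}(H)$ and $J$; with the dual bases fixed as above, the equalities $\langle\lambda_a,\mu_b\rangle=\delta_{ab}$ and $\RR$-bilinearity do the rest.
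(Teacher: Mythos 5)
Your proof is correct and takes essentially the same approach as the paper: both compute the cocycle of $\mathcal G$ relative to the section $(u,u^*)\mapsto\rho(\mathrm{Id}_{2g},u,u^*,0)$ to be $uv^*$, and then reduce to the symplectic-coordinate identity $\mathrm{Im}\big(z^*_{2k}(z_{1i})\big)=\frak{x}_{1i}\,\frak{x}^*_{2k}$. Where the paper deduces that identity directly from the fact that $J$ represents $\mathrm{Im}(H)$ in the symplectic basis, you phrase it via the equivalent dual-basis relation $\langle\lambda_a,\mu_b\rangle=\delta_{ab}$; your citation of Lemma~\ref{lemcroixperiodquasi} is superfluous, as that relation is simply the duality of the chosen symplectic bases of $\Lambda$ and $\Lambda^*$ already recorded in Section~\ref{periodsonabelianvarieties}.
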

 
 \begin{proof} Consider two elements  $\log_B(b_1) =(u,u^*):= (u_1, \dots, u_n,u^*_1, \dots , u^*_s)$ and $\log_B(b_2) =(v,v^*):= (v_1, \dots, v_n,v^*_1, \dots , v^*_s)$ of  $\HH_1(B(\CC),\QQ)$.
Since the bases of $\Lambda$ and $\Lambda^*$ are dual for the duality product \eqref{dualityproduct}, by \eqref{eq:hodge} we have that
\[ \big( \mathcal{W}^\HH_A (u_{i},v^*_{k})  \big)_{ i,k} =
\Big( \mathrm{Im} \big(v^*_{ k}(u_{ i})\big)  \Big)_{i,k} 
=  \big(  \, \underline{u}_i \;  \underline{v}^*_k \big)_{i,k} =	u v^*.
\] 	
Hence the image of the point $( \log_B(b_1), \log_B(b_2))$ via the factors system \eqref{eq:LieRealizationFactorSystem} is $u v^*.$	
 	 \end{proof}

\begin{proof}[Proof of Proposition \ref{propGRUMTM}] Let $\mathcal{H}_M = \HH_1(A(\CC),\QQ)^n \times \HH_1(A^*(\CC),\QQ)^s \times \HH_1(\GG_m(\CC),\QQ)^{ns}$ be the generalized Heisenberg group associated to $M$ in \cite[1.3]{B01}, which has group law \eqref{eq:grouplaw}. Identifying \\ $\rho(\mathrm{Id}_{2g},u,u^*,\sigma)$ with the element $ (u,u^*,\sigma) $ of $ \HH_1(B(\CC),\QQ) \times \HH_1(Z(1)(\CC),\QQ)$, we view $\mathcal G$ as a subgroup of $\mathcal{H}_M$ which is an extension of $\HH_1(B(\CC),\QQ)$ by $\HH_1(Z(1)(\CC),\QQ)$. By \cite[Structural Lemma 1.4 (2)]{B01} and by \eqref{T_Hexactsequence},  $\UR(\MT(M))$ is a subgroups of $\mathcal{H}_M$ which is an extension of $\HH_1(B(\CC),\QQ)$ by $\HH_1(Z(1)(\CC),\QQ)$. According to  Lemma \ref{lemma:factorsystem} these two groups have the same factors system, and so they are isomorphic.
\end{proof}

\begin{remark} The subgroup $\mathcal{T}$ of $\mathcal{G}$ is isomorphic to $\W_{-2}(\UR(\MT(M)))$ and the quotient  $\mathcal{A}$ of $\mathcal{G}$ by $\mathcal{T}$ is isomorphic to $\UR(\MT(M \oplus M^* / \W_{-2}(M \oplus M^*)))$.
\end{remark}

\begin{lemma}\label{lemma[]} 
	Let $(u,u^{*})$ and $(v,v^{*})$ be the coordinates matrices (with respect to the chosen symplectic bases of $\Lambda$ and $\Lambda^*$) of two logarithms $ \log_B(b_1),\log_B(b_2) \in \HH_1(B(\CC),\QQ)$ respectively.
	Then applying $e^{2 \mathrm i \pi \cdot}$  to the coefficients of the matrix
	\[u v^* - v u^*\]
	 we obtain the Lie realization of the Lie bracket \eqref{eq:LieRealization[,]} of the point $ (\log_B(b_1), \log_B(b_2))$.	
\end{lemma}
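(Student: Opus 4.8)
The plan is to reduce this statement to Lemma \ref{lemma:factorsystem}, which has just been proved, together with the explicit formula \eqref{eq:LieRealization[,]} for the Lie realization of the Lie bracket. First I would recall that, by definition \eqref{eq:LieRealization[,]}, for $(z_1,z_1^*),(z_2,z_2^*)\in\Lie A^n_\CC\times\Lie A^{*s}_\CC$ the $(i,k)$-component of the Lie bracket is
$$\big[(z_1,z_1^*),(z_2,z_2^*)\big]_{i,k}=e^{2\mathrm i\pi\mathrm{Im}(z^*_{2k}(z_{1i})-z^*_{1k}(z_{2i}))}=\mathcal W_A^\Lie(z_{1i},z^*_{2k})\cdot\mathcal W_A^\Lie(z_{2i},z^*_{1k})^{-1}.$$
Setting $(z_\ell,z^*_\ell)=dj(\log_B(b_\ell))$ for $\ell=1,2$, the exponent above is exactly the difference of the two factors-system expressions \eqref{eq:LieRealizationFactorSystem} evaluated at $\big(dj(\log_B(b_1)),dj(\log_B(b_2))\big)$ and at the swapped pair $\big(dj(\log_B(b_2)),dj(\log_B(b_1))\big)$. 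This is the conceptual content: the Lie bracket is the antisymmetrization of the factors system, i.e. $[X,Y]=c(X,Y)-c(Y,X)$ where $c$ denotes the cocycle \eqref{eq:LieRealizationFactorSystem}.

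Next I would invoke the computation carried out inside the proof of Lemma \ref{lemma:factorsystem}. There it is shown that, with $(u,u^{*t})$ the coordinates matrix of $dj(\log_B(b_1))$ and $(v,v^{*t})$ that of $dj(\log_B(b_2))$, one has
$$\Big(2\mathrm i\pi\,\mathrm{Im}\big(z^*_{2k}(z_{1i})\big)\Big)_{i,k}=\big(2\mathrm i\pi\,\mathfrak x_{1i}\,\mathfrak x^*_{2k}\big)_{i,k},\qquad uv^*=\big(\mathfrak x_{1i}\,\mathfrak x^*_{2k}\big)_{i,k},$$
so that applying ${\mathrm L}_{\GG_m^{ns}}$ of \eqref{eq:isoL_T} (i.e. multiplying by $2\mathrm i\pi$) to $uv^*$ yields the matrix $\big(2\mathrm i\pi\,\mathrm{Im}(z^*_{2k}(z_{1i}))\big)_{i,k}$. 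Swapping the roles of $b_1$ and $b_2$ — which swaps the matrices $(u,u^{*t})$ and $(v,v^{*t})$ — the same identity gives that $2\mathrm i\pi\,vu^*$ has coefficient $2\mathrm i\pi\,\mathrm{Im}(z^*_{1k}(z_{2i}))$ in position $(i,k)$. Subtracting, the coefficient of index $(i,k)$ of $2\mathrm i\pi(uv^*-vu^*)$ is $2\mathrm i\pi\,\mathrm{Im}(z^*_{2k}(z_{1i})-z^*_{1k}(z_{2i}))$, which is precisely the exponent appearing in $\big[(z_1,z_1^*),(z_2,z_2^*)\big]_{i,k}$ from \eqref{eq:LieRealization[,]}. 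Therefore $e^{2\mathrm i\pi\cdot}$ applied coefficientwise to $uv^*-vu^*$ recovers $\big[dj(\log_B(b_1)),dj(\log_B(b_2))\big]$, as claimed.

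I do not expect a serious obstacle here: the lemma is essentially a bookkeeping corollary of Lemma \ref{lemma:factorsystem}. The one point that needs a little care is the sign and ordering convention — making sure that the antisymmetrization $uv^*-vu^*$ (rather than $vu^*-uv^*$) corresponds to the bracket written as $\mathrm{Im}(z^*_{2k}(z_{1i})-z^*_{1k}(z_{2i}))$ in \eqref{eq:LieRealization[,]} — and that the identification of $z^*_{2k}$ with $\phi_H^{-1}(z^*_{2k})$ via \eqref{eq:ImH} is applied consistently in both terms, so that the two applications of the Lemma \ref{lemma:factorsystem} computation use the same normalization. Once these conventions are aligned, the equality is immediate.
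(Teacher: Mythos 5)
Your proposal is correct and follows essentially the same route as the paper: both reduce to the symplectic-coordinate computation $uv^* = (\mathfrak x_{1i}\,\mathfrak x^*_{2k})_{i,k}$ from the proof of Lemma \ref{lemma:factorsystem}, then antisymmetrize to identify $uv^*-vu^*$ with the exponent in \eqref{eq:LieRealization[,]} and finish via the normalization \eqref{eq:isoL_T}. The only cosmetic difference is that you obtain the second term by swapping $b_1\leftrightarrow b_2$ and citing the first computation, whereas the paper redoes the two-term computation in one stroke.
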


\begin{proof} Consider two elements  $\log_B(b_1) =(u,u^*)= (u_1, \dots, u_n,u^*_1, \dots , u^*_s)$ and $\log_B(b_2) =(v,v^*)= (v_1, \dots, v_n,v^*_1, \dots , v^*_s)$ of  $\HH_1(B(\CC),\QQ)$.
Proceeding as in Lemma \ref{lemma:factorsystem}, by the expression of the Lie realization of the Lie bracket \eqref{eq:LieRealization[,]}, we have on one hand
\[(	\big[(u,u^*),(v,v^*)\big] )_{i,k}=
e^{	2{\rm i}\pi{\rm Im} \big( v^*_{ k} (u_{ i}) - u^*_{k} (v_{i}) \big) } =  e^{ 2 \mathrm i \pi \big( \underline{u}_{i} \, 
	\underline{v}_{ k}^*  -  \underline{v}_i \, \underline{u}_k^* \big)} .
	\] 
	 On the other hand
	$	u v^* - v u^*=  \big( \underline{u}_{i} \, 
	\underline{v}_{ k}^*  -  \underline{v}_i \, \underline{u}_k^* \big)_{i,k} .$
\end{proof}

\begin{remark}
With notations of Subsection \ref{subsectionZ'(1)}, let $j: B \to A^n\times {A^*}^s$ be the inclusion and let $\varpi:A^n\times{A^*}^s\twoheadrightarrow C:=(A^n\times{A^*}^s)/j(B)$ be the natural surjection, so that $B^* = ({A^*}^n\times A^s)/C^*$. The pull-back by $\varpi$ of a differential form $\varphi$ of the first or the second kind on $C$ is a differential form of the same kind on $A^n\times{A^*}^s$ and its integral on a path lying in $\Lie B_\CC$ is zero. Expressing $\varpi^*\varphi$ on the chosen bases of $\mathrm H_{\mathrm{dR}}^1(A)$ and $\mathrm H_{\mathrm{dR}}^1(A^*)$ modulo exact forms, the equation $\int_0^{(p,q)}\varpi^*\varphi=0$ gives a linear relation with coefficients in $K$ between the components of any generalized logarithms $\widetilde{\log}_A(P)=(\int_{z}^{p_i+z}\omega_j,\int_{z}^{p_i+z}\eta_j)_{i,j}$ and $\widetilde{\log}_{A^*}(Q)= (\int_{z}^{q_k+z}\eta_j,-\int_{z}^{q_k+z}\omega_j)_{k,j}$.
\textit{Letting $\varphi$ run over a base of $\mathrm H_{\mathrm{dR}}^1(C)$ we find a basis of $2\big((n+s)g-\dim(B)\big)$ linear equations satisfied by any generalized logarithm $\tlog_B(b)$ of any point $b$ belonging to $B$}.

By Lemma \ref{lemma[]}, $\Lie Z'(1)_\CC$ is the smallest Lie sub-algebra of $\Lie \GG_{m \, \CC}^{ns}  $ which contains all logarithms whose coordinates matrices are of the kind
$u v^* - v u^* $, where
$(u,u^{*})$ and $(v,v^{*})$ are respectively coordinates matrices of logarithms $ \log_B(b_1),\log_B(b_2)$ of two points $b_1,b_2$ of $B$ such that
 $\log_B(b_1) $ and $ \log_B(b_2) $ belong to
$\HH_1(B(\CC),\QQ). $ 
The Lie sub-algebra $\Lie Z(1)_\CC$ is the smallest Lie sub-algebra of $ \Lie \GG_{m\, \CC}^{ns} $ containing $\Lie Z'(1)_\CC$ and such that $\Lie (Z/Z')(1)_\CC$ contains the image of the point $\Xi_R$. Hence\textit{ $\Lie Z(1)_\CC$ is defined by a set of $ns-\dim Z(1)$ linear equations with coefficients in $\mathbb Q$ linking the entries of $u v^* - v u^*$ and the components of  $\Xi_R \; \mathrm{mod} \,\Lie Z'(1)_\CC$.} The space of these linear equations is isomorphic to the kernel of \eqref{pointpsi}.
\end{remark}

\section{Example: motives over semi-elliptic surfaces}\label{subsubsection4}	Let $M=[\ZZ \to G]$ be a 1-motive defined over $\oQQ$, where $G$ is an extension of an elliptic curve $\cE$ by $\GG_m$ parametrized by a point  $Q \in \cE^*(\oQQ)$, and where the group morphism $\ZZ \to G$ is defined by a point $R=(P, e^\ell)  \in G(\oQQ)$ living above a point $P\in \cE(\oQQ)$. Set $r=(p, \log f_q(p)- \ell) $ be a logarithm of the point $R$. With notations of Examples \ref{example:matriceMsemi-elliptique} and \ref{example:GalMotE}, assuming that $\cE$ has complex multiplication, we have the action
\begin{multline*} \rho(a,u,u^*,\sigma)\Pi_M = \left(\begin{matrix}
		1 &u_1 &u_2 &\sigma\\
		0 &\frac{a_2\tau-a_1\bar\tau}{\tau-\bar\tau} &\frac{a_1-a_2}{\tau-\bar\tau} &u_1^*\\
		0 &\frac{(a_2-a_1)|\tau|^2}{\tau-\bar\tau} &\frac{a_1\tau-a_2\bar\tau}{\tau-\bar\tau} &u_2^*\\
		0 &0 &0 & a_1 a_2
	\end{matrix}\right)
	\left(\begin{matrix}
		\log f_q(p)- \ell &p &\zeta(p) &1\\
		\eta_1q-\omega_1\zeta(q) &\omega_1 &\eta_1 &0\\
		\eta_2q-\omega_2\zeta(q) &\omega_2 &\eta_2 &0\\
		2i\pi &0 &0 &0
	\end{matrix}\right) = \\
	 = \left(\begin{matrix}
		\log f_q(p) - \ell +q\eta(u)-\zeta(q)\omega(u)+2i\pi\sigma &p+\omega(u) &\zeta(p)+\eta(u) &1\\
		q(a_2\eta_1-\frac{(a_1-a_2)\kappa\omega_1}{(\tau-\bar\tau)\tau})-\zeta(q)a_1\omega_1+2i\pi u_1^* &a_1\omega_1 &a_2\eta_1-\frac{(a_1-a_2)\kappa\omega_1}{(\tau-\bar\tau)\tau} &0\\
		q(a_2\eta_2-\frac{(a_1-a_2)\kappa\omega_2}{(\tau-\bar\tau)\tau})-\zeta(q)a_1\omega_2+2i\pi u_2^* &a_1\omega_2 &a_2\eta_2-\frac{(a_1-a_2)\kappa\omega_2}{(\tau-\bar\tau)\tau} &0\\
		2i\pi a_1 a_2 &0 &0 &0
	\end{matrix}\right)
\end{multline*}
	with $u=(u_1,u_2) , u^*=(u_1^*,u_2^*) \in \GG_a(\QQ)^2,$ $\sigma \in \GG_a(\QQ),$ and where we set $\omega(u):=u_1\omega_1+u_2\omega_2$ and $\eta(u):=u_1\eta_1+u_2\eta_2 .$ By the action of $ \rho(a,u,u^*,\sigma)$ on the periods of $M$ we understand the transformation of a coefficient of $\Pi_M$ in the corresponding coefficient of the product matrix 
	$ \rho(a,u,u^*,\sigma)\Pi_M.$ By pull-back we deduce the action of $ \rho(a,u,u^*,\sigma)$ on polynomial relations between periods of $M$. The coordinates matrix $u \Upsilon_Q$ appearing in the matrix \eqref{actionGalmot} corresponds to the term $q\eta(u)- \zeta (q)\omega(u)$ of the above matrix: in fact by Lemma \ref{computperiodes}
$ u \Upsilon_Q = - u \Pi_A (\tlog_{A^*}(Q))^t = - \big(\omega(u), \eta(u)\big) \big(\zeta(q), -q \big)^t.$

 \medskip

In the case of our elliptic curve $\cE$, the principal polarization is $H(z_1,z_2) = \frac{z_1 {\bar z}_2}{ \mathrm{Im}(\omega_1 \overline{\omega}_2)}$ and the matrix of the alternating form $\mathrm{Im} H$ is 
$J= \left( \begin{matrix}
	0&1 \\
	 -1& 0 
\end{matrix}\right) $. Define $\omega(u^*):=u_1^*\omega_1+u_2^*\omega_2$ and $\eta(u^*):=u_1^*\eta_1+u_2^*\eta_2 .$  The point $\big(\omega(u), \phi_H (\omega(u^*))\big)$ runs over the smallest Lie subalgebra $\Lie B_\CC$ of $\Lie \cE_\CC \times \Lie \cE^*_\CC$ which contains a multiple of $(p, \phi_H(q))$ modulo $\Lambda \times \Lambda^*$ if and only if $(u,u^*) $ runs in $\HH_1 (B(\CC), \QQ)$. By the last two equalities of \eqref{eq:ratio}, the value of the Lie realization of the motivic Weil pairing at a point of $\Lie B_\CC$ is 
\[\mathcal{W}_A^{\Lie}\big(\omega(u),\phi_H(\omega(u^*))\big) = e^{ 2{\rm i} \pi \mathrm{Im}\big(H(\omega(u^*),\omega(u))\big)} = e^{\frac{2{\rm i} \pi}{\mathrm{Im}(\omega_1 \overline{\omega}_2)} \mathrm{Im} \big(\omega(u^*) \overline{\omega(u)} \big)}.\]
By \eqref{eq:LieRealization[,]}, the value of the Lie realization of the Lie Bracket $[\cdot, \cdot]: B \otimes B \to \GG_m$ at a point  of $\Lie B_\CC^2$ is 
\begin{equation}\label{[]casE} \big[\big(\omega(u), \phi_H (\omega(u^*))\big),\big(\omega(v), \phi_H (\omega(v^*))\big) \big] = e^{ \frac{ 2{\rm i} \pi}{\mathrm{Im}(\omega_1 \overline{\omega}_2)} \mathrm{Im} \big(\omega(v^*) \overline{\omega(u)} + \omega(v) \overline{\omega(u^*)} \big) }.
\end{equation}
Hence the entry $\sigma$ of the matrix $\rho(a,u,u^*,\sigma)$ runs over the smallest $\QQ$-subspace of $\Lie \GG_m$ which contains all the logarithms of the above values
of the Lie realization of the Lie Bracket $[\cdot, \cdot]: B \otimes B \to \GG_m$ and a multiple of the period $\log f_q(p) - \ell $ modulo $2 {\rm i} \pi \ZZ$.

 \medskip

According to the geometry of the 1-motive $M$ (see results of Section \ref{studyDimUR}), we now list
\begin{itemize}
	\item polynomial relations with coefficients in $\oQQ$ between the periods of $M$, which come from algebraic cycles on $M$ and on the products of $M$ with itself. These polynomial relations generate an ideal $\mathcal I$ whose locus of zeros $Z(\mathcal I)$ contains the point $\Pi_M$;
	\item a set of generators of the field $\oQQ(\Pi_M)$;
	\item a parametrization of the subgroup $G_{\mathcal I}$ of $\mathrm{Gl}_4(\QQ)$ which leaves invariant the ideal $\mathcal I$. 
\end{itemize} 

 \textbf{From now on and until the end of this text, we assume Grothendieck periods conjecture for the 
  1-motive $M$ defined over $\oQQ$.} It implies that the ideal $\mathcal I$ contains all polynomial relations between periods of $M$ and in particular it is prime. The subgroup $G_{\mathcal I}$ of $\mathrm{Gl}_4(\QQ)$ contains then $\MT(M)$. Finally we check that $\dim G_{\mathcal I} = \dim \Galmot (M) $, and therefore we can conclude that $G_{\mathcal I}$ is a representation of the Mumford-Tate group of $M$. Under our assumption, the transcendence degree of the field $\oQQ(\Pi_M)$ over $\oQQ$ is equal to $10- \mathrm{Rank}(\mathcal I) ,$ that is
 \[\mathrm{Numbers \; of \; periods \; of \;} M - \mathrm{Rank}(\mathcal I) =\dim \Galmot(M). \]
 In other words a decrease in the dimension of the motivic Galois group of $M$ is equivalent to an increase of the rank of the ideal $\mathcal I$. 
  \medskip
  
  We start with polynomial relations between the periods of the elliptic curve $\cE$.
The maximal dimension of the motivic Galois group of an elliptic curve is 4 and it occurs if the elliptic curve has no complex multiplication: in fact, in this case $\MT(\cE) =\mathrm{Gl}_2(\QQ)$ and we have no polynomial relations between the 1-periods of $\cE$. If $\cE$ has complex multiplication, the dimension of $\Galmot (\cE)$ is 2: in fact, in this case $\MT(\cE) \cong \GG_m^2$ and it exists $2=4-2$ polynomial relations between the periods of $\cE$: 
\begin{equation}\label{PolynomialRelationsForE}
\left[\begin{array}{l}
 \omega_2-\tau\omega_1 =0,\\
\bar\tau\eta_1-\eta_2-\frac{\kappa}{\tau}\omega_1 =0 \\
\end{array}\right.
\end{equation}
with $\kappa$ belonging to the field of definition of $\cE$. Using these two polynomials, in Example \ref{example:GalMotE} we compute a $2 \times 2$ invertible matrix which represents an element of $\MT(\cE).$

Observe that Legendre relation is not a polynomial relation between the 1-periods of $\cE$, since $2 \mathrm i \pi$ is a 2-period of $\cE.$ But as observed in Example \ref{legendreRelation}, Legendre relation expresses the 2-period  $2 \mathrm i \pi$ as the value of a degree 2 polynomial evaluated in 1-periods (see Lemma \ref{lemma:r-periods}).

 \medskip

In order to recover Legendre relation, consider the 1-motive $M= \cE \times \GG_m.$
 We have $\dim \Galmot(M)=  \dim \Galmot(\cE)$ and, in terms of matrices, $\Galmot(M)$ embeds in $\Galmot (\cE) \times \Galmot(\GG_m) \subseteq \mathrm{Gl}_2(\QQ) \times \GG_m$ via $a \mapsto (a, \det a).$ Observe that $\dim (\Galmot (\cE) \times \Galmot(\GG_m)) - \dim \Galmot (M) =1$ and this decrease in the dimension
 of $\Galmot(M)$ corresponds to Legendre relation. Hence if $\cE$ has no complex multiplication, we have just Legendre relation as polynomial relation between the periods of $M$, and if $\cE$ has complex multiplication, we have $3$ polynomial relations between the periods of $M= \cE \times \GG_m$: 
 \begin{equation}\label{PolynomialRelationsForEG_m}
 \left[\begin{array}{l}
 	\omega_2-\tau\omega_1 =0, \\
 	\bar\tau\eta_1-\eta_2-\frac{\kappa}{\tau}\omega_1 =0, \\
 	 2i\pi -\omega_1\eta_2+\omega_2\eta_1=0.\\
 \end{array}\right.
 \end{equation}

\medskip

Now let $M=[\ZZ \to G]$ be as at the beginning of this Section. By \eqref{eq:shortexactsequenceUR} and \cite[Lemma 3.5]{BPSS},
$\dim \Galmot(M)=  \dim \Galmot(\cE)  + \dim \UR(M),$
 with $\dim \UR(M) = 2 \dim B + \dim Z(1).$   Look at \cite[Table 2]{BPSS} and Corollary \ref{cor:DimSemi-ellipticCase} for a complete table of the dimensions of the motivic Galois group of $M$. 
By Corollary \ref{cor:maxdim} the dimension of the unipotent radical of $\Galmot(M)$ is maximal if the points $P$ and $Q$ are $\End(\cE) \otimes_\ZZ \QQ$-linearly independent: under these hypothesis, $
\dim \UR(M)=5$. If the dimension of $\UR(M)$ is not maximal, we have 
$ 5-\dim \UR(M) $ polynomial relations coming from geometry between the periods of $M$ (recall we are under Grothendieck periods conjecture), that add to Legendre relation if $\cE$ has no complex multiplication, or that add to the three polynomials \eqref{PolynomialRelationsForEG_m} if $\cE$ has complex multiplication. We write down these $ 5-\dim \UR(M)$ polynomial relations according to the geometry of $M$ (see results of Section \ref{studyDimUR}).

\medskip

  Recall that we have the short exact sequence $ 0 \to \UR (\MT(M)) \to \MT(M) \to \MT(\cE) \to 0$, which is the 
   image of the exact sequence \eqref{eq:shortexactsequenceUR} via the fibre functor ``Hodge realization" $ \T_\HH$. In each case we find a section $F:\MT(\cE)\rightarrow \MT(M)$ that identifies $\MT(\cE)$ with an (abelian if $\cE$ has complex multiplication) subgroup of matrices in $\MT(M)$, which leave invariant the ideal of relations $\mathcal I$ and has trivial intersection with the normal subgroup $\UR(\MT(M))$. By Mostow's Theorem (see \cite[Theorem page 200]{Mostow56}) the Mumford-Tate group $\MT(M)$ is isomorphic to the (inner) semi-direct product $F(\MT(\cE)) \ltimes \UR(\MT(M))$:
 \begin{equation}\label{semi-direct}
 	\begin{matrix}
 		F(\MT(\cE)) \ltimes \UR(\MT(M)) &\longrightarrow & \MT(M)\hfill\\
 		\hfill \big(F(a) , \rho(\mathrm{Id}_2,u,u^*,\sigma)\big) &\longmapsto &F(a)\rho(\mathrm{Id}_2,u,u^*,\sigma).
 	\end{matrix}
 \end{equation}
 Observe that the product $	(F(a),\rho(\mathrm{Id}_2,u,u^*,\sigma))\cdot (F(b),\rho(\mathrm{Id}_2,v,v^*,\delta))$, which is defined as 
  \[ \big( F(ab)\, ,\, F(b)^{-1}\rho(\mathrm{Id}_2,u,u^*,\sigma)F(b) \rho(\mathrm{Id}_2,v,v^*,\delta) \big),
 \]
 corresponds in each case to the group law \eqref{eq:grouplaw} of $\MT(M)$, and so the arrow \eqref{semi-direct} is an homomorphism.

  In order to write explicitly in each cases below this decomposition \eqref{semi-direct} of $\MT(M),$ we compute the subgroup of $\mathrm{Gl}_4(\oQQ)$ which leaves invariant the ideal of algebraic relations between periods and its unipotent radical. By Mostow's Theorem we know that there exists a (unique up to conjugation by an element of the unipotent radical) reductive subgroup $F(\MT(\cE))$ such that  \eqref{semi-direct} holds. Hence we have just to check that the group $F(\MT(\cE))$  given in each cases below satisfies the following conditions:
\begin{enumerate}
	\item $F(\MT(\cE)) \cap \UR(\MT(M)) =\{ \mathrm{Id}\} ,$
	\item $F(\MT(\cE))$ leaves invariant the ideal of relations between periods,
	\item the semi-direct product $	F(\MT(\cE)) \ltimes \UR(\MT(M))$ in (6.4) is the whole $\MT(M).$
	   \end{enumerate}
 \medskip

Set $\alpha=(\alpha_1,\alpha_2):=(p,\zeta(p))\Pi_\cE^{-1}$, $\beta=(\beta_1, \beta_2) := (q,\zeta(q))\Pi_\cE^{-1}$ and $\gamma:= (\log f_q(p) - \ell) /2\mathrm{i}\pi$. These vectors have rational coordinates if and only if respectively $P$, $Q$ and $\pi(\tilde R)$ is torsion.

\counterwithout{subsection}{section}

\setcounter{subsection}{1}
\subsection{\boxed{\dim B=2} \emph{that is $P$ and $Q$ are $\End(\cE) \otimes_\ZZ \QQ$-linearly independent}}\phantom{.}

\par\noindent By Corollary \ref{cor:DimSemi-ellipticCase} \emph{(4)} we have that $\dim B=2$ and $\dim Z(1)= \dim Z'(1)=1$, that is
 $\dim \UR(M)=5$. In particular the unipotent radical of $\Galmot(M)$ does not furnish extra relations between the periods of $M$.
Hence as polynomial relations we have just Legendre relation if $\cE$ has no complex multiplication, or the 3 polynomials stated in \eqref{PolynomialRelationsForEG_m} if $\cE$ has complex multiplication. Observe that the equality $\dim (Z/Z')(1)=0$ implies that $\pi(pr_*\tilde R)$ is necessarily torsion, but this does not bring any new algebraic relation between the periods of $M$ !

\par\noindent A transcendence base of the field $\oQQ(\Pi_M)$ is given by the nine periods
\[\omega_1,\;  \eta_1, \; \omega_2,\; \eta_2,\;  p,\;  \zeta(p), \;q, \; \zeta(q), \;\log f_q(p)-\ell\]
if $\cE $ has no complex multiplication, by the seven periods 
\[\omega_1, \;  \eta_1, \; p, \;  \zeta(p), \; q, \; \zeta(q), \;\log f_q(p)-\ell\]
if $\cE$ has complex multiplication. 
 
 \par\noindent The Mumford-Tate group $\MT(M)$ of $M$ is represented by the matrices as in \eqref{semi-direct} with
\smallskip

 $\left\{\begin{array}{l}
   a \; \mathrm{any}\;  2 \times 2 \; \mathrm{matrix \; in} \; \MT(\cE)(\oQQ) \mathrm{(see \; Example \;\ref{example:GalMotE})}  \;
   \mathrm{and} \;  F(a) \;  \mathrm{as \; below},\\
  u\in\mathrm{Mat}_{1,2}(\oQQ), u^*\in\mathrm{Mat}_{2,1}(\oQQ), \sigma \in \oQQ \; \mathrm{five \; free \; parameters}.\\
 \end{array}\right.$

\par\noindent In \eqref{semi-direct} an element of $\MT(M)$ has the following representation
\begin{equation*}
	\begin{matrix}
	\hfill	F(\MT(\cE)) \ltimes \UR(\MT(M)) &\longrightarrow & \MT(M)\hfill\\[2mm]
		\hfill \begin{pmatrix}
			1 &0 &0\\
			0 &a &0\\
			0 &0 &\det(a)
		\end{pmatrix}
		\begin{pmatrix}
			1 &u & \sigma \\
			0 &\mathrm{Id}_2 &u^*\\
			0 &0 &1
		\end{pmatrix} &\longmapsto &\rho(a,u,au^*,\sigma).
	\end{matrix}
\end{equation*}

\smallskip

The cases of elliptic curves with or without complex multiplication are very similar (see \cite[Table 2]{BPSS}). However, if the elliptic curve $\cE$ has complex multiplication the geometry of the 1-motive is richer (see the cases \ref{anti}). Hence,
{\bf from now on we restrict to the case with complex multiplication}. Recall that by default we then have the three polynomial relations \eqref{PolynomialRelationsForEG_m} given by the reductive part of $\Galmot(M)$.	
\medskip  

\setcounter{subsection}{0}
\subsection{\boxed{\dim B=1}  \emph{that is $P$ and $Q$ are $\End(\cE) \otimes_\ZZ \QQ$-linearly dependent, but not both torsion}}\phantom{.}

\par\noindent We distinguish seven subcases. 

\subsubsection{\bf $P$ and $Q$ are $\End(\cE) \otimes_\ZZ \QQ$-linearly dependent, but none is torsion}\phantom{.}

\par\noindent It exists an isogeny $\varphi = \varphi_1 +  \varphi_2 \tau  \in \End(\cE) \otimes_\ZZ \QQ =  \QQ \oplus \tau \QQ$ such that $\varphi(p)=q$ with $\phi_H(q)$ a logarithm of $Q$. By \cite[formula (37) at page 37]{M75} this action is represented in coordinates $(z, \zeta(z))$ through multiplication on the right by the matrix $\Phi = \left(\begin{matrix} \varphi& - \frac{\kappa}{\tau} \varphi_2  \\[1mm]  0 & \overline\varphi \end{matrix} \right)$, so that $\beta = \alpha\Pi_{\cE}\Phi\Pi_\cE^{-1}$. If $\big(\omega(u), \phi_H (\omega(u^*))\big)$ and $\big(\omega(v), \phi_H (\omega(v^*))\big)$ are two points of $\Lie B_\CC$, we have that  $\omega(u^*) = \varphi (\omega(u))$ and 
$\omega(v^*) = \varphi (\omega(v))$ and hence by \eqref{[]casE} we get 
\[ \big[\big(\omega(u), \phi_H (\omega(u^*))\big),\big(\omega(v), \phi_H (\omega(v^*))\big) \big] = e^{ 2{\rm i} \pi \frac{ \mathrm{Im} ( \varphi\omega(v) \overline{\omega(u)} + \omega(v) \overline{ \varphi \omega(u)} )}{\mathrm{Im}(\omega_1 \overline{\omega}_2)}  } 
= e^{ 2{\rm i} \pi (\varphi + \overline{\varphi})  \frac{\mathrm{Im} ( \omega(v) \overline{\omega(u)} )}{\mathrm{Im}(\omega_1 \overline{\omega}_2)}   }.\]
As predicted by Corollary \ref{cor:DimSemi-ellipticCase} \emph{(3)}, these equalities show that the dimension of the smallest torus $ Z'(1),$ which contains the image of the  Lie Bracket $[\cdot, \cdot]: B \otimes B \to \GG_m,$ is 0 or 1 according to the fact that the endomorphism $\varphi$ is antisymmetric (that is $ \varphi + \overline{\varphi} =0$ or equivalently $\Phi+\det(\Phi)\Phi^{-1}=0$) or not.

 We have three subcases:
\paragraph{\underline{$q= \varphi(p)$ with $\varphi$ a non antisymmetric endomorphism}}\phantom{.}

\par\noindent By Corollary \ref{cor:DimSemi-ellipticCase} \emph{(3)(b)} we know that $\dim B=1$ and $ \dim Z(1)=\dim Z'(1)=1$, which implies that $\dim \UR(M)= 3$ and the point
 $ \pi (pr_* \tR)$ defined in \eqref{piR'} is torsion.
 As extra relations between the periods of $M$, the abelian quotient $B$ of the unipotent radical of $\Galmot(M)$ furnishes the $2$ polynomials	
$$ \left[\begin{array}{l}
- \omega_2 (\eta_1q-\omega_1\zeta(q)) + \omega_1(	\eta_2q-\omega_2\zeta(q)) - 2 \mathrm i \pi p \varphi= 0, \\
  - \eta_2 (\eta_1q-\omega_1\zeta(q)) + \eta_1(\eta_2q-\omega_2\zeta(q)) - 2 \mathrm i \pi \left(\zeta(p) \overline \varphi- p\frac{\kappa}{\tau}\varphi_2 \right)  \in \oQQ.
\end{array}\right.$$
Observe that these two equalities express the identity $(q, \zeta(q)) = ( p, \zeta(p))    \Phi $ as relations between periods.

\par\noindent The rank of $\mathcal I$ is 5 and a base of the field $\oQQ(\Pi_M)$ is given by the five periods
\[\omega_1, \; \eta_1, \;  p, \;  \zeta(p), \; \log f_q(p)-\ell.\]

\par\noindent The Mumford-Tate group $\MT(M)$ of $M$ is represented by the matrices as in \eqref{semi-direct} with the relation $u= (Ja^{-1}u^*)^t \Pi_\cE \Phi^{-1} \Pi_\cE^{-1}.$ Setting $\lambda= u \Pi_\cE \Phi = (a^{-1}u^*)^t J^t \Pi_\cE,$ we have
\smallskip

$\left\{\begin{array}{l}
	a \; \mathrm{any}\;  2 \times 2 \; \mathrm{matrix \; in} \; \MT(\cE) (\oQQ)  \;
	\mathrm{and} \;  F(a) \;  \mathrm{as \; below},\\
 u= 	\lambda   (\Pi_\cE \Phi)^{-1},\\
 u^*=  (\lambda \Pi_\cE^{-1} J)^{t},\\
	 \lambda \in \mathrm{Mat}_{1,2}(\oQQ) , \sigma  \in \oQQ \; \mathrm{three} \; \mathrm{free \; parameters}.\\
\end{array}\right.$

\par\noindent In \eqref{semi-direct} an element of $\MT(M)$ has the following representation
\begin{equation*}
	\begin{matrix}
		\hfill	F(\MT(\cE)) \ltimes \UR(\MT(M)) &\longrightarrow & \MT(M)\hfill\\[2mm]
		\hfill \begin{pmatrix}
			1 &0 &0\\
			0 &a &0\\
			0 &0 &\det(a)
		\end{pmatrix}
		\begin{pmatrix}
			1 &	\lambda  (\Pi_\cE \Phi)^{-1} & \sigma \\
			0 &\mathrm{Id}_2 &  (\lambda \Pi_\cE^{-1} J)^{t}  \\
			0 &0 &1
		\end{pmatrix} &\longmapsto &\rho(a,\lambda  (\Pi_\cE \Phi)^{-1}, a (\lambda \Pi_\cE^{-1} J)^{t},\sigma).
	\end{matrix}
\end{equation*}

\medskip

\paragraph{\underline{$q= \varphi(p)$ with $\varphi$ an antisymmetric endomorphism}}\phantom{.}\label{anti}
 
\par\noindent Recall that this case occurs \textbf{only if $\cE$ has complex multiplication.} By Corollary \ref{cor:DimSemi-ellipticCase} \emph{(3)(a)}, $\dim B=1$ and $\dim Z'(1)=0$. According to Lemma \ref{lemma:pr*i*d*B-trivial} the torsor $i^*d^* \mathcal{B}$ is trivial, that is $i^*d^* \mathcal{B}=B \times \GG_m$, and so by \eqref{expG} $\pi (pr_* \tR) =\pi( \tilde R)= e^\ell.$ We have two subcases:
	\medskip

\subparagraph{\underline{\emph{$\pi(\tilde R)$ isn't a torsion point}}}\phantom{.}
 
\par\noindent Then $\dim Z(1) =\dim (Z/Z')(1)=1$ and  $\dim \UR(M)= 3$. As in the previous case, the abelian quotient $B$ of the unipotent radical of $\Galmot(M)$ furnishes the $2$ polynomial relations	
$$ \left[\begin{array}{l}
- \omega_2 (\eta_1q-\omega_1\zeta(q)) + \omega_1(	\eta_2q-\omega_2\zeta(q)) - 2 \mathrm i \pi p \varphi= 0, \\
- \eta_2 (\eta_1q-\omega_1\zeta(q)) + \eta_1(\eta_2q-\omega_2\zeta(q)) - 2 \mathrm i \pi \left(\zeta(p) \overline \varphi- p\frac{\kappa}{\tau}\varphi_2 \right)  \in \oQQ.
\end{array}\right.$$
As observed in the previous case, these two equalities express the identity $(q, \zeta(q)) = ( p, \zeta(p))    \Phi $ as relations between periods.

\par\noindent The rank of $\mathcal I$ is 5 and a base of the field $\oQQ(\Pi_M)$ is given by the five periods
\[\omega_1, \; \eta_1, \;  p, \;  \zeta(p), \; \log f_q(p)-\ell.\]

\par\noindent The Mumford-Tate group $\MT(M)$ of $M$ is represented by the matrices as in \eqref{semi-direct} with the relation  $u= (Ja^{-1}u^*)^t \Pi_\cE \Phi^{-1} \Pi_\cE^{-1}.$ Setting $\lambda= u \Pi_\cE \Phi = (a^{-1}u^*)^t J^t \Pi_\cE,$ we have 
\smallskip

$\left\{\begin{array}{l}
	a \; \mathrm{any}\;  2 \times 2 \; \mathrm{matrix \; in} \; \MT(\cE)(\oQQ)  \;
	\mathrm{and} \;  F(a) \;  \mathrm{as \; below},\\
	u= 	\lambda  (\Pi_\cE \Phi)^{-1},\\
	u^*=  (\lambda \Pi_\cE^{-1} J)^{t},\\
 \lambda \in \mathrm{Mat}_{1,2}(\oQQ) , \sigma  \in \oQQ \; \mathrm{three} \; \mathrm{free \; parameters}.\\
\end{array}\right.$

\par\noindent In \eqref{semi-direct} an element of $\MT(M)$ has the following representation
\begin{equation*}
	\begin{matrix}
		\hfill	F(\MT(\cE)) \ltimes \UR(\MT(M)) &\longrightarrow & \MT(M)\hfill\\[2mm]
		\hfill \begin{pmatrix}
			1 &0 &0\\
			0 &a &0\\
			0 &0 &\det(a)
		\end{pmatrix}
		\begin{pmatrix}
			1 &	\lambda  (\Pi_\cE \Phi)^{-1} & \sigma \\
			0 &\mathrm{Id}_2 &  (\lambda \Pi_\cE^{-1} J)^{t}\\
			0 &0 &1
		\end{pmatrix} &\longmapsto &\rho(a,\lambda  (\Pi_\cE \Phi)^{-1}, a (\lambda \Pi_\cE^{-1} J)^{t},\sigma).
	\end{matrix}
\end{equation*}

 \medskip
	
\subparagraph{\underline{\emph{$\pi( \tilde R)$ is a torsion point}}}\phantom{.}

\par\noindent Then $\dim Z(1) =\dim (Z/Z')(1)=0$ and $\dim \UR(M)= 2$. Both the abelian quotient $B$ and the toric part $Z(1)$ of the unipotent radical of $\Galmot(M)$ furnishes the $3$ polynomials	
$$ \left[\begin{array}{l}
- \omega_2 (\eta_1q-\omega_1\zeta(q)) + \omega_1(	\eta_2q-\omega_2\zeta(q)) - 2 \mathrm i \pi p \varphi= 0, \\
- \eta_2 (\eta_1q-\omega_1\zeta(q)) + \eta_1(\eta_2q-\omega_2\zeta(q)) - 2 \mathrm i \pi \left(\zeta(p) \overline \varphi- p\frac{\kappa}{\tau}\varphi_2 \right)  \in \oQQ, \\
	\log f_q(p) - \ell -2i\pi \gamma =0.
\end{array}\right.$$
As before, the first two equalities express the identity $(q, \zeta(q)) = ( p, \zeta(p))    \Phi $
and the last equation expresses that $\pi( \tilde R)$ is a torsion point in $\GG_m$, as relations between periods.
By \eqref{eq:grouplaw}, applying the action $\rho(\mathrm{Id}_2,v,v^*,\tau) \rho(a,u,u^*,\sigma)$ on the last polynomial, we observe that $uv^*$ must be zero. This property appears each time we have this last polynomial independently of its geometrical origin. 

\par\noindent  The rank of $\mathcal I$ is 6 and a base of the field $\oQQ(\Pi_M)$ is given by the four periods
\[\omega_1, \; \eta_1,\;  p, \; \zeta(p).\]

\par\noindent The Mumford-Tate group $\MT(M)$ of $M$ is represented by the matrices as in \eqref{semi-direct} with the relation  $u= (Ja^{-1}u^*)^t \Pi_\cE \Phi^{-1} \Pi_\cE^{-1}.$ Setting $\lambda= u \Pi_\cE \Phi = (a^{-1}u^*)^t J^t \Pi_\cE,$ we have 

$\left\{\begin{array}{l}
	a \; \mathrm{any}\;  2 \times 2 \; \mathrm{matrix \; in} \; \MT(\cE)(\oQQ) \;
	\mathrm{and} \;  F(a) \;  \mathrm{as \; below},\\
		u= 	\lambda  (\Pi_\cE \Phi)^{-1},\\
	u^*=  (\lambda \Pi_\cE^{-1} J)^{t},\\
	\sigma =  \frac{1}{4 \mathrm{i} \pi}  \lambda \Phi^{-1} J^t \lambda^t ,\\
	\lambda \in \mathrm{Mat}_{1,2}(\oQQ)  \; \mathrm{two} \; \mathrm{free \; parameters}.\\
\end{array}\right.$

\par\noindent An element of the semi-direct product $F(\MT(\cE)) \ltimes \UR(\MT(M))$ has the following representation 
\begin{equation*}
 \begin{pmatrix}
			1 &0 &(\det(a)-1)\gamma\\
			0 &a &0\\
			0 &0 &\det(a)
		\end{pmatrix}
		\begin{pmatrix}
			1 &	\lambda  (\Pi_\cE \Phi)^{-1} &  \frac{1}{4 \mathrm{i} \pi} \lambda \Phi^{-1} J^t \lambda^t\\
			0 &\mathrm{Id}_2 &   (\lambda \Pi_\cE^{-1} J)^{t}\\
			0 &0 &1
		\end{pmatrix}
\end{equation*}
and its image in $\MT(M)$ via \eqref{semi-direct} is the element 
\[\rho\big(a,	\lambda  (\Pi_\cE \Phi)^{-1}, a   (\lambda \Pi_\cE^{-1} J)^{t} ,  \frac{1}{4 \mathrm{i} \pi}\lambda \Phi^{-1} J^t \lambda^t + (\det(a)-1)\gamma \big).\]

\medskip

\subsubsection{\bf{$P$ is a torsion point, but not $Q$}}\phantom{.}

\par\noindent According to Corollary \ref{cor:DimSemi-ellipticCase} \emph{(2)}, $\dim B=1$ and $\dim Z'(1)= 0.$ By Lemma \ref{lemma:pr*i*d*B-trivial} the torsor $i^*d^* \mathcal{B}$ is trivial, that is $i^*d^* \mathcal{B}=B \times \GG_m,$ and so $\pi (pr_* \tR) =\pi( \tilde R)= e^\ell.$ We have two subcases:
	\medskip
	
\paragraph{\underline{\emph{$\pi( \tilde R)$ is not a torsion point}}}\phantom{.} 

\par\noindent Then $\dim Z(1)=\dim (Z/Z')(1)=1$, that is $ \dim \UR(M)= 3$. The abelian quotient $B$ of the unipotent radical of $\Galmot(M)$ furnishes the $2$ polynomial relations 	
$$ \left[\begin{array}{l}
 p-\alpha_1\omega_1-\alpha_2\omega_2=0,  \\
\zeta(p)-\alpha_1\eta_1-\alpha_2\eta_2   \in \oQQ. 
\end{array}\right.$$
These equations express that $P$ is a torsion point of the elliptic curve $\cE,$ as relations between periods.

\par\noindent  The rank of $\mathcal I$ is 5 and a base of the field $\oQQ(\Pi_M)$ is given by the five periods
\[\omega_1,\;  \eta_1, \;  q, \;  \zeta(q), \; \log f_q(p)-\ell.\]

\par\noindent	The Mumford-Tate group $\MT(M)$ of $M$ is represented by the matrices as in \eqref{semi-direct} with
\smallskip

$\left\{\begin{array}{l}
	a \; \mathrm{any}\;  2 \times 2 \; \mathrm{matrix \; in} \; \MT(\cE)(\oQQ) \;
	\mathrm{and} \;  F(a) \;  \mathrm{as \; below},\\
	u=0, \\
u^* \in  \mathrm{Mat}_{2,1}(\oQQ) , \sigma \in \oQQ \; \mathrm{three \; free \; parameters}.
\end{array}\right.$

 \par\noindent  In \eqref{semi-direct} an element of $\MT(M)$ has the following representation
 \begin{equation*}
 	\begin{matrix}
 		\hfill	F(\MT(\cE)) \ltimes \UR(\MT(M)) &\longrightarrow & \MT(M)\hfill\\[2mm]
 		\hfill \begin{pmatrix}
 			1 &\alpha(a-\mathrm{Id}_2) &0\\
 			0 &a &0\\
 			0 &0 &\det(a)
 		\end{pmatrix}
 		\begin{pmatrix}
 			1 &0 & \sigma \\
 			0 &\mathrm{Id}_2 &u^*\\
 			0 &0 &1
 		\end{pmatrix} &\longmapsto &\rho(a,\alpha(a-\mathrm{Id}_2), a u^*,\sigma + \alpha(a-\mathrm{Id}_2) u^*).
 	\end{matrix}
 \end{equation*}

 \medskip
 
 \paragraph{\underline{\emph{$\pi(\tilde R)$ is a torsion point}}}\phantom{.}
  
\par\noindent Then $\dim Z(1)=\dim (Z/Z')(1)=0$, that is $\dim \UR(M)= 2$. Both the abelian quotient $B$ and the toric part $Z(1)$ of the unipotent radical of $\Galmot(M)$ furnishes the $3$ polynomials 
  $$ \left[\begin{array}{l}
    	 p-\alpha_1\omega_1-\alpha_2\omega_2=0,  \\
    	 \zeta(p)-\alpha_1\eta_1-\alpha_2\eta_2   \in \oQQ, \\
    	 	\log f_q(p) - \ell -2i\pi \gamma =0.
 \end{array}\right.$$
The first two equations express that $P$ is torsion in $\cE$ and the last one that $\pi(\tilde R)$ is torsion in $\GG_m$, as relations between periods.

\par\noindent  The rank of $\mathcal I$ is 6 and a base of the field $\oQQ(\Pi_M)$ is given by the four periods
\[\omega_1,\;  \eta_1, \;  q, \;  \zeta(q).\]

\par\noindent In this subcase we work modulo isogenies and so we may assume $\alpha =0.$ The Mumford-Tate group $\MT(M)$ 
 is represented by the matrices as in \eqref{semi-direct} with
\smallskip

$\left\{\begin{array}{l}
	a \; \mathrm{any}\;  2 \times 2 \; \mathrm{matrix \; in} \; \MT(\cE) (\oQQ) \;
	\mathrm{and} \;  F(a) \;  \mathrm{as \; below},\\
	u=0 , \\
	\sigma = 0 ,\\
	u^* \in  \mathrm{Mat}_{2,1}(\oQQ) \; \mathrm{two \; free \; parameters}.
\end{array}\right.$

\par\noindent An element of the semi-direct product $F(\MT(\cE)) \ltimes \UR(\MT(M))$ has the following representation 
\begin{equation*}
\begin{pmatrix}
	1 &\alpha(a-\mathrm{Id}_2) & (\det(a)-1) \gamma\\
	0 &a &0\\
	0 &0 &\det(a)
\end{pmatrix}
\begin{pmatrix}
	1 &0 & 0 \\
	0 &\mathrm{Id}_2 &u^*\\
	0 &0 &1
\end{pmatrix}
\end{equation*}
and its image in $\MT(M)$ via \eqref{semi-direct} is the element 
\[\rho(a,\alpha(a-\mathrm{Id}_2), a u^*, \alpha(a-\mathrm{Id}_2) u^* +(\det(a)-1) \gamma).\]
 
\medskip

\subsubsection{\bf{$Q$ is a torsion point, but not $P$}}\phantom{.} 

\par\noindent It is the dual case of the previous one. According to Corollary \ref{cor:DimSemi-ellipticCase} \emph{(2)}, $\dim B=1$ and $\dim Z'(1)= 0$. By Lemma \ref{lemma:pr*i*d*B-trivial} the torsor $i^*d^* \mathcal{B}$ is trivial, that is $i^*d^* \mathcal{B}=B \times \GG_m,$ and so  $\pi (pr_* \tR) =\pi( \tilde R)= e^\ell.$ We have two subcases:

\medskip
\paragraph{\underline{\emph{$\pi( \tilde R)$ is not a torsion point}}}\phantom{.}

\par\noindent Then $\dim Z(1)=\dim (Z/Z')(1)=1$, that is $\dim \UR(M)= 3$. The abelian quotient $B$ of the unipotent radical of $\Galmot(M)$ furnishes the $2$ polynomial relations 	
 $$ \left[\begin{array}{l}
- \omega_2 (\eta_1q-\omega_1\zeta(q)) + \omega_1(	\eta_2q-\omega_2\zeta(q))-2i\pi(\beta_1 \omega_1+\beta_2\omega_2) =0,  \\
- \eta_2 (\eta_1q-\omega_1\zeta(q)) + \eta_1(	\eta_2q-\omega_2\zeta(q))  - 2i\pi(\beta_1\eta_1+\beta_2\eta_2) \in  \oQQ. 
\end{array}\right.$$
These equations express that $Q$ is a torsion point of $\cE$, as relations between periods.

\par\noindent  The rank of $\mathcal I$ is 5 and a base of the field $\oQQ(\Pi_M)$ is given by the five periods
\[\omega_1, \; \eta_1, \; p, \; \zeta(p), \; \log f_q(p)-\ell.\]

\par\noindent The Mumford-Tate group $\MT(M)$ of $M$ is represented by the matrices as in \eqref{semi-direct} with
\smallskip

$\left\{\begin{array}{l}
	a \; \mathrm{any}\;  2 \times 2 \; \mathrm{matrix \; in} \; \MT(\cE)(\oQQ) \;
	\mathrm{and} \;  F(a) \;  \mathrm{as \; below},\\
u^*= 0, \\
u \in   \mathrm{Mat}_{1,2}(\oQQ),   \sigma \in \oQQ  \; \mathrm{three \; free \; parameters}.
\end{array}\right.$

	\par\noindent In \eqref{semi-direct} an element of $\MT(M)$ has the following representation
	\begin{equation*}
		\begin{matrix}
			\hfill	F(\MT(\cE)) \ltimes \UR(\MT(M)) &\longrightarrow & \MT(M)\hfill\\[2mm]
			\hfill\begin{pmatrix}
				1 &0 &0\\
				0 &a &(a-\det(a)\mathrm{Id}_2)J \beta^t\\
				0 &0 &\det(a)
			\end{pmatrix}
			\begin{pmatrix}
				1 &u & \sigma \\
				0 &\mathrm{Id}_2 &0\\
				0 &0 &1
			\end{pmatrix} &\longmapsto &\rho(a,u, (a- \det (a) \mathrm{Id}_2 )J\beta^t, \sigma).
		\end{matrix}
	\end{equation*}
 
 		\medskip
		
\paragraph{\underline{\emph{$\pi( \tilde R)$ is a torsion point}}}\phantom{.}

\par\noindent Then $ \dim Z(1)=\dim (Z/Z')(1)=0$, that is $ \dim \UR(M)= 2$. Both the abelian quotient $B$ and the toric part $Z(1)$ of the  unipotent radical of $\Galmot(M)$ furnishes the $3$ polynomials 
$$ \left[\begin{array}{l}
	- \omega_2 (\eta_1q-\omega_1\zeta(q)) + \omega_1(	\eta_2q-\omega_2\zeta(q))-2i\pi(\beta_1 \omega_1+\beta_2\omega_2) =0,  \\
	- \eta_2 (\eta_1q-\omega_1\zeta(q)) + \eta_1(	\eta_2q-\omega_2\zeta(q))  - 2i\pi(\beta_1\eta_1+\beta_2\eta_2) \in  \oQQ, \\
		\log f_q(p) - \ell -2i\pi \gamma =0.
\end{array}\right.$$
The first two equations express that $Q$ is torsion in $\cE$ and the last one that $\pi(\tilde R)$ is torsion in $\GG_m$, as relations between periods.

\par\noindent  The rank of $\mathcal I$ is 6 and a base of the field $\oQQ(\Pi_M)$ is given by the four periods
\[\omega_1,\;  \eta_1, \;  p, \; \zeta(p).\]

\par\noindent	The Mumford-Tate group $\MT(M)$ of $M$ is represented by the matrices as in \eqref{semi-direct} with
	\smallskip
	
	$\left\{\begin{array}{l}
		a \; \mathrm{any}\;  2 \times 2 \; \mathrm{matrix \; in} \; \MT(\cE)(\oQQ) \; \mathrm{and} \;  F(a) \;  \mathrm{as \; below}, \\
		u^*= 0, \\
	 \sigma =  \beta_2 u_1 - \beta_1 u_2 =u J\beta^t ,\\
		u \in   \mathrm{Mat}_{1,2}(\oQQ) \; \mathrm{two \; free \; parameters}.
	\end{array}\right.$		

\par\noindent An element of the semi-direct product $F(\MT(\cE)) \ltimes \UR(\MT(M))$ has the following representation 
\begin{equation*}
	\begin{pmatrix}
		1 &0 &(\det(a)-1) \gamma\\
		0 &a &(a-\det(a)\mathrm{Id}_2) J\beta^t\\
		0 &0 &\det(a)
	\end{pmatrix}
	\begin{pmatrix}
		1 &u & u J\beta^t \\
		0 &\mathrm{Id}_2 &0\\
		0 &0 &1
	\end{pmatrix}
\end{equation*}
and its image in $\MT(M)$ via \eqref{semi-direct} is the element 
\[\rho\big(a,u,\big(a- \det (a)\mathrm{Id}_2 \big)J\beta^t, u J\beta^t + (\det(a)-1)\gamma \big)  \big).\]

\medskip 

\setcounter{subsection}{-1}
\subsection{\boxed{\dim B=0}  \textit{that is $Q$ and $P$ are both torsion points}.} \phantom{.}

\par\noindent According to Corollary \ref{cor:DimSemi-ellipticCase} \emph{(1)}, $\dim B= \dim Z'(1)=0$. By Lemma \ref{lemma:pr*i*d*B-trivial} the torsor $i^*d^* \mathcal{B}$ is trivial, that is $i^*d^* \mathcal{B}=B \times \GG_m,$ and so $\pi (pr_* \tR) =\pi( \tilde R)= e^\ell.$ We have two subcases:

	\medskip
\subsubsection{\underline{\emph{$\pi(\tilde R)$ is not a torsion point}}}\phantom{.}

\par\noindent Then $\dim (Z/Z')(1)= 1$, that is $\dim \UR(M)= 1.$ The abelian quotient $B$ of the  unipotent radical of $\Galmot(M)$ furnishes the $4$ polynomials 
$$ \left[\begin{array}{l}
	 p-\alpha_1\omega_1-\alpha_2\omega_2=0,  \\
  \zeta(p)-\alpha_1\eta_1-\alpha_2\eta_2  \in  \oQQ, \\
 - \omega_2 (\eta_1q-\omega_1\zeta(q)) + \omega_1(	\eta_2q-\omega_2\zeta(q))-2i\pi(\beta_1 \omega_1+\beta_2\omega_2) =0,\\
 - \eta_2 (\eta_1q-\omega_1\zeta(q)) + \eta_1(	\eta_2q-\omega_2\zeta(q))  - 2i\pi(\beta_1\eta_1+\beta_2\eta_2)  \in \oQQ.
\end{array}\right.$$
These equations express that $P$ and $Q$ are torsion points of $\cE$, as relations between periods.

\par\noindent  The rank of $\mathcal I$ is 7 and a base of the field $\oQQ(\Pi_M)$ is given by the three periods
\[\omega_1, \; \eta_1, \; \log f_q(p)-\ell.\]

\par\noindent The Mumford-Tate group $\MT(M)$ of $M$ is represented by the matrices as in \eqref{semi-direct} with
\smallskip

	$\left\{\begin{array}{l}
	a \; \mathrm{any}\;  2 \times 2 \; \mathrm{matrix \; in} \; \MT(\cE)(\oQQ) \;  \mathrm{and} \;  F(a) \;  \mathrm{as \; below},\\
	u=0,\\
	u^*= 0, \\
	\sigma  \in \oQQ \; \mathrm{a \; free \; parameter \; belonging \; to}\; \mathbb{G}_a(\QQ).
\end{array}\right.$

 \par\noindent An element of the semi-direct product $F(\MT(\cE)) \ltimes \UR(\MT(M))$ has the following representation 
 \begin{equation*}
 	\begin{pmatrix}
 		1 &\alpha(a-\mathrm{Id}_2) &\alpha(a-\mathrm{Id}_2) J\beta^t\\
 		0 &a &(a-\det(a)\mathrm{Id}_2) J\beta^t\\
 		0 &0 &\det(a)
 	\end{pmatrix}
 	\begin{pmatrix}
 		1 &0 & \sigma \\
 		0 &\mathrm{Id}_2 &0\\
 		0 &0 &1
 	\end{pmatrix}
 \end{equation*}
 and its image in $\MT(M)$ via \eqref{semi-direct} is the element 
 \[\rho\big(a,\alpha(a-\mathrm{Id}_2),(a- \det (a)\mathrm{Id}_2 )J\beta^t, \sigma + \alpha(a-\mathrm{Id}_2) J\beta^t  \big).\]

 \medskip

\subsubsection{\underline{\emph{$\pi( \tilde R)$ is a  torsion point}}}\phantom{.}

\par\noindent Then $\dim (Z/Z')(1)= 0$, that is $\dim \UR(M)= 0.$ Both the abelian quotient $B$ and the toric part $Z(1)$ of the unipotent radical of $\Galmot(M)$ furnishes the $5$ polynomial relations 
	$$ \left[\begin{array}{l}
			 p-\alpha_1\omega_1-\alpha_2\omega_2=0,  \\
		 \zeta(p)-\alpha_1\eta_1-\alpha_2\eta_2  \in \oQQ, \\
		- \omega_2 (\eta_1q-\omega_1\zeta(q)) + \omega_1(	\eta_2q-\omega_2\zeta(q))-2i\pi(\beta_1 \omega_1+\beta_2\omega_2) =0,\\
		- \eta_2 (\eta_1q-\omega_1\zeta(q)) + \eta_1(	\eta_2q-\omega_2\zeta(q))  - 2i\pi(\beta_1\eta_1+\beta_2\eta_2)   \in \oQQ, \\
			\log f_q(p) - \ell -2i\pi \gamma =0. 
	\end{array}\right.$$
The first four equations express that $P$ and $Q$ are torsion in $\cE$ and the last one that $\pi(\tilde R)$ is torsion in $\GG_m$, as relations between periods.

 \par\noindent  The rank of $\mathcal I$ is 8 and a base of the field $\oQQ(\Pi_M)$ is given by the two periods
 \[\omega_1, \;  \eta_1.\]
 
 \par\noindent The Mumford-Tate group $\MT(M)$ of $M$ is represented by the matrices as in \eqref{semi-direct} with
\smallskip

$\left\{\begin{array}{l}
	a \; \mathrm{any}\;  2 \times 2 \; \mathrm{matrix \; in} \; \MT(\cE)(\oQQ) \; \mathrm{and} \;  F(a) \;  \mathrm{as \; below},\\
	u=0,\\
	u^*= 0, \\
	\sigma  =0.
\end{array}\right.$

\par\noindent In this last case the unipotent radical $\UR(\MT(M))$ consists just of the identity matrix. An element of $F(\MT(\cE))$ has the following representation
\begin{equation*}
		\begin{pmatrix}
			1 &\alpha(a-\mathrm{Id}_2) &(\det(a)-1) \gamma +\alpha(a-\mathrm{Id}_2) J\beta^t\\
			0 &a &(a-\det(a)\mathrm{Id}_2) J\beta^t\\
			0 &0 &\det(a)
		\end{pmatrix}
\end{equation*}
and its image in $\MT(M)$ via \eqref{semi-direct} is the element 
\[\rho\big(a,\alpha(a-\mathrm{Id}_2),(a- \det (a)\mathrm{Id}_2)J\beta^t,(\det(a)-1) \gamma +\alpha(a-\mathrm{Id}_2) J\beta^t  \big).\]

\begin{remark}
	 If $n$ and/or $s$ are bigger than 1, one can use
	\cite[Lemma 2.2, see also Remark 2.4]{B19} in order to find all polynomial relations between the periods of $M.$
\end{remark}

\section*{Erratum}

In the earlier papers \cite{BPSS} and \cite{PSS}, the proofs of the main theorems (Theorem 1 and Theorem 1.6 respectively) require a correction. In place of the element $\ell \in \mathcal L_n \setminus \overline{\mathbb Q}$ which is algebraic over $\mathcal E_m$, we will rather get elements $\ell_1, \ldots, \ell_h \in \mathcal L_n$ which are algebraically independent over $\overline{\mathbb Q}$ but algebraically dependent over $\mathcal E_m$. The desired correction and the subsequent changes have been made in the latest versions on arXiv of the respective articles arXiv:2010.15170v2 and arXiv:1811.05167v1.


\bibliographystyle{plain}

\end{document}